\documentclass[a4paper,oneside,onecolumn]{article}
\usepackage{cancel}
 
\usepackage{soul}
\usepackage[left=2.0cm,top=2.5cm,right=2.0cm,bottom=2.5cm,bindingoffset=0.0cm]{geometry}

\usepackage{float} % For the table positioning
\usepackage{multirow}
\usepackage[T1]{fontenc} % Use 8-bit encoding that has 256 glyphs

\usepackage[utf8]{inputenc} % Required for including letters with accents
\newcommand{\vertiii}[1]{{\left\vert\kern-0.25ex\left\vert\kern-0.25ex\left\vert #1 
    \right\vert\kern-0.25ex\right\vert\kern-0.25ex\right\vert}}
\usepackage{dsfont}
\usepackage{graphics,graphicx,epstopdf} % Required for including images

\usepackage{enumitem} % Required for manipulating the whitespace between and within lists
\usepackage{mathrsfs}
\usepackage{mathtools} % For \coloneqq and other math symbols

\usepackage{subfig} % Required for creating figures with multiple parts (subfigures)

\usepackage{setspace} % Ensure this package is included
\usepackage{amsmath,amssymb,amsthm,amsfonts} % For including math equations, theorems, symbols, etc
\allowdisplaybreaks

\usepackage{lineno}                     % Adds consecutive line numbers to the document margins
\usepackage{hyperref}                   % Enables standard clickable hyperlinks in the PDF

\usepackage{color, xcolor}

\usepackage{authblk} % For including multiple authors
\usepackage{amsmath}              % For advanced math environments
\usepackage{amsfonts}             % For math fonts
\usepackage{amssymb}              % For math symbols
\usepackage{booktabs}             % For professional-looking tables

\usepackage{float}

\usepackage{caption}              % For table and figure captions

\theoremstyle{plain} % Define theorem styles here based on the plain style (used for theorems, lemmas, propositions
\newtheorem{theorem}{Theorem}[section]
\newtheorem{lemma}{Lemma}[section]

\newtheorem{corollary}{Corollary}[section]
\newtheorem{assumption}{Assumption}[section]

\newtheorem{example}{Example}[section]
\theoremstyle{remark} % Define theorem styles here based on the remark style (used for remarks and notes)
\newtheorem{remark}{Remark}[section]

\definecolor{c1}{rgb}{0,0,1} % blue
\definecolor{c2}{rgb}{0,0.3,1} % light blue
\definecolor{c3}{rgb}{0.5,0,0.5} % Deep purple/red for external links

\hypersetup{
	pdfauthor={Lok Pati},
	pdfsubject={},
	pdftitle={},
	pdfkeywords={},
	breaklinks = true, % will make the text rather than the page numbers of the ToC the active link (it also break line in toc)
	linktocpage=true, % will make the page numbers rather than the text of the ToC the active link (it also break line in toc)
	colorlinks=true,       % false: boxed links; true: colored links
	linkcolor={c1},          % color of internal links
	citecolor={c2},        % color of links to bibliography
	filecolor=black,      % color of file links
	urlcolor={c3},       % external links/urls
}
\numberwithin{equation}{section}

\allowdisplaybreaks
\makeatletter
\def\namedlabel#1#2{\begingroup
    #2%
    \def\@currentlabel{#2}%
    \phantomsection\label{#1}\endgroup
}
\makeatother
\begin{document}
%----------------------------------------------------------------------------------------
%	TITLE SECTION
%----------------------------------------------------------------------------------------

\title{\textbf{Optimal Error Estimates of a Finite Element Method for Semilinear SPDEs with Additive Noise and Nonsmooth Initial Data}}

\author[1]{\textsc{Jitendra Nath Naik}\thanks{\href{mailto:jitendra20232201@iitgoa.ac.in}{jitendra20232201@iitgoa.ac.in}}}
\author[1]{\textsc{Lok Pati Tripathi}\thanks{\href{mailto:lokpati@iitgoa.ac.in}{lokpati@iitgoa.ac.in}}}
\affil[1]{School of Mathematics and Computer Science, Indian Institute of Technology Goa, Goa 403401, India.}

\date{} % Leave empty to omit a date

\maketitle % Print the title

%----------------------------------------------------------------------------------------
%	ARTICLE SECTION
%----------------------------------------------------------------------------------------
\begin{abstract}
This paper presents a strong error analysis of both semidiscrete and fully discrete approximations for semilinear parabolic stochastic partial differential equations (SPDEs) driven by additive noise and subject to nonsmooth initial data. The spatial discretization is based on a standard finite element method, coupled with the linearly implicit Euler scheme in time. Under low-regularity initial conditions, we derive sharp spatial and temporal regularity estimates that isolate the loss of initial regularity into an integrable temporal singularity, allowing us to establish optimal strong error estimates for positive times. Specifically, we prove strong convergence rates of order $O(h^\beta)$ for the spatially semidiscrete approximation and $O(h^\beta + k^{\beta/2})$ for the fully discrete scheme away from $t = 0$, where the parameter $\beta \in (0, 2]$ characterizes the spatial regularity of the noise process. Numerical experiments confirm the theoretical convergence rates.
\end{abstract}

\noindent\textbf{Keywords:} Semilinear stochastic partial differential equations; additive noise; finite element method; linearly implicit Euler scheme; nonsmooth initial data; optimal error estimates

\vspace{1em}
\noindent
\textbf{AMS Subject Classification (2020):} 60H15, 60H35, 65C30, 65M60.

% 60H35, 65M15, 65M60

\vspace{2em} % Adds some space before the table of contents
\tableofcontents % <--- ADD THIS COMMAND HERE
\vspace{2em} % Adds some space after the table of contents

%%%%%%%%%%%%%%%%%%%%%%%%%%%%%%%%%%%%%%%%%%%%%%
\section{Introduction}\label{sec:introduction}
%%%%%%%%%%%%%%%%%%%%%%%%%%%%%%%%%%%%%%%%%%%%%%
Let $(H, (\cdot,\cdot)_H, \| \cdot \|_H)$ be a separable Hilbert space, and let $A \colon \mathcal{D}(A) \subset H \to H$ be a densely defined, self-adjoint, and positive definite linear operator with a compact inverse. Consequently, $-A$ generates an analytic $\mathcal{C}_0$-semigroup of contractions $\{S(t)\}_{t\ge 0}$ on $H$ given by $S(t) = e^{-tA}$. Applying the spectral theorem to $A^{-1}$ yields an orthonormal basis of eigenvectors for $A$, which allows us to define the fractional powers $A^\gamma$ for $\gamma \in \mathbb{R}$ (see Kruse~\cite[Appendix~B.2]{MR3154916}). The associated fractional spaces are defined as $\dot{H}^\gamma \coloneqq \mathcal{D}(A^{\gamma/2})$, equipped with the norm $\|u\|_{\dot{H}^\gamma} \coloneqq \|A^{\gamma/2}u\|_H$.

Let $(\Omega, \mathcal{F}, \mathbb{P})$ be a probability space equipped with a normal filtration $\{\mathcal{F}_t\}_{t \in [0,T]}$, and let $\{W(t)\}_{t \in [0,T]}$ be a $Q$-Wiener process (possibly cylindrical) on another separable Hilbert space $(U, (\cdot, \cdot)_U)$ with respect to $\{\mathcal{F}_t\}_{t \in [0,T]}$, where the covariance operator $Q \colon U \to U$ is linear, bounded, self-adjoint, and positive semidefinite. In this setting, we study the following abstract formulation of a semilinear parabolic SPDE for an $H$-valued stochastic process $\{X(t)\}_{t \in [0, T]}$:
\begin{equation}\label{eq:SEE}
\begin{aligned}
    dX(t) + A X(t) \, dt &= F(t, X(t)) \, dt + G(t) \, dW(t), \quad t \in (0, T], \\
    X(0) &= X_0.
\end{aligned}
\end{equation}
Here, the initial data $X_0$ is an $H$-valued, $\mathcal{F}_0$-measurable random variable. The mappings $F$ and $G$ represent the nonlinear drift and the diffusion coefficient, respectively. The precise assumptions are detailed in Section~\ref{sec:assumptions}.

Under these assumptions, the existence, uniqueness, and regularity of an $H$-valued mild solution to \eqref{eq:SEE} are well established (see, e.g., the standard monographs \cite{MR2329435, MR2856611, MR3154916, MR3236753, MR3308418, MR3410409}). This solution $X \colon [0,T] \times \Omega \to H$ is defined by the following stochastic integral equation:
\begin{equation}\label{eq:mild_solution_SEE}
    X(t) = S(t)X_{0} + \int_{0}^{t} S(t-s)F(s,X(s))\,ds + \int_{0}^{t} S(t-s)G(s)\,dW(s).
\end{equation}
%%%%%%%%%%%%%

While the general well-posedness theory is standard, the error analysis of numerical approximations inherently relies on precise spatial and temporal regularity results (for a comprehensive background on SPDE regularity, see, e.g., Jentzen and R\"ockner~\cite{MR2852200} and Kruse and Larsson~\cite{MR2968672}). To this end, the well-posedness and base regularity of the mild solution to \eqref{eq:SEE} are first established in Section~\ref{sec:assumptions} (see Theorem~\ref{thm:base_regularity}), utilizing the generalized analytical framework we developed in~\cite{NaikTripathi2026}. Building upon this foundation, the advanced spatial and temporal regularity estimates required for our subsequent error analysis in the presence of nonsmooth initial data are derived in Section~\ref{sec:regularity_results} (see Theorem~\ref{thm:singular_regularity}).

As exact analytical solutions for these SPDEs are rarely available, reliable numerical approximations are essential. Time-stepping schemes, when combined with spatial discretizations such as Galerkin finite element methods (FEM) or spectral Galerkin methods, have been extensively studied. Early error analyses for these schemes were established in \cite{MR1683281, MR1873517, MR1953619, MR2182132}. To overcome convergence order barriers imposed by the noise, subsequent research developed exponential integrators and higher-order Taylor approximations (see, e.g., \cite{MR2471778, MR2792389, MR2728063, MR3327065}). Furthermore, the derivation of spatial and fully discrete strong error estimates for FEM-based approximations has been investigated in \cite{MR2600932, MR3047942, MR3168284, MR3274889, MR3788676, MR3872387, MR3977202}, with recent advancements by Lord and Petersson~\cite{MR4876549} addressing specific challenges of noise discretization within these modern FEM frameworks. For a comprehensive overview, we refer to Jentzen and Kloeden~\cite[Section~3.2]{MR2578878} and Lord et al.~\cite[Section~10.9]{MR3308418}.

% In addition, recent investigations have extended these finite element error analyses to stochastic space-time fractional subdiffusion problems driven by fractionally integrated additive noise (see, e.g., Kang et al.~\cite{MR4454923}).

The literature on the spatial semidiscretization of SPDEs extensively analyzes strong error estimates for both smooth and nonsmooth initial data. However, in these standard works, the theoretical order of convergence is dictated by the spatial regularity of the initial condition, meaning optimal rates are only achieved when the initial state is sufficiently smooth. For instance, in the specific context of additive noise, Yan~\cite{MR2211047} proved an $O(h^{\beta})$ convergence rate under initial data $X_0 \in L^2(\Omega; \dot{H}^{\beta})$ with $\beta \in [0,1]$. Subsequently, Kruse~\cite[Section~7]{MR3168284} extended these spatial error estimates to initial states $X_0 \in L^p(\Omega; \dot{H}^{\beta})$ for $\beta \in [1,2]$. Building upon these frameworks, Tambue and Mukam~\cite{MR3872387} established optimal strong spatial convergence rates, provided that both the initial data and the noise process possess sufficient spatial regularity. 

Despite the extensive developments in fully discrete schemes (see, e.g., \cite{MR2182132, MR2600932, MR3168284, MR3327065, MR3872387}), a persistent challenge in the numerical analysis of SPDEs has been overcoming the standard temporal convergence barrier. Historically, strong temporal rates were restricted to $O(k^{1/2})$ due to the inherent $1/2$-H\"older continuity of the solution. To achieve higher-order convergence, Kruse~\cite{MR3274889} studied a Milstein--Galerkin finite element scheme, which reduces to the linearly implicit Euler method for additive noise, and obtained a temporal convergence rate of $O(k^{\frac{1+r}{2}})$ for a parameter $r \in [0, 1)$ characterizing the spatial regularity of the initial state and noise. Of particular relevance to the present study, Wang~\cite{MR3649432} adapted the Taylor expansion technique of Kloeden et al.~\cite{MR2728063} to the standard FEM and linearly implicit Euler scheme, establishing the sharp temporal rate $O(k^{\beta/2})$. However, the optimal order is achieved for $\beta=2$, which requires the initial data to satisfy $X_0 \in L^p(\Omega;\dot{H}^2)$. 

In many physical scenarios, however, the initial state is inherently nonsmooth, which in classical error analysis limits the provable spatial and temporal convergence rates. Although deterministic finite element methods overcome this theoretical order reduction by exploiting the parabolic smoothing properties of the analytic semigroup (see, e.g., Thom\'ee~\cite{MR2249024}), achieving this same recovery of optimal rates for SPDEs remains a significant analytical challenge. In this article, we address this gap by establishing optimal spatial and temporal convergence rates for semilinear SPDEs subject to nonsmooth initial data. The key idea of our analysis is the decoupling of the regularity parameter $\beta$ of the driving noise from the regularity parameter $\mu$ of the initial state. By
removing the standard structural constraint $\beta = \mu$ prevalent in the existing literature \cite{MR2182132, MR3649432, MR3872387}, we isolate the exact loss of initial regularity into an integrable temporal weight that blows up as $t \to 0$. This allows us to recover optimal convergence rates away from $t=0$. 

The main contributions of this work, established under nonsmooth initial conditions $X_0 \in L^p(\Omega;\dot{H}^{\mu})$ for $\mu \in (0,2]$, are summarized as follows:
\begin{itemize}
    \item We establish sharp spatial and temporal regularity estimates for the mild solution (see Theorem~\ref{thm:singular_regularity}).

    \item For the spatial semidiscretization, we establish a sharp strong convergence rate of $O(h^\beta)$ for any fixed $t > 0$ (see Theorem~\ref{thm:strong_convergence_of_semi_discrete_scheme}), thereby relaxing the initial data regularity assumptions required in \cite{MR2211047, MR3168284, MR3872387}.

    \item For the fully discrete scheme employing the linearly implicit Euler method, we establish a sharp strong convergence rate of $O(h^\beta + k^{\beta/2})$ for any fixed $t > 0$ using bivariate Taylor expansions, extending the work of Wang~\cite{MR3649432} to nonsmooth initial states (see Theorem~\ref{thm:strong_convergence_of_fully_discrete_scheme}).

    \item In particular, for $\beta=2$ (trace-class noise), we recover the optimal convergence rates $O(h^2)$ and $O(h^2 + k)$ without assuming $X_0\in L^p(\Omega;\dot{H}^{2})$, in contrast to the results of \cite{MR2182132, MR3649432, MR3872387}.

    \item We perform several numerical experiments to validate our theoretical results.
\end{itemize}

The remainder of this paper is organized as follows. Section~\ref{sec:assumptions} details the analytical framework and assumptions. In Section~\ref{sec:numerical_schemes_and_main_results}, we formulate the spatially semidiscrete and fully discrete approximation schemes and state our main results (see Theorems 2.1 and 2.2). In Section~\ref{sec:regularity_results}, we establish the requisite regularity estimates. Section~\ref{sec:deterministic_error_estimates} provides auxiliary deterministic error estimates. Finally, the proofs of the main theorems are provided in Section~\ref{sec:proof_of_main_results}, followed by supporting numerical experiments in Section~\ref{sec:numerical_results}.

%%%%%%%%%%%%%%%%%%%%%%%%%%%%%%%%%%%%%%%%%%%%%%%%%%%%%%%%%%%%%%%%%%%%%%%%%%%%
\section{Analytical framework and assumptions}\label{sec:assumptions}
%%%%%%%%%%%%%%%%%%%%%%%%%%%%%%%%%%%%%%%%%%%%%%%%%%%%%%%%%%%%%%%%%%%%%%%%%%%%
Throughout this article, we retain the functional setting introduced in Section~\ref{sec:introduction}. We denote by $\mathcal{L}(H)$ the space of bounded linear operators on $H$, equipped with the standard operator norm $\|\cdot\|_{\mathcal{L}(H)}$. The letter $C$ represents a generic positive constant whose exact value may change from line to line, with any essential parameter dependencies indicated explicitly.

For a well-defined theory of stochastic integration in this infinite-dimensional setting, we follow the framework of Pr\'{e}v\^{o}t and R\"{o}ckner~\cite{MR2329435} and introduce the separable Hilbert space $U_0 \coloneqq Q^{1/2}(U)$. This space is endowed with the inner product
\begin{equation*}
    (u_0, v_0)_{U_0} \coloneqq (Q^{-1/2}u_0, Q^{-1/2}v_0)_U, \quad u_0, v_0 \in U_0,
\end{equation*}
where $Q^{-1/2}$ denotes the pseudoinverse of $Q^{1/2}$ if $Q$ is not injective. Furthermore, we denote by $\mathrm{HS}(U_0, H)$ the space of all Hilbert--Schmidt operators $\Phi \colon U_0 \to H$, which forms a separable Hilbert space under the norm
\begin{equation*}
    \|\Phi\|_{\mathrm{HS}(U_0, H)} \coloneqq \left( \sum_{m=1}^\infty \|\Phi \psi_m\|_H^2 \right)^{1/2},
\end{equation*}
where $\{\psi_m\}_{m \ge 1}$ is an arbitrary orthonormal basis of $U_0$.

The following lemma provides a Burkholder--Davis--Gundy (BDG) type inequality for $H$-valued stochastic integrals, which will be used repeatedly to estimate the moments of stochastic convolutions.

\begin{lemma}[{\cite[Proposition~2.12]{MR3154916}}]\label{lem:BDG}
Let \( p \ge 2 \) and \( 0 \le t_1 < t_2 \le T \).
Let \( \Phi \colon [0,T]\times\Omega \to {\mathrm{HS}(U_0, H)} \) be a predictable process such that
\begin{equation*}
    \mathbb{E}\left[ \left( \int_{t_1}^{t_2} \|\Phi(\sigma)\|_{\mathrm{HS}(U_0, H)}^2\, d\sigma \right)^{p/2} \right] < \infty .
\end{equation*}
Then the stochastic integral \( \int_{t_1}^{t_2} \Phi(\sigma)\, dW(\sigma) \) is well defined and satisfies
\begin{equation*}
    \mathbb{E}\left[ \left\| \int_{t_1}^{t_2} \Phi(\sigma)\, dW(\sigma) \right\|_H^p \right] \le C_p\, \mathbb{E}\left[ \left( \int_{t_1}^{t_2} \|\Phi(\sigma)\|_{\mathrm{HS}(U_0, H)}^2\, d\sigma \right)^{p/2} \right],
\end{equation*}
where the constant $C_p$ is given by
\begin{equation*}
    C_p = \left(\frac{p(p-1)}{2}\right)^{p/2} \left(\frac{p}{p-1}\right)^{p\left(\frac{p}{2}-1\right)} .
\end{equation*}
\end{lemma}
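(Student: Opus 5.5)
The plan is to reduce the statement to the scalar Burkholder--Davis--Gundy inequality for real continuous martingales, combined with Itô's formula for the squared norm of a Hilbert-space-valued stochastic integral. Throughout I take the well-definedness of the stochastic integral for predictable integrands in $\mathrm{HS}(U_0,H)$ with $\int_{t_1}^{t_2}\|\Phi\|^2_{\mathrm{HS}(U_0,H)}\,d\sigma<\infty$ almost surely as given from \cite{MR2329435}. The moment hypothesis implies this almost-sure integrability, so the first assertion follows directly.

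For the estimate, set $M(t)=\int_{t_1}^{t}\Phi(\sigma)\,dW(\sigma)$ for $t\in[t_1,t_2]$, a continuous $H$-valued local martingale. First localize with stopping times $\tau_n$ at which $\|M\|_H$ or $\int_{t_1}^{t}\|\Phi\|^2_{\mathrm{HS}}$ exceeds $n$. All quantities are then bounded, and at the end we pass $n\to\infty$ by Fatou's lemma.

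Apply Itô's formula to $f(x)=\|x\|_H^p$, which is $C^2$ for $p\ge 2$, with
\begin{equation*}
Df(x)=p\|x\|_H^{p-2}x,\qquad \|D^2f(x)\|\le p(p-1)\|x\|_H^{p-2}.
\end{equation*}
The martingale part has zero expectation after localization. This gives
\begin{equation*}
\mathbb{E}\|M(t)\|_H^p\le \frac{p(p-1)}{2}\,\mathbb{E}\int_{t_1}^{t}\|M(\sigma)\|_H^{p-2}\|\Phi(\sigma)\|^2_{\mathrm{HS}(U_0,H)}\,d\sigma .
\end{equation*}
Bound the integrand by $\sup_{s\le t}\|M(s)\|_H^{p-2}$ and apply Hölder's inequality with exponents $p/(p-2)$ and $p/2$:
\begin{equation*}
\mathbb{E}\|M(t)\|_H^p\le \frac{p(p-1)}{2}\Bigl(\mathbb{E}\sup_{s\le t}\|M(s)\|_H^p\Bigr)^{\frac{p-2}{p}}\Bigl(\mathbb{E}\Bigl(\int_{t_1}^{t}\|\Phi\|^2_{\mathrm{HS}(U_0,H)}\,d\sigma\Bigr)^{p/2}\Bigr)^{2/p}.
\end{equation*}
Since $\|M\|_H$ is a nonnegative submartingale, Doob's maximal inequality gives
\begin{equation*}
\mathbb{E}\sup_{s\le t}\|M(s)\|_H^p\le \Bigl(\frac{p}{p-1}\Bigr)^p\,\mathbb{E}\|M(t)\|_H^p.
\end{equation*}
Substituting this, dividing by $(\mathbb{E}\|M(t)\|^p)^{(p-2)/p}$ (which is finite thanks to the localization), and raising both sides to the power $p/2$ yields exactly the constant
\begin{equation*}
C_p=\Bigl(\frac{p(p-1)}{2}\Bigr)^{p/2}\Bigl(\frac{p}{p-1}\Bigr)^{p(\frac p2-1)}.
\end{equation*}

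The main technical point is the justification of Itô's formula in infinite dimensions together with the vanishing expectation of the local martingale term. I would handle it by localization, or alternatively by projecting onto finite-dimensional spaces spanned by an orthonormal basis of $H$ and passing to the limit. The case $p=2$ is simply Itô's isometry, with $C_2=1$.
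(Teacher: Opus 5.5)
Your proposal is correct and follows essentially the same route as the paper. The paper gives no proof of its own and simply invokes \cite[Proposition~2.12]{MR3154916}, which rests on the classical Da Prato--Zabczyk argument that you reproduce: It\^o's formula for $\|x\|_H^p$, H\"older's inequality with exponents $p/(p-2)$ and $p/2$, and Doob's $L^p$ maximal inequality. Your bookkeeping yields exactly the stated constant, since the Doob factor $(p/(p-1))^{p-2}$ raised to the power $p/2$ gives $(p/(p-1))^{p(\frac{p}{2}-1)}$, and your localization/Fatou step together with the separate treatment of $p=2$ correctly handles the division step and the vanishing of the martingale term.
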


To guarantee the existence of a unique solution to \eqref{eq:SEE} and to establish the strong error estimates, we impose the following assumptions.

\begin{assumption}\label{ass:operator_A}
    As introduced in Section~\ref{sec:introduction}, we consider a densely defined, self-adjoint, and positive definite linear operator $A \colon \mathcal{D}(A) \subset H \to H$ with a compact inverse. 
\end{assumption}

\begin{assumption}\label{ass:initial_data}
Let $p \ge 2$ and $\mu \in (0,2]$. Let the initial data $X_0 \colon \Omega \to H$ is measurable from the measurable space $(\Omega, \mathcal{F}_0)$ to $(H, \mathcal{B}(H))$, and satisfies
\[
    X_0 \in L^p\bigl(\Omega; \dot{H}^{\mu}\bigr).
\]
\end{assumption}

\begin{assumption}\label{ass:diffusion}
Let $\beta \in (0,2]$ be a parameter characterizing the spatial regularity of the noise process. The mapping $G \colon [0,T] \to  \mathrm{HS}(U_0, \dot{H}^{\beta-1})$ is uniformly bounded and H\"older continuous in time. Specifically, there exists a constant $C > 0$ such that for all $t, s \in [0,T]$,
\begin{align*}
\|G(t)\|_{\mathrm{HS}(U_0, \dot{H}^{\beta-1})} &\leq C, \\
\|G(t)-G(s)\|_{\mathrm{HS}(U_0, \dot{H}^{\beta-1})} &\leq C \, |t-s|^\delta, \quad \text{for some } \delta \in [\beta/2, 1].
\end{align*}
\end{assumption}

\begin{assumption}\label{ass:drift}
For the parameter $\beta \in (0,2]$ given in Assumption~\ref{ass:diffusion}, the mapping $F \colon [0,T] \times H \to H$ satisfies a global Lipschitz condition with respect to its second variable and a H\"older condition of order $\beta/2$ with respect to the time variable. Specifically, there exists a constant $C>0$ such that for all $t, s \in [0,T]$ and $u, v \in H$,
\begin{equation*}
\|F(s,0)\|_H \le C, \qquad \|F(t,u)-F(s,v)\|_H \le C\,\left( |t-s|^{\beta/2} + \|u-v\|_H\right).
\end{equation*}
Consequently, $F$ satisfies the linear growth condition
\begin{equation}
\|F(t,v)\|_H \le C\bigl(1 + \|v\|_H\bigr), \quad \forall\, t \in [0,T], \; v \in H.
\end{equation}
To achieve higher-order convergence rates, we further assume that $F$ is twice continuously Fr\'echet differentiable with respect to its second variable, and that for a given \(\vartheta \in [0,2)\), its derivatives satisfy
\begin{align}
\|F'(t,u)v\|_H &\leq C\,\|v\|_H, \quad \forall\, t \in [0,T], \; u, v \in H,\\
\|F''(t,u)(v_{1},v_{2})\|_{\dot H^{-\vartheta}} &\leq C \,\|v_{1}\|_H\cdot\|v_{2}\|_H, \quad \forall\, t \in [0,T], \; u, v_{1}, v_{2} \in H,
\end{align}
where $F'$ and $F''$ denote the first and second Fr\'echet derivatives of $F$ with respect to the second variable.
\end{assumption}

\begin{remark}\label{rem:drift_regularity}
In Assumption~\ref{ass:drift}, we impose a temporal H\"older continuity condition of order $\beta/2$ on the drift $F$, rather than the global Lipschitz condition traditionally assumed in standard texts (see, e.g., Pazy~\cite[Chapter~6]{MR710486}) and recent literature (see, e.g., \cite{MR4126762, MR4927014}). Since the temporal convergence rate of the scheme is inherently limited to $O(k^{\beta/2})$ by the stochastic convolution, requiring higher temporal regularity for the deterministic drift is overly restrictive. By aligning the regularity of the drift with that of the stochastic convolution, we expand the class of admissible time-dependent drift and diffusion coefficients while preserving the optimal $O(h^\beta + k^{\beta/2})$ strong error estimate.
\end{remark}

The following well-posedness and regularity results follow directly from the abstract framework developed in \cite{NaikTripathi2026}. Under Assumptions~\ref{ass:initial_data}--\ref{ass:drift}, the continuous embedding $\dot{H}^\mu \hookrightarrow H$ guarantees that $X_0 \in L^p(\Omega; H)$, which yields a uniform a priori bound in $H$. Furthermore, the precise temporal H\"older exponent is determined by the interplay between the initial data regularity $\mu$ and the noise regularity $\beta$.

\begin{theorem}[Well-posedness and regularity]
\label{thm:base_regularity}
Under Assumptions~\ref{ass:operator_A}--\ref{ass:drift}, Then problem~\eqref{eq:SEE} admits a unique mild solution $X$. Furthermore, for $p \in [2, \infty)$, there exists a constant $C_1 > 0$ depending on $T$, such that
\begin{equation}\label{eq:base_spatial_regularity}
    \sup_{t \in [0,T]} \|X(t)\|_{L^p(\Omega; H)} \le C_1 \bigl( 1 + \|X_0\|_{L^p(\Omega; H)} \bigr).
\end{equation}
Moreover, for any $0 \le s < t \le T$, there exists a constant $C_2 > 0$ depending on $T$ and $X_0$ such that for any exponent $0 < \tilde{\delta} < \frac{1}{2}\min(1, \mu, \beta)$,
\begin{equation}\label{eq:base_temporal_regularity}
    \|X(t) - X(s)\|_{L^p(\Omega; H)} \le C_2 \, (t-s)^{\tilde{\delta}}.
\end{equation}
\end{theorem}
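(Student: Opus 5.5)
We prove existence and uniqueness by a Banach fixed point argument on the mild formulation \eqref{eq:mild_solution_SEE}. Let $\mathcal{X}$ be the space of predictable $H$-valued processes $Y$ with $\sup_{t\in[0,T]}\|Y(t)\|_{L^p(\Omega;H)}<\infty$, equipped with the weighted norm $\|Y\|_\lambda \coloneqq \sup_{t}e^{-\lambda t}\|Y(t)\|_{L^p(\Omega;H)}$. On $\mathcal{X}$ define
\[
(\Gamma Y)(t) = S(t)X_0 + \int_0^t S(t-s)F(s,Y(s))\,ds + \int_0^t S(t-s)G(s)\,dW(s).
\]

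First we check that $\Gamma$ maps $\mathcal{X}$ into itself. The first term is bounded by $\|X_0\|_{L^p(\Omega;H)}$, since $S$ is contractive. The drift term is bounded through the linear growth of $F$ together with Minkowski's integral inequality. For the stochastic convolution we use Lemma~\ref{lem:BDG}, which gives the bound $C\bigl(\int_0^t \|A^{(1-\beta)/2}S(t-s)\|_{\mathcal{L}(H)}^2\,\|G(s)\|_{\mathrm{HS}(U_0,\dot H^{\beta-1})}^2\,ds\bigr)^{1/2}$. If $\beta\ge1$, the operator factor is bounded. If $\beta<1$, the analytic smoothing estimate $\|A^{\gamma}S(t)\|_{\mathcal{L}(H)}\le Ct^{-\gamma}$ bounds it by $C(t-s)^{-(1-\beta)}$, and this is square integrable because $\beta>0$. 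Predictability of $\Gamma Y$ follows from standard arguments.

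Next we show contraction. Lipschitz continuity of $F$ gives
\[
\|\Gamma Y(t)-\Gamma Z(t)\|_{L^p(\Omega;H)}\le C\int_0^t\|Y(s)-Z(s)\|_{L^p(\Omega;H)}\,ds,
\]
so $\|\Gamma Y-\Gamma Z\|_\lambda\le (C/\lambda)\|Y-Z\|_\lambda$, and $\Gamma$ is a contraction for $\lambda$ large. The fixed point is the unique mild solution. Applying Gronwall's inequality to the bounds from the self-mapping step then yields \eqref{eq:base_spatial_regularity}.

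For \eqref{eq:base_temporal_regularity}, fix $s<t$ and write $X(t)-X(s)$ as the sum of three differences.
\begin{itemize}
\item \textbf{Initial term.} Write it as $(S(t-s)-I)S(s)X_0$. The estimate $\|(S(\tau)-I)A^{-\gamma}\|_{\mathcal{L}(H)}\le C\tau^{\gamma}$ for $\gamma\in[0,1]$ gives the bound $C(t-s)^{\mu/2}\|X_0\|_{L^p(\Omega;\dot H^\mu)}$ when $\mu\le 2$. The exponent is capped at $\min(\mu,1)/2$ after the other terms are combined.
\item \textbf{Drift convolution.} Split it as $\int_s^t S(t-r)F\,dr+\int_0^s (S(t-r)-S(s-r))F\,dr$. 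The first piece is $O(t-s)$ by the moment bound. For the second, use $\|(S(t-s)-I)A^{-\gamma}\|_{\mathcal{L}(H)}\,\|A^{\gamma}S(s-r)\|_{\mathcal{L}(H)}\le C(t-s)^{\gamma}(s-r)^{-\gamma}$ with $\gamma<1$. This gives $C(t-s)^{\gamma}$ for any $\gamma<1$.
\item \textbf{Stochastic convolution.} Apply Lemma~\ref{lem:BDG} to both pieces. The piece over $[s,t]$ gives $(t-s)^{\min(\beta,1)/2}$, or $(t-s)^{\beta/2-\epsilon}$ at worst. For the piece over $[0,s]$, use
\[
\int_0^s\|(S(t-s)-I)A^{-\gamma}A^{\gamma+(1-\beta)/2}S(s-r)\|_{\mathcal{L}(H)}^2\,dr\le C(t-s)^{2\gamma}\int_0^s (s-r)^{-(2\gamma+1-\beta)}\,dr.
\]
The integral is finite for $2\gamma<\beta$, so we take $\gamma<\beta/2$, restricted also by $\gamma\le1/2$.
\end{itemize}

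Collecting the three terms gives any exponent $\tilde\delta<\tfrac12\min(1,\mu,\beta)$. The strict inequality absorbs the endpoint logarithmic losses. The constant depends on $\|X_0\|_{L^p(\Omega;\dot H^\mu)}$ and on the moment bound \eqref{eq:base_spatial_regularity}.

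The main obstacle is the stochastic convolution in the regime $\beta<1$. There $G$ takes values only in a negative-order space, so the integrability of $(t-s)^{-(1-\beta)}$ and the trade-off for the exponent $\gamma$ must be tracked carefully. Alternatively, the whole result can be invoked directly from the abstract framework in~\cite{NaikTripathi2026}, as indicated before the statement.
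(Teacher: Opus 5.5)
Your proof is correct, but it takes a genuinely different route from the paper. The paper carries out no estimates here. It invokes Theorem~2.1 of the companion work \cite{NaikTripathi2026}, taking $H$ as the abstract base space, so that the solution space $\mathbb{V}_p$ becomes the predictable processes in $\mathcal{C}([0,T];L^p(\Omega;H))$. It reads off the moment bound from the $\mathbb{V}_p$-norm estimate, and takes the H\"older bound from part~(iii) of that theorem, with $\mu$ in the role of the excess-regularity parameter $\varepsilon$.

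You instead give a self-contained argument: a Banach fixed point in the $e^{-\lambda t}$-weighted $\sup_t L^p$ norm, Gronwall for the moment bound, and a direct splitting of $X(t)-X(s)$. That splitting uses only Lemma~\ref{lem:BDG} and the bounds $\|A^{\gamma}S(t)\|_{\mathcal{L}(H)}\le Ct^{-\gamma}$ and $\|(S(\tau)-I)A^{-\gamma}\|_{\mathcal{L}(H)}\le C\tau^{\gamma}$.

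The citation is shorter and inherits the generality of the abstract framework, but it leaves the reader to check that Assumptions~\ref{ass:operator_A}--\ref{ass:drift} match its hypotheses. Your version is longer but shows where each constraint in $\tilde\delta<\frac12\min(1,\mu,\beta)$ comes from. The factor $\mu/2$ comes from $(S(t-s)-I)S(s)X_0$, and $\min(\beta,1)/2$ from the stochastic integral over $[s,t]$. The endpoint is lost in the stochastic integral over $[0,s]$, where integrability of $(s-r)^{-(2\gamma+1-\beta)^+}$ forces $\gamma<\beta/2$.

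There are two minor slips to fix:
\begin{itemize}
\item In the self-mapping step, $\|A^{(1-\beta)/2}S(t-s)\|_{\mathcal{L}(H)}\le C(t-s)^{-(1-\beta)/2}$. It is the square of this, $(t-s)^{-(1-\beta)}$, that is integrable exactly because $\beta>0$. As written, square integrability of $(t-s)^{-(1-\beta)}$ would require $\beta>1/2$, which is the regime you yourself flag as the delicate one.
\item The extra restriction $\gamma\le 1/2$ in the $[0,s]$ stochastic piece is not needed. The condition $\gamma<\beta/2\le 1$ already suffices for the bound on $(S(\tau)-I)A^{-\gamma}$; the cap at $1/2$ in the final exponent comes from the $[s,t]$ piece.
\end{itemize}
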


% \begin{proof}
% The result follows as a direct consequence of the generalized well-posedness framework established in \cite[Theorem~2.1]{NaikTripathi2026}. By setting the base space in that abstract framework to $H$, the solution space $\mathbb{V}_p$ reduces to the space of predictable processes in $\mathcal{C}([0,T]; L^p(\Omega; H))$. Consequently, the definition of the $\mathbb{V}_p$-norm directly yields \eqref{eq:base_spatial_regularity}.

% Regarding the temporal regularity, the Assumptions~\ref{ass:operator_A}-\ref{ass:drift} satisfy the constraints of \cite[Theorem~2.1]{NaikTripathi2026} without introducing time singularities. In the context of the generalized framework, our initial data possesses an excess spatial regularity of $\mu > 0$ (corresponding to the parameter $\varepsilon$ in \cite[Theorem~2.1(iii)]{NaikTripathi2026}) relative to the base space $H$, while the diffusion operator maps into the negative fractional space $\dot{H}^{\beta - 1}$. Substituting these specific regularity shifts into the global H\"older continuity estimate of \cite[Theorem~2.1(iii)]{NaikTripathi2026} guarantees that $X \in \mathcal{C}^{0,\tilde{\delta}}([0,T]; L^p(\Omega; H))$ for any exponent $0 < \tilde{\delta} < \frac{1}{2}\min(1, \mu, \beta)$. This is precisely equivalent to the increment bound~\eqref{eq:base_temporal_regularity}.
% \end{proof}

\begin{proof}
The assertions follow directly from \cite[Theorem~2.1]{NaikTripathi2026}, as Assumptions~\ref{ass:operator_A}--\ref{ass:drift} satisfy the required hypotheses of the theorem. By setting the abstract base space in \cite{NaikTripathi2026} to $H$, the solution space $\mathbb{V}_p$ coincides with the space of predictable processes in $\mathcal{C}([0,T]; L^p(\Omega; H))$. The uniform spatial bound \eqref{eq:base_spatial_regularity} is thus an immediate consequence of the corresponding $\mathbb{V}_p$-norm estimate.

To establish the temporal regularity \eqref{eq:base_temporal_regularity}, we use the initial data regularity $X_0 \in L^p(\Omega; \dot{H}^\mu)$ ($\mu$ corresponds to the parameter $\varepsilon$ in \cite[Theorem~2.1(iii)]{NaikTripathi2026}). Applying the global H\"older continuity estimate from \cite[Theorem~2.1(iii)]{NaikTripathi2026} then guarantees that $X \in \mathcal{C}^{0,\tilde{\delta}}([0,T]; L^p(\Omega; H))$ for any exponent $0 < \tilde{\delta} < \frac{1}{2}\min(1, \mu, \beta)$, yielding \eqref{eq:base_temporal_regularity}.
\end{proof}

As a direct consequence of the a priori bound in Theorem~\ref{thm:base_regularity} and the linear growth of the drift term, we obtain the following uniform bound.

\begin{corollary}
\label{cor:drift_bound}
Under Assumptions~\ref{ass:operator_A}--\ref{ass:drift}, there exists a constant $C > 0$ such that for all $t \in [0,T]$,
\begin{equation}\label{eq:drift_bound}
    \|F(t,X(t))\|_{L^{p}(\Omega;H)} \le C\, \bigl(1 + \|X_{0}\|_{L^{p}(\Omega;H)}\bigr).
\end{equation}
\end{corollary}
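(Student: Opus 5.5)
The estimate follows by combining the linear growth condition of Assumption~\ref{ass:drift} with the uniform moment bound \eqref{eq:base_spatial_regularity} of Theorem~\ref{thm:base_regularity}.

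First, fix $t\in[0,T]$. By the linear growth condition, pathwise we have
\[
\|F(t,X(t))\|_H \le C\bigl(1+\|X(t)\|_H\bigr).
\]
The mild solution is predictable, hence measurable, and $F$ is continuous. Therefore $F(t,X(t))$ is an $H$-valued random variable, and we may take $L^p(\Omega)$ norms on both sides.

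Next, apply Minkowski's inequality in $L^p(\Omega;\mathbb{R})$:
\[
\|F(t,X(t))\|_{L^p(\Omega;H)} \le C\bigl(1+\|X(t)\|_{L^p(\Omega;H)}\bigr).
\]
Here the constant $1$ has $L^p$ norm $1$ because $\mathbb{P}$ is a probability measure.

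Finally, insert \eqref{eq:base_spatial_regularity}. Since $\|X(t)\|_{L^p(\Omega;H)}\le C_1(1+\|X_0\|_{L^p(\Omega;H)})$ uniformly in $t$, we get
\[
\|F(t,X(t))\|_{L^p(\Omega;H)}\le C(1+C_1)\bigl(1+\|X_0\|_{L^p(\Omega;H)}\bigr).
\]
The constant depends only on $T$, $p$ and the constants in the assumptions, and not on $t$. Finiteness of the right-hand side uses $X_0\in L^p(\Omega;\dot H^\mu)\hookrightarrow L^p(\Omega;H)$, which holds because $A$ is positive definite: $\|v\|_H\le \lambda_1^{-\mu/2}\|v\|_{\dot H^\mu}$, where $\lambda_1>0$ is the smallest eigenvalue of $A$.

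There is no real obstacle here. The only point needing care is the measurability of $F(t,X(t))$, which follows from the joint continuity of $F$ implied by Assumption~\ref{ass:drift}.
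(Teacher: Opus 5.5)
Your proposal is correct and follows essentially the same route as the paper: apply the linear growth condition from Assumption~\ref{ass:drift} pathwise, take $L^p(\Omega;H)$ norms using Minkowski's inequality, and then insert the uniform bound \eqref{eq:base_spatial_regularity}. Your additional remarks on the measurability of $F(t,X(t))$ and on the embedding $\dot H^\mu \hookrightarrow H$ are sound details that the paper leaves implicit.
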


\begin{proof}
Applying the $L^p(\Omega)$ norm to the linear growth condition of $F$ (Assumption~\ref{ass:drift}) and utilizing Minkowski's inequality, we have
\[
    \|F(t, X(t))\|_{L^p(\Omega; H)} \le C \bigl( 1 + \|X(t)\|_{L^p(\Omega; H)} \bigr).
\]
The result then follows immediately by substituting the bound~\eqref{eq:base_spatial_regularity} from Theorem~\ref{thm:base_regularity} into the right-hand side.
\end{proof}

%%%%%%%%%%%%%%%%%%%%%%%%%%%%%%%%%%%%%%%%%%%%%%%%%%%%
\section{Numerical schemes and main results}\label{sec:numerical_schemes_and_main_results}
%%%%%%%%%%%%%%%%%%%%%%%%%%%%%%%%%%%%%%%%%%%%%%%%%%%%
To quantify the accuracy of our numerical approximations, our analysis relies on the concept of strong convergence, measured in the Bochner space $L^p(\Omega; H)$. For any $p \ge 2$, this norm is defined as
\begin{equation}
    \|X\|_{L^p(\Omega;H)} \coloneqq \left( \mathbb{E}\left[ \|X\|_H^p \right] \right)^{1/p},
\end{equation}
where $\mathbb{E}$ denotes the expectation with respect to the underlying probability measure $\mathbb{P}$. The primary objective of our theoretical framework is to establish optimal bounds for the strong approximation error in this metric. While our rigorous spatial and temporal error analyses are conducted in the general $L^p$-setting for any $p \ge 2$, practical convergence studies predominantly focus on the mean-square error ($p=2$). This is the standard benchmark in the numerical SPDE literature, as it corresponds directly to the computationally tractable root-mean-square error (RMSE). Consequently, we adopt the $L^2(\Omega;H)$-norm for all empirical validations in our numerical experiments (see Section~\ref{sec:numerical_results}).

%%%%%%%%%%%%%%%%%%%%%%%%%%%%%%%%%%%%%%%%%%%%%
\subsection{Spatially semidiscrete approximation} \label{subsec:semidiscrete_approximation}
%%%%%%%%%%%%%%%%%%%%%%%%%%%%%%%%%%%%%%%%%%%%%

While the abstract framework guarantees well-posedness and regularity, formulating our spatial discretization requires a concrete geometric setting. Therefore, to numerically approximate \eqref{eq:SEE} via a standard Galerkin finite element method, we restrict our attention to a bounded, convex polygonal domain $\mathcal{O} \subset \mathbb{R}^d$ for $d \in \{1, 2, 3\}$. Henceforth, we set $U = H = L^2(\mathcal{O})$ and define the linear operator $A \colon \mathcal{D}(A) \subset H \to H$ by $A u = -\nabla \cdot (\boldsymbol{a}(\boldsymbol{x}) \nabla u) + c(\boldsymbol{x}) u$ subject to homogeneous Dirichlet boundary conditions, where the domain of $A$ is specified as $\mathcal{D}(A) = H^2(\mathcal{O}) \cap H_0^1(\mathcal{O}) \eqqcolon \dot{H}^2$. We assume that the coefficient $\boldsymbol{a} \colon \mathcal{O} \to \mathbb{R}^{d \times d}$ and $c \colon \mathcal{O} \to [0,\infty)$ are sufficiently smooth, and $\boldsymbol{a}(\boldsymbol{x})$ is symmetric and uniformly positive definite; that is, there exists a constant $a_0 > 0$ such that
$$\boldsymbol{y}^T \boldsymbol{a}(\boldsymbol{x}) \boldsymbol{y} \ge a_0 |\boldsymbol{y}|^2 \quad \text{for all } \boldsymbol{x} \in \mathcal{O} \text{ and } \boldsymbol{y} \in \mathbb{R}^d.$$

Within this concrete setting, let $\{V_h\}_{h \in (0,1]}$ be a quasi-uniform family of finite-dimensional subspaces of $\dot{H}^1$ consisting of continuous, piecewise linear functions defined over a regular triangulation of $\mathcal{O}$, with $h$ denoting the maximum mesh size.

Following Kruse~\cite[Section~3]{MR3168284}, we define the generalized orthogonal projector $P_h \colon \dot{H}^{-1} \to V_h$ by 
\begin{equation}
    ( P_h x, y_h )_H = \langle x, y_h \rangle_{\dot{H}^{-1}, \dot{H}^1} \quad \text{for all } x \in \dot{H}^{-1}, y_h \in V_h,
\end{equation}
where $\langle \cdot, \cdot \rangle_{\dot{H}^{-1}, \dot{H}^1}$ denotes the duality pairing between $\dot{H}^{-1}$ and $\dot{H}^1$. Furthermore, let $A_h \colon V_h \to V_h$ be the discrete analogue of the operator $A$, defined by
\begin{equation}
    ( A_h x_h, y_h )_H = ( A^{1/2} x_h, A^{1/2} y_h )_H \quad \text{for all } x_h, y_h \in V_h.
\end{equation}
Since $A_h$ is self-adjoint and positive definite on the finite-dimensional space $V_h$, the operator $-A_h$ generates an analytic semigroup of contractions on $V_h$, denoted by $S_h(t) = e^{-t A_h}$. Furthermore, $S_h(t)$ satisfies the following smoothing property (see, e.g., Kruse~\cite[Eq.~(3.5)]{MR3168284}): for any $\rho \ge 0$, there exists a constant $C > 0$ independent of $h$ such that
\begin{equation}\label{eq:semidiscrete_smoothing}
    \| A_h^\rho S_h(t) y_h \|_H \le C t^{-\rho} \|y_h\|_H \quad \text{for all } t > 0 \text{ and } y_h \in V_h.
\end{equation}

Projecting the continuous equation~\eqref{eq:SEE} onto the finite-dimensional subspace $V_h$ yields the following spatially semidiscrete initial value problem for the adapted process $X_h \colon [0,T]\times\Omega \to V_h$:
\begin{equation}\label{eq:SEE_semidiscrete}
\begin{aligned}
    dX_h(t) + A_h X_h(t)\,dt &= P_h F(t,X_h(t))\,dt + P_h G(t)\,dW(t), \quad t \in (0,T], \\
    X_h(0) &= P_h X_0.
\end{aligned}
\end{equation}
Analogous to the continuous setting, the semidiscrete problem~\eqref{eq:SEE_semidiscrete} is well-posed. Its unique mild solution admits the following integral representation for any $t \in [0,T]$:
\begin{equation}\label{eq:mild_semidiscrete}
X_h(t) = S_h(t) P_h X_0 + \int_{0}^{t} S_h(t - \sigma) P_h F(\sigma,X_h(\sigma))\,d\sigma + \int_{0}^{t} S_h(t - \sigma) P_h G(\sigma)\,dW(\sigma).
\end{equation}

Our main strong error estimate result for the spatial semidiscrete scheme is stated as follows.

\begin{theorem}[Strong error estimate for the semidiscrete scheme]
\label{thm:strong_convergence_of_semi_discrete_scheme}
Suppose that Assumptions~\ref{ass:operator_A}--\ref{ass:drift} hold. Let $p \in [2,\infty)$, $\beta \in (0,2]$, $\mu \in (0,2]$, and define $\nu \coloneqq \min(\beta,\mu)$. Then, there exists a constant $C>0$, independent of $h$ and $t$, such that for every $t \in (0,T]$,
\[
\|X(t) - X_h(t)\|_{L^p(\Omega;H)}
\le C \, h^\beta \, t^{-\frac{\beta-\nu}{2}} \left(1 + \|X_{0}\|_{L^{p}(\Omega;\dot H^{\nu})}\right),
\]
where $X$ and $X_h$ denote the mild solutions of \eqref{eq:SEE} and \eqref{eq:SEE_semidiscrete}, respectively.
\end{theorem}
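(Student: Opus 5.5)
We write the error $e(t) \coloneqq X(t)-X_h(t)$ using the mild formulations \eqref{eq:mild_solution_SEE} and \eqref{eq:mild_semidiscrete}. Introducing the deterministic error operator $E_h(t) \coloneqq S(t) - S_h(t)P_h$, we split
\begin{align*}
e(t) &= E_h(t)X_0 + \int_0^t E_h(t-\sigma)F(\sigma,X(\sigma))\,d\sigma + \int_0^t S_h(t-\sigma)P_h\bigl(F(\sigma,X(\sigma))-F(\sigma,X_h(\sigma))\bigr)\,d\sigma \\
&\quad + \int_0^t E_h(t-\sigma)G(\sigma)\,dW(\sigma) \eqqcolon I_1+I_2+I_3+I_4 .
\end{align*}
The plan is to bound $I_1$, $I_2$ and $I_4$ by $C h^\beta t^{-(\beta-\nu)/2}(1+\|X_0\|_{L^p(\Omega;\dot H^\nu)})$, and $I_3$ by $C\int_0^t \|e(\sigma)\|_{L^p(\Omega;H)}\,d\sigma$ using the Lipschitz property of $F$ in Assumption~\ref{ass:drift}. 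We then close the argument with a singular Gronwall lemma. Since the weight $\sigma^{-(\beta-\nu)/2}$ has exponent at most $1$, and strictly less than $1$ whenever $\nu>0$, it is integrable. The Volterra/Gronwall inequality with weakly singular forcing $\varphi(t)=C h^\beta t^{-\alpha}$, $\alpha=(\beta-\nu)/2<1$, gives $\|e(t)\|\le C h^\beta t^{-\alpha}$, because $\int_0^t \sigma^{-\alpha}d\sigma \le C t^{1-\alpha}\le C T\, t^{-\alpha}$.

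The main tool is the standard deterministic nonsmooth-data estimate (Thom\'ee; cf.\ Kruse~\cite{MR3168284}) on convex polygons with piecewise linear elements:
\[
\|E_h(t)v\|_H \le C h^{r} t^{-(r-s)/2}\|v\|_{\dot H^s}, \qquad 0\le s\le r\le 2,
\]
together with a negative-norm variant for $s\in[-1,0)$, which is needed when $\beta<1$ and which uses the generalized $P_h$ on $\dot H^{-1}$.

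For $I_1$ we take $r=\beta$ and $s=\nu\le\beta$, which directly gives $h^\beta t^{-(\beta-\nu)/2}\|X_0\|_{L^p(\Omega;\dot H^\nu)}$. For $I_2$, Corollary~\ref{cor:drift_bound} only gives $F(\sigma,X(\sigma))\in H$, so $s=0$ yields $\int_0^t h^\beta (t-\sigma)^{-\beta/2}d\sigma$. This is integrable for $\beta<2$ but borderline at $\beta=2$. In that case, or to handle $\beta=2$ uniformly, I would use the temporal Hölder regularity. Write $F(\sigma,X(\sigma)) = F(t,X(t)) + [F(\sigma,X(\sigma))-F(t,X(t))]$. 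The frozen part is handled by the smooth-in-time bound $\|\int_0^t E_h(\tau)\,d\tau\, v\|\le C h^2\|v\|$, which follows from $A^{-1}(S(t)-I)$-type identities and the Ritz projection error. The increment is bounded by $C((t-\sigma)^{\beta/2}+\|X(t)-X(\sigma)\|)$, and this makes the kernel $(t-\sigma)^{-1}$ integrable. For this we need a sharpened Hölder bound $\|X(t)-X(\sigma)\|_{L^p}\le C(t-\sigma)^{\epsilon}\sigma^{-\epsilon'}$, which I expect Theorem~\ref{thm:singular_regularity} to provide (or which Theorem~\ref{thm:base_regularity} already gives with $\tilde\delta>0$).

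For $I_4$ we use Lemma~\ref{lem:BDG} and the Itô isometry to reduce to $\bigl(\int_0^t\|E_h(\tau)G\|_{\mathrm{HS}(U_0,H)}^2 d\tau\bigr)^{1/2}$. We then need the square-integrated estimate $\int_0^t \|E_h(\tau)v\|^2 d\tau \le C h^{2\beta}\|v\|_{\dot H^{\beta-1}}^2$, which comes from the energy-type nonsmooth error bound, and apply it columnwise. Note that no singular factor in $t$ appears here, because the noise is time-uniform.

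The main obstacle is the drift term $I_2$ at $\beta=2$ together with the integrated noise estimate for $\beta\in(0,1)$, where $\dot H^{\beta-1}$ is a negative space. For the latter I would try first to interpolate the square-integrated bound between the cases $\beta=0$ (via stability of $P_h$ on $\dot H^{-1}$ with weight $h$) and $\beta=2$.
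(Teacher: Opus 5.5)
Your decomposition into $I_1,\dots,I_4$ and the Gronwall closure are the same as in the paper, and your treatment of the drift is sound. For $\beta<2$, your direct bound $Ch^\beta\int_0^t(t-\sigma)^{-\beta/2}\,d\sigma$ is simpler than the paper's three-way split of $I_F$. For $\beta=2$, freezing at $F(t,X(t))$ and using Theorem~\ref{thm:singular_regularity}(ii) is exactly what the paper does.

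The gap is in the noise term $I_4$. Under Assumption~\ref{ass:diffusion} the coefficient depends on time. After the Burkholder--Davis--Gundy inequality you must therefore bound $\int_0^t\|E_h(t-\sigma)G(\sigma)\|_{\mathrm{HS}(U_0,H)}^2\,d\sigma$, not $\int_0^t\|E_h(\tau)G\|_{\mathrm{HS}(U_0,H)}^2\,d\tau$ with a fixed $G$. The square-integrated estimate (Lemma~\ref{lem:semidiscrete_error_estimates}(iii)) is a statement about a single fixed vector $v$. It cannot be applied columnwise to $G(t-\tau)\psi_m$, whose argument moves with the integration variable. The pointwise bound $\|E_h(\tau)v\|_H\le Ch^\beta\tau^{-1/2}\|v\|_{\dot H^{\beta-1}}$ on its own leaves the non-integrable kernel $\tau^{-1}$. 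Your remark that the noise is ``time-uniform'' passes over exactly this point, and the H\"older continuity of $G$ never enters your argument.

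The missing step is to freeze the diffusion at the final time, $G(\sigma)=G(t)+\bigl(G(\sigma)-G(t)\bigr)$:
\begin{itemize}
\item[] apply the square-integrated estimate to the fixed operator $G(t)$;
\item[] for the difference, combine the pointwise bound (Lemma~\ref{lem:semidiscrete_error_estimates}(i) with $\rho=\beta$, $\eta=\beta-1$) with $\|G(\sigma)-G(t)\|_{\mathrm{HS}(U_0,\dot H^{\beta-1})}\le C(t-\sigma)^{\delta}$.
\end{itemize}
This yields $Ch^{2\beta}\int_0^t(t-\sigma)^{2\delta-1}\,d\sigma\le Ch^{2\beta}$.

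This step is essential in the trace-class case $\beta=2$. There the rate $h^2\tau^{-1/2}$ in $\mathcal{L}(\dot H^{1},H)$ is sharp: it is attained, e.g.\ for the one-dimensional Laplacian on a uniform mesh, on eigenfunctions with eigenvalue $\approx\tau^{-1}$. So for a merely bounded measurable $G(\cdot)$ the integral can be of size $h^4\log(1/h)$, and your route would only give $h^2|\log h|^{1/2}$.

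For $\beta<2$ you could alternatively rescue the step without time regularity. Choose $\rho=\max(0,\beta-1)$ for $\tau\le h^2$ and $\rho=\min(2,\beta+1)$ for $\tau\ge h^2$ in Lemma~\ref{lem:semidiscrete_error_estimates}(i); this gives $\int_0^\infty\|E_h(\tau)\|_{\mathcal{L}(\dot H^{\beta-1},H)}^2\,d\tau\le Ch^{2\beta}$. That argument fails at $\beta=2$.

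Finally, your worry about $\beta\in(0,1)$ is not a real obstacle. The square-integrated estimate holds for all exponents in $[0,2]$ (Lemma~\ref{lem:semidiscrete_error_estimates}(iii)), and interpolating between $\dot H^{-1}$ and $\dot H^{1}$ is a valid way to derive it.
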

The proof of this theorem requires substantial preliminary estimates and is deferred to Subsection~\ref{subsec:proof_of_semi_discrete_result}

%%%%%%%%%%%%%%%%%%%%%%%%%%%%%%%%%%%%%%%%%%%%%%
\subsection{Fully discrete approximation}\label{subsec:fullydiscrete_approximation}
%%%%%%%%%%%%%%%%%%%%%%%%%%%%%%%%%%%%%%%%%%%%%%

For the temporal discretization, we apply the linearly implicit Euler method to the semidiscrete problem~\eqref{eq:SEE_semidiscrete}. Let $N \in \mathbb{N}$ and introduce a uniform time step $k \coloneqq T/N$, defining the discrete time grid $t_n \coloneqq nk$ for $n=0,1,\dots,N$. The resulting fully discrete approximation $X_h^n \approx X(t_n)$ is generated by the recursion
\begin{equation}\label{eq:recursion}
    X_h^{n} = S_{h,k}X_h^{n-1} + k\,S_{h,k} P_h F(t_{n-1},X_h^{n-1}) + S_{h,k}P_h G(t_{n-1})\,\Delta W^{n}, \quad n=1,\dots,N,
\end{equation}
with the initial condition $X_h^0 = P_h X_0$, where $S_{h,k} \coloneqq (I + k A_h)^{-1}$ and $\Delta W^n \coloneqq W(t_{n}) - W(t_{n-1})$. Similar to the continuous setting, the discrete operator $S_{h,k}$ satisfies a smoothing property (see, e.g., Kruse~\cite[Eq.~(4.8)]{MR3168284}): for any $\rho \in [0,1]$, there exists a constant $C > 0$ independent of $h, k$, and $n$ such that for all $n=1,\dots,N$,
\begin{equation}\label{eq:discrete_smoothing_estimate}
    \|A_h^{\rho} S_{h,k}^{n} x_h\|_H \le C\,t_{n}^{-\rho}\,\|x_h\|_H \quad \text{for all } x_h \in V_h.
\end{equation}
By iteratively expanding the recursion~\eqref{eq:recursion}, the fully discrete solution admits a discrete variation of constants formulation. Specifically, $X_h^n$ can be expressed explicitly as
\begin{equation}\label{eq:mild_fully_discrete}
    X_h^n = S_{h,k}^n\,X_h^0 
    + k \sum_{i=0}^{n-1} S_{h,k}^{\,n-i}\,P_h\,F(t_i,X_h^i) 
    + \sum_{i=0}^{n-1} S_{h,k}^{\,n-i}\,P_h\,G(t_i)\,\Delta W^{i+1}, \quad n=1,\dots,N.
\end{equation}

 Our main strong error estimate result for the fully discrete scheme is stated as follows.

\begin{theorem}[Strong error estimate for the fully discrete scheme]\label{thm:strong_convergence_of_fully_discrete_scheme}
Suppose that Assumptions~\ref{ass:operator_A}--\ref{ass:drift} hold. Let $p \in [2,\infty)$, $\beta \in (0,2]$, $\mu \in (0,2]$, and define $\nu \coloneqq \min(\beta,\mu)$. Then, there exists a constant $C>0$, independent of the discretization parameters $h, k$ and the discrete time index $n \in \{1,\dots,N\}$, such that for all $n=1,\dots,N$,
\begin{equation}\label{eq:main-convergence}
    \|X(t_n) - X_h^n\|_{L^p(\Omega;H)} \le C \left(h^\beta + k^{\frac{\beta}{2}}\right) t_{n}^{-\frac{\beta-\nu}{2}} \left(1 + \|X_{0}\|_{L^{p}(\Omega;\dot H^{\nu})}\right),
\end{equation}
where $X(t_n)$ is the mild solution of \eqref{eq:SEE} evaluated at $t_n$, and $X_h^n$ is the fully discrete approximation defined by \eqref{eq:mild_fully_discrete}.
\end{theorem}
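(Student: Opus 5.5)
We split the error as $X(t_n)-X_h^n = \bigl(X(t_n)-X_h(t_n)\bigr) + \bigl(X_h(t_n)-X_h^n\bigr)$. The first term is controlled by Theorem~\ref{thm:strong_convergence_of_semi_discrete_scheme} at $t=t_n$, which already gives $C h^\beta t_n^{-(\beta-\nu)/2}(1+\|X_0\|_{L^p(\Omega;\dot H^\nu)})$. The remaining task is the temporal error $e^n \coloneqq X_h(t_n)-X_h^n$. We need a bound of order $k^{\beta/2}t_n^{-(\beta-\nu)/2}$, uniform in $h$. To get it, we first need discrete analogues of the regularity results: $h$-uniform bounds on $\|A_h^{\gamma/2}X_h(t)\|_{L^p(\Omega;H)}$ with the singular weight $t^{-(\gamma-\nu)/2}$, and on increments $\|X_h(t)-X_h(s)\|_{L^p}\le C(t-s)^{\beta/2}s^{-(\beta-\nu)/2}$. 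These mirror Theorem~\ref{thm:singular_regularity} and are proved by the same arguments with \eqref{eq:semidiscrete_smoothing}.

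\textbf{Step 1: decompose the temporal error.} Subtracting \eqref{eq:mild_fully_discrete} from \eqref{eq:mild_semidiscrete} at $t_n$ gives four pieces.
\begin{itemize}
\item The initial-data part $(S_h(t_n)-S_{h,k}^n)P_hX_0$. The standard deterministic nonsmooth-data estimate from Section~\ref{sec:deterministic_error_estimates} gives $\|(S_h(t_n)-S_{h,k}^n)A_h^{-\nu/2}\|\le Ck^{\beta/2}t_n^{-(\beta-\nu)/2}$. We combine it with the $h$-uniform bound $\|A_h^{\nu/2}P_hA^{-\nu/2}\|\le C$ for $\nu\le 2$, using quasi-uniformity.
\item The stochastic part $\sum_i\int_{t_i}^{t_{i+1}}\bigl[S_h(t_n-\sigma)P_hG(\sigma)-S_{h,k}^{n-i}P_hG(t_i)\bigr]\,dW$. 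Lemma~\ref{lem:BDG} reduces this to a deterministic square-function estimate. We split it into the operator difference $S_h(t_n-\sigma)-S_{h,k}^{n-i}$ acting on $P_hG\in \mathrm{HS}(U_0,\dot H^{\beta-1})$, which has rate $k^{\beta/2}$ via the standard $L^2$-in-time estimate $\int_0^{t_n}\|(S_h(t_n-\sigma)-S_{h,k}^{n-i})A_h^{(1-\beta)/2}\|^2\,d\sigma\le Ck^\beta$, and the Hölder term in $G$ with $\delta\ge\beta/2$. This part carries no singular weight.
\item The drift part.
\end{itemize}

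\textbf{Step 2: the drift (main obstacle).} We write
\[
\int_0^{t_n}S_h(t_n-\sigma)P_hF(\sigma,X_h(\sigma))\,d\sigma - k\sum_i S_{h,k}^{n-i}P_hF(t_i,X_h^i).
\]
We split this into four contributions:
\begin{itemize}
\item the operator error $S_h(t_n-\sigma)-S_{h,k}^{n-i}$ on $F$ bounded in $H$, which gives $k\,|\log k|$, or $k^{\beta/2}$ after a smoothing trade;
\item the time-Hölder error of $F$, which gives $k^{\beta/2}$;
\item the error $F(t_i,X_h(\sigma))-F(t_i,X_h(t_i))$;
\item the stability term $F(t_i,X_h(t_i))-F(t_i,X_h^i)$, which is Lipschitz in $e^i$.
\end{itemize}
The third term is the hard one. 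A crude Lipschitz bound only gives the Hölder exponent of $X_h$, namely $\min(1/2,\cdot)$, which is insufficient for $\beta>1$. Following Wang~\cite{MR3649432}, we therefore Taylor-expand in the second variable:
\[
F(t_i,X_h(\sigma))-F(t_i,X_h(t_i)) = F'(t_i,X_h(t_i))\bigl(X_h(\sigma)-X_h(t_i)\bigr) + R,
\]
where $R$ is bounded via $F''$ in $\dot H^{-\vartheta}$. The smoothing $\|A_h^{\vartheta/2}S_{h}(\cdot)\|$ is integrable since $\vartheta<2$, so $R$ is of order $\|X_h(\sigma)-X_h(t_i)\|_{L^{2p}}^2$, which is $k^{\min(1,\cdot)}$ up to weights. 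In the linear term we insert the mild representation of $X_h(\sigma)-X_h(t_i)$. The semigroup and drift parts are handled with the singular increment bound. For the stochastic-integral part, $F'(t_i,X_h(t_i))$ is $\mathcal F_{t_i}$-measurable, so the sum over $i$ of $\int_{t_i}^{t_{i+1}}\int_{t_i}^{\sigma}\dots\,dW\,d\sigma$ is a sum of martingale increments. A discrete BDG argument with stochastic Fubini then gives orthogonality across $i$ and yields order $k^{1/2}\cdot k^{1/2}$-type cancellation, giving $k^{\beta/2}$. Keeping track of the singular weights $t_i^{-(\beta-\nu)/2}$ inside the sums requires care near $i=0$. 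There we use $\sum_i k\,t_i^{-\alpha}(t_n-t_i)^{-\gamma}\le C t_n^{1-\alpha-\gamma}$ with $\alpha,\gamma<1$, and treat the first step $i=0$ separately using only $H$-bounds.

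\textbf{Step 3: conclude.} Collecting the pieces yields
\[
\|e^n\|_{L^p}\le Ck^{\beta/2}t_n^{-(\beta-\nu)/2}(1+\|X_0\|) + Ck\sum_{i<n}\|e^i\|_{L^p}.
\]
Since $e^0=0$, a discrete Gronwall lemma with a weakly singular weight gives $\|e^n\|_{L^p}\le Ck^{\beta/2}t_n^{-(\beta-\nu)/2}(1+\|X_0\|_{L^p(\Omega;\dot H^\nu)})$, because $(\beta-\nu)/2<1$. Adding the spatial bound from Theorem~\ref{thm:strong_convergence_of_semi_discrete_scheme} gives \eqref{eq:main-convergence}.
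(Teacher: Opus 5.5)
\textbf{Comparison.} You split the error through $X_h(t_n)$, reuse Theorem~\ref{thm:strong_convergence_of_semi_discrete_scheme}, and then analyse the purely temporal error. The paper instead compares $X(t_n)$ with $X_h^n$ directly, using only the continuous regularity of Theorem~\ref{thm:singular_regularity} and the combined space--time estimates of Lemma~\ref{lem:fully_discrete_error_estimates}. Your route additionally needs an $h$-uniform regularity theory for $X_h$. It is viable in principle, but as written two steps fail at $\beta=2$, which is exactly the trace-class case giving $O(h^2+k)$.

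\textbf{Gap 1: logarithmic losses at $\beta=2$.} In the drift, the operator error $(S_h(t_n-\sigma)-S_{h,k}^{n-i})P_hF(t_i,X_h(t_i))$, with $F$ only bounded in $H$, yields $k|\log k|$, as you say. The ``smoothing trade'' $\|S_h(\tau)-S_{h,k}^{j}\|_{\mathcal L(H)}\le Ck^{\beta/2}\tau^{-\beta/2}$ is not integrable when $\beta=2$. Without using the time regularity of $i\mapsto F(t_i,X_h(t_i))$, a logarithmic loss cannot be avoided in general.

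Your operator-norm square-function bound $\int_0^{t_n}\|(S_h(t_n-\sigma)-S_{h,k}^{n-i})A_h^{(1-\beta)/2}\|_{\mathcal L(H)}^2\,d\sigma\le Ck^{\beta}$ is also false at $\beta=2$. For $\tau\in[t_{j-1},t_j]$ the supremum over spectral values $\lambda\sim\tau^{-1}$ of $\lambda^{-1/2}|e^{-\tau\lambda}-(1+k\lambda)^{-j}|$ is of size $k\tau^{-1/2}$. Hence the integral is of order $k^2|\log k|$ once $h^2\lesssim k$. The sharp $k^\beta$ bound holds only for a fixed vector, as in Lemma~\ref{lem:fully_discrete_error_estimates}(iii), whereas your $G(t_i)$ varies with $i$.

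The missing idea is the freezing device the paper uses for $J_{F2}$ and $J_{G2}$:
\begin{itemize}
\item replace $F(t_{n-j},X(t_{n-j}))$ and $G(t_{n-j})$ by their values at the single time $t_{n-1}$;
\item apply the fixed-vector estimates, Lemma~\ref{lem:fully_discrete_error_estimates}(ii) with $\rho=0$ and (iii) with $\rho=\beta$;
\item control the differences by the pointwise estimate.
\end{itemize}
The pointwise estimate has a $\tau^{-\beta/2}$ singularity (resp.\ $\tau^{-1}$ after squaring). This is compensated by $\|X(t_{n-1})-X(t_{n-j})\|_{L^p(\Omega;H)}\le Ct_{n-j}^{-\frac{\beta-\nu}{2}}t_{j-1}^{\frac{\min(\beta,1)}{2}}$ (resp.\ $\|G(t_{n-1})-G(t_{n-j})\|_{\mathrm{HS}(U_0,\dot H^{\beta-1})}\le Ct_{j-1}^{\delta}$), together with $t_{j-1}\le\tau$. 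You would need the same device at the semidiscrete level, and again inside your $h$-uniform bound for $\|A_hX_h(t)\|_{L^p(\Omega;H)}$.

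\textbf{Gap 2: the Taylor remainder when $\vartheta\in(1,2)$.} Because you Taylor-expand at the semidiscrete level, the remainder meets $S_h(t_n-\sigma)P_h$. You then need $\|S_h(t)P_hv\|_H\le Ct^{-\vartheta/2}\|v\|_{\dot H^{-\vartheta}}$ uniformly in $h$. This is not available, because $P_h$ is not stable from $\dot H^{-\vartheta}$ into the discrete norm $\|A_h^{-\vartheta/2}\cdot\|_H$ for $\vartheta>1$. For example, $A_h^{-1}P_h=R_hA^{-1}$, and the Ritz projection $R_h$ is not $L^2$-stable. One only has $\|A_h^{-\vartheta/2}P_hv\|_H\le C(\|v\|_{\dot H^{-\vartheta}}+h^{\vartheta}\|v\|_H)$. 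The extra term must then be absorbed using $\|R\|_H\le C\|X_h(\sigma)-X_h(t_i)\|_H$ and Young's inequality.

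The paper sidesteps this by Taylor-expanding along the continuous solution with the continuous semigroup in $J_{F1}$. There the $\dot H^{-\vartheta}$ bound on $F''$ only meets $\|A^{\vartheta/2}S(t)\|_{\mathcal L(H)}$. Discrete operators act only on $H$-valued data, in $J_{F2}$ and $J_{F3}$.

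\textbf{Minor: the increment bound.} Your preliminary bound $\|X_h(t)-X_h(s)\|_{L^p(\Omega;H)}\le C(t-s)^{\beta/2}s^{-(\beta-\nu)/2}$ is false for $\beta>1$. The stochastic convolution caps the H\"older exponent at $\min(\beta,1)/2$ (cf.\ Theorem~\ref{thm:singular_regularity}(ii)). You use the correct exponent later, but your discrete regularity lemma must carry the extra $(t-s)^{\min(\beta,1)/2}$ term.
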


The proof of this theorem requires substantial preliminary estimates and is deferred to Subsection~\ref{subsec:proof_of_fully_discrete_result}

%%%%%%%%%%%%%%%%%%%%%%%%%%%%%%%%%%%%%%%%%%%%%%%%%%%%%%%%%%
\section{Regularity results}\label{sec:regularity_results}
%%%%%%%%%%%%%%%%%%%%%%%%%%%%%%%%%%%%%%%%%%%%%%%%%%%%%%%%%%

This section is devoted to the spatial and temporal regularity of the mild solution. The following lemmas are crucial for establishing the regularity results. The first lemma collects several smoothing properties of the analytic $\mathcal{C}_0$-semigroup that will be essential throughout the article.

\begin{lemma}\label{lem:semigroup_estimates}
Let $\gamma^+ \coloneqq \max\{0,\gamma\}$ denote the positive part of $\gamma$, and let the operator $A$ and the corresponding analytic semigroup $\{S(t)\}_{t\ge 0}$ be as defined in Section~\ref{sec:introduction}. Then the following estimates hold:
\begin{enumerate}[label={\upshape(\roman*)}]
    \item For any $\gamma \in \mathbb{R}$, there exists a constant $C > 0$, depending on $\gamma$, such that
    \[
    \|A^{\gamma}S(t)\|_{\mathcal{L}(H)} \le C\, t^{-\gamma^+}, \quad t > 0.
    \]
    
    \item For any $\rho \in [0,1]$, there exists a constant $C > 0$, depending on $\rho$, such that
    \[
    \|A^{-\rho}(I-S(t))\|_{\mathcal{L}(H)} \le C\, t^{\rho}, \quad t > 0.
    \]
    
    \item For any $\rho \in [0,1]$, there exists a constant $C > 0$, depending on $\rho$, such that
    \[
    \int_{s}^{t} \|A^{\frac{\rho}{2}} S(t-\sigma)v\|_H^2 \,d\sigma \leq C \,(t - s)^{1-\rho} \|v\|_H^2 \quad \text{for all } v \in H, \; 0 \leq s < t.
    \]
    
    \item For any $\rho \in [0,1]$, there exists a constant $C > 0$, depending on $\rho$, such that
    \[
    \left\|A^\rho \int_{s}^{t} S(t-\sigma)v \, d\sigma \right\|_H \leq C \, (t - s)^{1-\rho} \|v\|_H \quad \text{for all } v \in H, \; 0 \leq s < t.
    \]   
\end{enumerate}
\end{lemma}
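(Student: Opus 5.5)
The plan is to prove all four estimates by diagonalizing $A$ and reducing each one to an elementary scalar inequality. Let $\{(\lambda_j,e_j)\}_{j\ge1}$ be the orthonormal eigenbasis of $A$ from Section~\ref{sec:introduction}, with $0<\lambda_1\le\lambda_2\le\cdots$. Then $A^\gamma S(t)e_j=\lambda_j^\gamma e^{-t\lambda_j}e_j$, and the operator norm of any function $f(A)$ equals $\sup_j|f(\lambda_j)|\le\sup_{\lambda\ge\lambda_1}|f(\lambda)|$.

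\textbf{(i)} Split according to the sign of $\gamma$.
\begin{itemize}
\item If $\gamma\le0$, then $\lambda^\gamma e^{-t\lambda}\le\lambda_1^{\gamma}$, so the bound holds with $C=\lambda_1^\gamma$ and $t^{-\gamma^+}=1$.
\item If $\gamma>0$, write $\lambda^\gamma e^{-t\lambda}=t^{-\gamma}(t\lambda)^\gamma e^{-t\lambda}$ and use $\sup_{x>0}x^\gamma e^{-x}=(\gamma/e)^\gamma$.
\end{itemize}

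\textbf{(ii)} We need $\lambda^{-\rho}(1-e^{-t\lambda})\le C t^\rho$. Put $x=t\lambda$; this becomes $x^{-\rho}(1-e^{-x})\le C$ for $x>0$. For $x\le1$ use $1-e^{-x}\le x$, giving $x^{1-\rho}\le1$. For $x\ge1$ the quantity is at most $x^{-\rho}\le1$. Hence $C=1$ suffices.

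\textbf{(iii)} Expand $v=\sum_j v_je_j$ and use Parseval:
\[
\int_s^t\|A^{\rho/2}S(t-\sigma)v\|_H^2\,d\sigma=\sum_j\lambda_j^\rho v_j^2\int_0^{t-s}e^{-2\lambda_j r}\,dr=\sum_j v_j^2\,\lambda_j^{\rho-1}\frac{1-e^{-2\lambda_j(t-s)}}{2}.
\]
It therefore suffices to show $\lambda^{\rho-1}(1-e^{-2\lambda\tau})\le C\tau^{1-\rho}$ with $\tau=t-s$. This is exactly (ii) applied with exponent $1-\rho\in[0,1]$ and time $2\tau$.

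\textbf{(iv)} Similarly,
\[
A^\rho\int_s^tS(t-\sigma)v\,d\sigma=\sum_j\lambda_j^{\rho-1}(1-e^{-\lambda_j\tau})v_je_j .
\]
By (ii) with exponent $1-\rho$, each coefficient is bounded by $C\tau^{1-\rho}$, and Parseval gives the claim. An equivalent operator form is $A^\rho\int_0^\tau S(r)\,dr=A^{-(1-\rho)}(I-S(\tau))$.

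None of the steps is a genuine obstacle. The only care needed is at the endpoint $\rho=1$: in (iii) the bound is a constant, and in (iv) the operator reduces to $I-S(\tau)$, which has norm at most $1$. The scalar inequality handles both cases uniformly since $x^0(1-e^{-x})\le1$.
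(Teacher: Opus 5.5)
Your proof is correct, and it takes a genuinely different route from the paper, which gives no argument of its own: it imports (i) from \cite[Lemma~2.1(i)]{NaikTripathi2026} and (ii)--(iv) from Kruse~\cite[Lemma~2.5]{MR3168284}.

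\textbf{What your route buys.} Your diagonalization reduces everything to two scalar facts: $\sup_{x>0}x^\gamma e^{-x}=(\gamma/e)^\gamma$, and $x^{-\rho}(1-e^{-x})\le 1$ for $x>0$, $\rho\in[0,1]$. This gives explicit constants: $C=\lambda_1^{\gamma}$ or $(\gamma/e)^\gamma$ in (i), and $C=1$ in (ii)--(iv). It also makes visible that the bounds hold uniformly for all $t>0$, as the statement requires, with no restriction $t\le T$.

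\textbf{The structural point.} The key step is your observation that (iii) and (iv) are just (ii) with exponent $1-\rho$. In operator form, the identities
$A^\rho\int_0^\tau S(r)\,dr=A^{-(1-\rho)}(I-S(\tau))$ and $\int_0^\tau\|A^{\rho/2}S(r)v\|_H^2\,dr=\tfrac12\bigl(A^{-(1-\rho)}(I-S(2\tau))v,v\bigr)_H$
hold by functional calculus for any positive definite self-adjoint $A$. So the eigenbasis, which is available here because of the compact inverse in Assumption~\ref{ass:operator_A}, is a convenience rather than a necessity. Positive definiteness does enter, through $\lambda_1>0$, in the case $\gamma\le 0$ of (i).

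\textbf{What the paper's route buys.} Citing costs nothing on the page. However, it leaves the reader to check that the hypotheses of the cited lemmas match the present setting, which your self-contained argument makes unnecessary.
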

\begin{proof}
    The proof of (i) can be found in \cite[Lemma~2.1(i)]{NaikTripathi2026}, while the proofs for (ii)--(iv) are provided in \cite[Lemma~2.5]{MR3168284}.
\end{proof}

\begin{lemma} \label{lem:stochastic_convolution_regularity}
Let $0 < \beta \le 2$. Suppose Assumptions~\ref{ass:operator_A} and \ref{ass:diffusion} hold. Then for $0 \leq s < t \leq T$, we have
\[
\int_{s}^{t} \bigl\| A^{\frac{\alpha}{2}} S(t - \sigma) G(\sigma) \bigr\|_{\mathrm{HS}(U_0, H)}^2 \, d\sigma \leq C (t - s)^{\min(\beta - \alpha, 1)}, \quad \alpha \in [0, \beta].
\]
\end{lemma}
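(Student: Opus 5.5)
The plan is to move the regularity of $G$ onto the semigroup and then apply Lemma~\ref{lem:semigroup_estimates}. For each $\sigma$, write
\[
A^{\frac{\alpha}{2}} S(t-\sigma) G(\sigma) = A^{\frac{\alpha-\beta+1}{2}} S(t-\sigma)\, A^{\frac{\beta-1}{2}} G(\sigma).
\]
The ideal property of Hilbert--Schmidt operators then gives
\[
\bigl\| A^{\frac{\alpha}{2}} S(t-\sigma) G(\sigma) \bigr\|_{\mathrm{HS}(U_0,H)} \le \bigl\|A^{\frac{\alpha-\beta+1}{2}} S(t-\sigma)\bigr\|_{\mathcal{L}(H)} \,\|G(\sigma)\|_{\mathrm{HS}(U_0,\dot H^{\beta-1})}.
\]
By Assumption~\ref{ass:diffusion} the last factor is bounded by $C$ uniformly in $\sigma$. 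It therefore remains to integrate the square of the operator norm over $(s,t)$.

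\textbf{Estimate of the operator norm.} Set $\gamma \coloneqq \frac{1+\alpha-\beta}{2}$. Since $\alpha\in[0,\beta]$ and $\beta\in(0,2]$, we have $\gamma \le \tfrac12$. Lemma~\ref{lem:semigroup_estimates}(i) gives $\|A^{\gamma}S(t-\sigma)\|_{\mathcal{L}(H)} \le C(t-\sigma)^{-\gamma^+}$. We split into two cases.

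\emph{Case $\beta-\alpha\ge 1$.} Then $\gamma\le 0$, so the integrand is bounded by a constant. The integral is bounded by $C(t-s)$, and $1=\min(\beta-\alpha,1)$ in this case.

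\emph{Case $\beta-\alpha<1$.} Then $\gamma\in(0,\tfrac12]$, and
\[
\int_s^t (t-\sigma)^{-(1+\alpha-\beta)}\,d\sigma = \frac{(t-s)^{\beta-\alpha}}{\beta-\alpha},
\]
with $\beta-\alpha=\min(\beta-\alpha,1)$.

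\textbf{Main obstacle.} The only delicate point is the endpoint $\alpha=\beta$. There the integrand behaves like $(t-\sigma)^{-1}$ and the integral diverges, so the approach above breaks down. To handle it, I would not use the pointwise bound (i). Instead, I would apply Lemma~\ref{lem:semigroup_estimates}(iii) with $\rho=1$ to each basis element, i.e. expand the Hilbert--Schmidt norm over an orthonormal basis $\{\psi_m\}$ of $U_0$. This requires $G$ to be frozen in time: Lemma~\ref{lem:semigroup_estimates}(iii) applies to a fixed vector $v$.

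So I would split $G(\sigma)=G(t)+(G(\sigma)-G(t))$ and treat the two parts separately.

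\emph{Frozen part.} Part (iii) with $\rho=1$ and $v=A^{\frac{\beta-1}{2}}G(t)\psi_m$, summed over $m$, gives a bound $C(t-s)^0=C$ times $\|G(t)\|^2_{\mathrm{HS}(U_0,\dot H^{\beta-1})}$.

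\emph{Remainder.} The Hölder condition with $\delta\ge\beta/2$ yields an integrand bounded by $(t-\sigma)^{-1+2\delta}$. This is integrable and bounded by $C(t-s)^{2\delta}$, which is at most $C$ since $(t-s)\le T$.

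Since $(t-s)^{0}=1$ is exactly the claimed rate at $\alpha=\beta$, this completes the endpoint case. The same splitting could alternatively handle the case near $\beta-\alpha$ small, but the constant $1/(\beta-\alpha)$ is harmless for fixed $\alpha$, so the direct argument suffices for $\alpha<\beta$.
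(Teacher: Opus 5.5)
Your argument is correct, but it is organized differently from the paper's.

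\textbf{How the two routes differ.} The paper splits $G(\sigma)=(G(\sigma)-G(t))+G(t)$ for every $\alpha\in[0,\beta]$. It then bounds both resulting integrals with the pointwise estimate of Lemma~\ref{lem:semigroup_estimates}(i). You instead apply (i) directly for $\alpha<\beta$, using only the uniform bound on $\|G(\sigma)\|_{\mathrm{HS}(U_0,\dot H^{\beta-1})}$, so the H\"older condition is not needed there. You reserve the splitting for $\alpha=\beta$. There you treat the frozen part with the square-function estimate Lemma~\ref{lem:semigroup_estimates}(iii) with $\rho=1$, applied to $v=A^{\frac{\beta-1}{2}}G(t)\psi_m$ and summed over $m$.

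\textbf{The endpoint $\alpha=\beta$.} This is the substantive difference. In the paper, the frozen term $I_2$ at $\alpha=\beta$ is bounded via (i) by $C\int_s^t(t-\sigma)^{-1}\,d\sigma$, which diverges. No argument through $\|A^{1/2}S(t-\sigma)\|_{\mathcal L(H)}$ can succeed there, since $\int_0^{\varepsilon}\|A^{1/2}S(\tau)\|_{\mathcal L(H)}^2\,d\tau=\infty$ for unbounded $A$. Only the remainder $I_1$ survives, because of the extra factor $(t-\sigma)^{2\delta}$. The case $\alpha=\beta$ is exactly the one used to bound $J_3$ in the proof of Theorem~\ref{thm:singular_regularity}(i). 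So your use of (iii) is what actually establishes the lemma at its endpoint.

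\textbf{The case $\alpha<\beta$.} Both routes give a constant of order $1/(\beta-\alpha)$, which is harmless since $C$ may depend on $\alpha$. As you remark, the same splitting with (iii), now with $\rho=1+\alpha-\beta$, would also work for $\alpha$ close to $\beta$.
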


\begin{proof}
The proof follows the approach of \cite[Lemma~2.3]{MR3327065}. By applying the triangle inequality, we separate the integral into two components:
\begin{align}
\int_s^t \bigl\|A^{\frac{\alpha}{2}} S(t-\sigma) G(\sigma)\bigr\|_{\mathrm{HS}(U_0, H)}^2 \, d\sigma 
& \leq 2 \int_s^t \bigl\|A^{\frac{\alpha}{2}} S(t-\sigma)\bigl(G(\sigma)-G(t)\bigr)\bigr\|_{\mathrm{HS}(U_0, H)}^2 \, d\sigma \notag \\
   &\quad + 2 \int_s^t \bigl\|A^{\frac{\alpha}{2}} S(t-\sigma) G(t)\bigr\|_{\mathrm{HS}(U_0, H)}^2 \, d\sigma \notag \\
& \eqqcolon 2 (I_1 + I_2). \label{eq:stoc_conv_split}
\end{align}
To bound these terms, we use the property of Hilbert--Schmidt operators, $\|L M\|_{\mathrm{HS}(U_0, H)} \le \|L\|_{\mathcal{L}(H)} \|M\|_{\mathrm{HS}(U_0, H)}$. For $I_1$, this yields
\begin{align*}
    I_1 &\le \int_s^t \bigl\|A^{\frac{1+\alpha-\beta}{2}} S(t-\sigma)\bigr\|_{\mathcal{L}(H)}^2 \bigl\|A^{\frac{\beta-1}{2}}\bigl(G(\sigma)-G(t)\bigr)\bigr\|_{\mathrm{HS}(U_0, H)}^2 \, d\sigma \\
        &= \int_s^t \bigl\|A^{\frac{1+\alpha-\beta}{2}} S(t-\sigma)\bigr\|_{\mathcal{L}(H)}^2 \bigl\|G(\sigma)-G(t)\bigr\|_{\mathrm{HS}(U_0, \dot{H}^{\beta-1})}^2 \, d\sigma.
\end{align*}
Applying Lemma~\ref{lem:semigroup_estimates}(i), along with Assumption~\ref{ass:diffusion}, we obtain
\begin{equation}\label{eq:bound_I1}
    I_1 \le C \int_s^t (t-\sigma)^{-(1+\alpha-\beta)^+ + 2\delta} \, d\sigma = C (t-s)^{1 - (1+\alpha-\beta)^+ + 2\delta} = C (t-s)^{\min(\beta-\alpha, 1) + 2\delta}.
\end{equation}
An identical operator decomposition applied to $I_2$, combined with Lemma~\ref{lem:semigroup_estimates}(i) and  Assumption~\ref{ass:diffusion}, yields
\begin{equation}\label{eq:bound_I2}
    I_2 \le \int_s^t \bigl\|A^{\frac{1+\alpha-\beta}{2}} S(t-\sigma)\bigr\|_{\mathcal{L}(H)}^2 \bigl\|G(t)\bigr\|_{\mathrm{HS}(U_0, \dot{H}^{\beta-1})}^2 \, d\sigma \le C (t-s)^{\min(\beta-\alpha, 1)}.
\end{equation}
Since Assumption~\ref{ass:diffusion} dictates that $\delta \ge \beta/2 > 0$, substituting the bounds \eqref{eq:bound_I1} and \eqref{eq:bound_I2} into \eqref{eq:stoc_conv_split} yields
\begin{equation*}
    \int_s^t \bigl\|A^{\frac{\alpha}{2}} S(t-\sigma) G(\sigma)\bigr\|_{\mathrm{HS}(U_0, H)}^2 \, d\sigma \le C \bigl( (t-s)^{\min(\beta-\alpha, 1) + 2\delta} + (t-s)^{\min(\beta-\alpha, 1)} \bigr) \le C (t-s)^{\min(\beta-\alpha, 1)},
\end{equation*}
which completes the proof.
\end{proof}
%%

%%%%%%%%%%%%
In the following theorem, we establish weighted spatial and temporal regularity results for the mild solution for $t>0$. These estimates are essential for our subsequent numerical error analysis. 

\begin{theorem}\label{thm:singular_regularity}
Suppose Assumptions~\ref{ass:operator_A}--\ref{ass:drift} hold and $p \ge 2$. Let $\beta \in (0,2]$, $\mu \in (0,2]$, and $\nu \coloneqq \min(\beta, \mu)$. Then the solution $X(t)$ satisfies the following regularity properties:

\begin{enumerate}[label={\upshape(\roman*)}]
    \item \textbf{Spatial regularity:} 
    There exists a constant $C > 0$ such that, for all $t > 0$,
    \begin{equation}\label{eq:weighted_spatial_regularity}
        \|X(t)\|_{L^{p}(\Omega;\dot H^{\beta})}
        \leq
            C \, t^{-\frac{\beta - \nu}{2}} \bigl( 1 + \|X_{0}\|_{L^{p}(\Omega;\dot H^{\nu})} \bigr).
    \end{equation}

    \item \textbf{Temporal regularity:}
    There exists a constant $C > 0$ such that for $0 < s \le t \leq T$,\begin{equation}\label{eq:weighted_temporal_regularity}
        \|X(t) - X(s)\|_{L^p(\Omega;H)} 
        \leq 
            C \left( (t-s)^{\frac{\beta}{2}} \, s^{-\frac{\beta -\nu}{2}} + (t-s)^{\frac{\min(\beta, 1)}{2}} \right) \bigl(1 + \|X_{0}\|_{L^{p}(\Omega;\dot H^{\nu})}\bigr).
    \end{equation}
\end{enumerate}
\end{theorem}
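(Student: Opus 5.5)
We work directly with the mild formulation \eqref{eq:mild_solution_SEE} and estimate its three terms separately: the initial part $S(t)X_0$, the drift convolution $\int_0^t S(t-\sigma)F(\sigma,X(\sigma))\,d\sigma$, and the stochastic convolution $\int_0^t S(t-\sigma)G(\sigma)\,dW(\sigma)$.

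\textbf{Spatial regularity (i).} For the initial term we write $A^{\beta/2}S(t)X_0 = A^{\frac{\beta-\nu}{2}}S(t)\,A^{\nu/2}X_0$. Lemma~\ref{lem:semigroup_estimates}(i) then gives $\|S(t)X_0\|_{L^p(\Omega;\dot H^\beta)}\le C t^{-\frac{\beta-\nu}{2}}\|X_0\|_{L^p(\Omega;\dot H^\nu)}$.

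The stochastic convolution is handled with Lemma~\ref{lem:BDG} and Lemma~\ref{lem:stochastic_convolution_regularity} with $\alpha=\beta$, $s=0$. This gives a bound $C\,t^{\min(0,1)/2}=C$, which is uniform in $t$.

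The drift convolution is the delicate term when $\beta=2$, because Lemma~\ref{lem:semigroup_estimates}(iv) only reaches $\rho<1$ for a merely bounded integrand. For $\beta<2$ we simply use $\|A^{\beta/2}S(t-\sigma)\|\le C(t-\sigma)^{-\beta/2}$, which is integrable, together with Corollary~\ref{cor:drift_bound}.

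For $\beta=2$ we use the standard splitting
\[
\int_0^t S(t-\sigma)F(\sigma,X(\sigma))\,d\sigma=\int_0^t S(t-\sigma)\bigl(F(\sigma,X(\sigma))-F(t,X(t))\bigr)d\sigma+\int_0^t S(t-\sigma)F(t,X(t))\,d\sigma .
\]
The second piece is controlled by Lemma~\ref{lem:semigroup_estimates}(iv) with $\rho=1$. For the first piece, Assumption~\ref{ass:drift} bounds the difference by $C\bigl((t-\sigma)^{\beta/2}+\|X(t)-X(\sigma)\|_H\bigr)$. We insert the H\"older bound \eqref{eq:base_temporal_regularity} with some $\tilde\delta>0$, so that $\int_0^t (t-\sigma)^{-1+\tilde\delta}d\sigma<\infty$. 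Strictly, \eqref{eq:base_temporal_regularity} has a constant depending on $X_0$. To obtain linear dependence we would instead use a preliminary version of (ii) with a small exponent and a singular weight in $\sigma$, namely $\|X(t)-X(\sigma)\|\le C(t-\sigma)^{\epsilon}\sigma^{-\epsilon}(1+\|X_0\|_{\dot H^\nu})$. This follows from $\|(I-S(t-\sigma))S(\sigma)X_0\|\le C(t-\sigma)^\epsilon\sigma^{-\epsilon}\|X_0\|$ and the same splitting as in (ii) below. The resulting Beta-type integral $\int_0^t(t-\sigma)^{-1+\epsilon}\sigma^{-\epsilon}d\sigma$ is finite. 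This step, keeping the constants linear in the data while handling $\beta=2$, is the main obstacle.

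\textbf{Temporal regularity (ii).} For $0<s\le t$ we decompose
\[
X(t)-X(s)=(S(t-s)-I)X(s)+\int_s^t S(t-\sigma)F(\sigma,X(\sigma))\,d\sigma+\int_s^t S(t-\sigma)G(\sigma)\,dW(\sigma).
\]
This follows from the semigroup property applied to the mild formulation at $s$.

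The first term is bounded by Lemma~\ref{lem:semigroup_estimates}(ii) with $\rho=\beta/2\le1$ and then part (i):
\[
\|A^{-\beta/2}(I-S(t-s))\|\,\|X(s)\|_{\dot H^\beta}\le C(t-s)^{\beta/2}s^{-\frac{\beta-\nu}{2}}(1+\|X_0\|_{L^p(\Omega;\dot H^\nu)}).
\]
The drift term is at most $C(t-s)$ by Corollary~\ref{cor:drift_bound}, and $(t-s)\le C(t-s)^{\min(\beta,1)/2}$.

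The stochastic term is handled by Lemma~\ref{lem:BDG} and Lemma~\ref{lem:stochastic_convolution_regularity} with $\alpha=0$. This gives $(t-s)^{\min(\beta,1)/2}$, and summing the three bounds yields \eqref{eq:weighted_temporal_regularity}.
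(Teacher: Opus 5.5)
Your proposal is correct and follows the paper's proof essentially step for step. In (i) you use the same three-term splitting of the mild solution, with the case distinction at $\beta=2$ handled by freezing $F(t,X(t))$ and invoking Lemma~\ref{lem:semigroup_estimates}(iv) with $\rho=1$; in (ii) you use the same decomposition of $X(t)-X(s)$, combined with Lemma~\ref{lem:semigroup_estimates}(ii), part (i), and Lemma~\ref{lem:stochastic_convolution_regularity}.

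The only deviation is in the $\beta=2$ drift term. The paper appeals directly to \eqref{eq:base_temporal_regularity}, whose constant depends on $X_0$ in an unspecified way, a point it passes over. You instead use the weighted bound $\|X(t)-X(\sigma)\|_{L^p(\Omega;H)}\le C(t-\sigma)^{\epsilon}\sigma^{-\epsilon}(1+\|X_0\|_{L^p(\Omega;H)})$. This bound is valid, for instance via the splitting in (ii) together with an $\dot H^{2\epsilon}$-bound of order $\sigma^{-\epsilon}$ on $X(\sigma)$, and it makes the linear dependence on the data rigorous.
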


\begin{proof} 
(i) By taking the $L^{p}(\Omega;\dot H^{\beta})$-norm to the mild solution~\eqref{eq:mild_solution_SEE} and using the triangle inequality, we obtain
\begin{align}
\|X(t)\|_{L^p(\Omega;\dot{H}^\beta)} 
&\leq \|S(t)X_0\|_{L^p(\Omega;\dot{H}^\beta)} 
+ \left\| \int_0^t S(t-\sigma) F(\sigma,X(\sigma)) \,d\sigma\right\|_{L^p(\Omega;\dot{H}^\beta)} \notag \\
&\quad + \left\|\int_0^t S(t-\sigma) G(\sigma)\, dW(\sigma)\right\|_{L^p(\Omega;\dot{H}^\beta)} \notag \\
& \eqqcolon J_1 + J_2 + J_3. \label{eq:spatial_split}
\end{align}
For the first term, applying Lemma~\ref{lem:semigroup_estimates}(i), we obtain
\begin{equation}\label{eq:bound_J1}
    J_1 = \bigl\|A^{\frac{\beta-\nu}{2}}S(t) A^{\frac{\nu}{2}}X_0\bigr\|_{L^p(\Omega;H)}
    \leq C \, t^{-\frac{\beta-\nu}{2}} \|X_0\|_{L^p(\Omega;\dot{H}^\nu)}.
\end{equation}
Using Corollary~\ref{cor:drift_bound} and Lemma~\ref{lem:semigroup_estimates}, we estimate $J_2$ by considering two cases:

\textbf{Case 1 ($\beta \in (0,2)$):} Applying Lemma~\ref{lem:semigroup_estimates}(i) with $\gamma = \frac{\beta}{2} < 1$, we have
\begin{align}
    J_2 &\le \int_{0}^{t} \bigl\|A^{\frac{\beta}{2}}S(t-\sigma)\bigr\|_{\mathcal{L}(H)} \, \|F(\sigma,X(\sigma))\|_{L^{p}(\Omega;H)}\,d\sigma \notag \\
    &\le C\bigl(1+\|X_{0}\|_{L^{p}(\Omega;H)}\bigr) \int_{0}^{t}(t-\sigma)^{-\frac{\beta}{2}}\,d\sigma \notag \\
    &\le C\bigl(1+\|X_{0}\|_{L^{p}(\Omega;H)}\bigr). \label{eq:bound_J2_case1}
\end{align}

\textbf{Case 2 ($\beta=2$):} In this regime, $\mu \in (0,2]$. By rewriting the integral and using Lemma~\ref{lem:semigroup_estimates}(i) and (iv) alongside Assumption~\ref{ass:drift}, we obtain
\begin{align}
    J_2 &= \left\| A\int_0^t S(t-\sigma) F(\sigma,X(\sigma)) \,d\sigma\right\|_{L^p(\Omega;H)} \notag \\
    &\le \left\| \int_0^t A S(t-\sigma) \bigl(F(\sigma,X(\sigma)) - F(t,X(t))\bigr) \,d\sigma\right\|_{L^p(\Omega;H)}  
    + \left\| A\int_0^t S(t-\sigma) F(t,X(t)) \,d\sigma\right\|_{L^p(\Omega;H)} \notag \\
    &\le \int_0^t \bigl\|A S(t-\sigma) \bigr\|_{\mathcal{L}(H)} \|F(\sigma,X(\sigma)) - F(t,X(t))\|_{L^p(\Omega;H)} \,d\sigma 
    + C \bigl\|F(t,X(t))\bigr\|_{L^p(\Omega;H)} \notag \\
    &\le C \int_0^t (t-\sigma)^{-1} \bigl((t-\sigma) + \|X(t) - X(\sigma)\|_{L^p(\Omega;H)}\bigr) \, d\sigma + C \bigl(1 + \|X_{0}\|_{L^{p}(\Omega;H)}\bigr). \label{eq:J2_case2_intermediate}
\end{align}
Applying \eqref{eq:base_temporal_regularity} with an exponent $\tilde{\delta} \in \bigl(0, \frac{1}{2}\min(1, \mu)\bigr)$ to \eqref{eq:J2_case2_intermediate}, we obtain
\begin{align}
    J_2 &\le C \int_0^t (t-\sigma)^{-1} (t-\sigma) \, d\sigma + C \int_0^t (t-\sigma)^{-1} (t-\sigma)^{\tilde{\delta}} \, d\sigma + C \bigl(1 + \|X_{0}\|_{L^{p}(\Omega;H)}\bigr) \notag \\
    &\le C \bigl(1 + \|X_{0}\|_{L^{p}(\Omega;H)}\bigr) + C \int_0^t (t-\sigma)^{-1+\tilde{\delta}} \, d\sigma  \notag \\
    &\le C \bigl(1 + \|X_{0}\|_{L^{p}(\Omega;H)}\bigr). \label{eq:bound_J2_case2}
\end{align}

\noindent Finally, applying Lemma~\ref{lem:BDG} (Burkholder--Davis--Gundy inequality) together with Lemma~\ref{lem:stochastic_convolution_regularity} yields
\begin{equation}\label{eq:bound_J3}
    J_3 \le C \biggl(\int_{0}^{t} \bigl\|A^{\frac{\beta}{2}}S(t-\sigma)G(\sigma)\bigr\|_{\mathrm{HS}(U_0, H)}^{2}\,d\sigma\biggr)^{\!1/2} \le C.
\end{equation}
Substituting the estimate for $J_1$ \eqref{eq:bound_J1}, $J_2$ (from either \eqref{eq:bound_J2_case1} or \eqref{eq:bound_J2_case2}), and $J_3$ \eqref{eq:bound_J3} into \eqref{eq:spatial_split} confirms the bound \eqref{eq:weighted_spatial_regularity} and completes the proof of (i).

\medskip
\noindent(ii) To prove the temporal regularity, first note that for any $0 \le s \le t \le T$, 
\begin{align*}
X(t) - X(s) &= (S(t-s) - I) X(s) + \int_{s}^{t} S(t-\sigma)F(\sigma,X(\sigma))\,d\sigma + \int_{s}^{t} S(t-\sigma)G(\sigma)\,dW(\sigma).
\end{align*}
Now, for any $0 < s \le t \le T$, applying Lemma~\ref{lem:semigroup_estimates}(ii) and \eqref{eq:weighted_spatial_regularity}, Lemma~\ref{lem:BDG} (Burkholder--Davis--Gundy inequality), and Lemma~\ref{lem:stochastic_convolution_regularity} (with $\alpha = 0$), we obtain
\begin{align*}
\|X(t) - X(s)\|_{L^p(\Omega;H)} 
&\leq \bigl\| A^{-\frac{\beta}{2}} (S(t-s) - I) \bigr\|_{\mathcal{L}(H)} \|X(s)\|_{L^p(\Omega;\dot{H}^\beta)} 
  + \int_s^t \| S(t-\sigma) F(\sigma,X(\sigma)) \|_{L^p(\Omega;H)} \, d\sigma \\
 &\quad + C \biggl( \int_s^t \| S(t-\sigma) G(\sigma)\|_{\mathrm{HS}(U_0, H)}^2 \, d\sigma \biggr)^{\!1/2} \\
&\leq C \, (t-s)^{\frac{\beta}{2}} \, s^{-\frac{\beta -\nu}{2}}\bigl(1 + \|X_{0}\|_{L^{p}(\Omega;\dot H^{\nu})}\bigr) 
 + C \, (t-s) \bigl(1 + \|X_0\|_{L^p(\Omega;H)}\bigr) 
  + C \, (t-s)^{\frac{\min(\beta, 1)}{2}}\\
&\leq C \left( (t-s)^{\frac{\beta}{2}} \, s^{-\frac{\beta -\nu}{2}} + (t-s)^{\frac{\min(\beta, 1)}{2}} \right) \bigl(1 + \|X_{0}\|_{L^{p}(\Omega;\dot H^{\nu})}\bigr).
\end{align*}
This establishes \eqref{eq:weighted_temporal_regularity}, which completes the proof.
\end{proof}

\begin{remark}\label{rem:temporal_bound_consolidated}
For the purpose of subsequent error analysis, the temporal regularity estimate \eqref{eq:weighted_temporal_regularity} can be further simplified. Since $t \le T$, there exists a constant $C > 0$ such that for any $0 < s \le t \le T$,
\begin{equation}\label{eq:consolidated_temporal_bound}
    \|X(t) - X(s)\|_{L^p(\Omega;H)} \leq C \, s^{-\frac{\beta -\nu}{2}} (t-s)^{\frac{\min(\beta, 1)}{2}} \bigl(1 + \|X_{0}\|_{L^{p}(\Omega;\dot H^{\nu})}\bigr).
\end{equation}
\end{remark}

%%%%%%%%%%%%%%%%%%%%%%%%%%%%%%%%%%%%%%%%%%%%%%%%%%%%
\section{Deterministic error estimates}\label{sec:deterministic_error_estimates}
%%%%%%%%%%%%%%%%%%%%%%%%%%%%%%%%%%%%%%%%%%%%%%%%%%%%
Having established the continuous framework and numerical schemes, we now collect several auxiliary error estimates for the corresponding deterministic linear problem:
\begin{equation}\label{eq:deterministic}
    \frac{d}{dt} u(t) + Au(t) = 0, 
    \quad u(0) = v, \quad t\in(0,T].
\end{equation}
Since the operators $P_h, A_h, S_h$, and $S_{h,k}$ were defined in Section~\ref{sec:numerical_schemes_and_main_results}, we provide estimates of the approximation error by comparing the exact solution $u(t) = S(t)v$, the spatially semidiscrete solution $u_h(t) = S_h(t)P_h v$, and the fully discrete solution $U_h^n = S_{h,k}^n P_h v$.

We first present the error bounds for the spatial semidiscretization.

\begin{lemma}\label{lem:semidiscrete_error_estimates}
The following estimates hold for the semidiscrete approximation error.
\begin{enumerate}[label={\upshape(\roman*)}]
    \item Let $0 \leq \rho \leq 2$ and $-\min(1,2-\rho) \leq \eta \leq \rho$. Then, there exists a constant $C>0$ such that
    \[
        \|\bigl(S(t) - S_h(t)P_h\bigr)v\|_H \le C\,h^\rho\,t^{-\frac{\rho - \eta}{2}}\,\|v\|_{\dot H^{\eta}} \quad \text{for all } v \in \dot{H}^{\eta}, \; t > 0, \; h \in (0,1].
    \]
    \item Let $0 \le \rho \le 1$. Then, there exists a constant $C>0$ such that
    \[
        \left\| \int_0^t \bigl(S(\sigma) - S_h(\sigma)P_h\bigr)v \,d\sigma \right\|_H \le C\,h^{2-\rho}\,\|v\|_{\dot H^{-\rho}} \quad \text{for all } v \in \dot{H}^{-\rho}, \; t > 0, \; h \in (0,1].
    \]
    \item For $0 \le \rho \le 2$, there exists a constant $C>0$ such that
    \[
        \left(\int_0^t \|\bigl(S(\sigma) - S_h(\sigma)P_h\bigr)v\|_H^2\,d\sigma\right)^{\!1/2} \le C\,h^\rho\,\|v\|_{\dot H^{\rho-1}} \quad \text{for all } v \in \dot{H}^{\rho-1}, \; t > 0, \; h \in (0,1].
    \]
\end{enumerate}
\end{lemma}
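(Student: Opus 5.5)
These are classical deterministic finite element estimates for the homogeneous parabolic problem \eqref{eq:deterministic}, and I would prove them through the error operator $E_h(t) \coloneqq S(t) - S_h(t)P_h$. The main tool is the Ritz projection $R_h \colon \dot H^1 \to V_h$, defined by $(A^{1/2}R_h v, A^{1/2}y_h)_H = (A^{1/2}v, A^{1/2}y_h)_H$ for $y_h\in V_h$. Convexity of $\mathcal{O}$ gives elliptic regularity, and with it the approximation bound $\|(I-R_h)v\|_H \le Ch^{s}\|v\|_{\dot H^{s}}$ for $s\in[1,2]$. I would also use the identity $A_h R_h = P_h A$ on $\dot H^2$, stability of $P_h$ in $\dot H^{\eta}$ for $\eta\in[-1,1]$ (quasi-uniformity), and the discrete smoothing \eqref{eq:semidiscrete_smoothing}.

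For (i), I would treat the endpoint cases first and then interpolate, following Thom\'ee~\cite[Chapters~2--3]{MR2249024} and Kruse~\cite[Lemma~3.8]{MR3168284}.
\begin{itemize}
\item \emph{Smooth data, $\rho=\eta\in\{1,2\}$.} Split $E_h(t)v = (S(t)v - R_hS(t)v) + (R_hS(t)v - S_h(t)P_hv) \eqqcolon \rho_h + \theta_h$. The term $\theta_h$ satisfies $\theta_h' + A_h\theta_h = -P_h\rho_h'$. An energy argument, or Duhamel's formula combined with smoothing, bounds it by $Ch^2\|v\|_{\dot H^2}$.
\item \emph{Nonsmooth data, $\rho=2$, $\eta=0$.} Here the bound is $Ch^2t^{-1}\|v\|_H$. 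I would obtain it either from the weighted energy technique with multipliers $t\theta_h$ and $t^2\theta_h'$, or through the resolvent and Dunford--Taylor integral representation of $E_h(t)$, using $\|(z-A)^{-1} - (z-A_h)^{-1}P_h\|_{\mathcal L(H)} \le Ch^2$ uniformly on a sector.
\item \emph{Negative norms, $\eta<0$.} I would exploit self-adjointness and duality: $E_h(t)$ is self-adjoint on $H$ because $P_h$ is an orthogonal projector commuting appropriately. This gives $\|E_h(t)v\|_H = \sup_w (v, E_h(t)w)/\|w\|_H$, and bounding $\|E_h(t)w\|_{\dot H^{-\eta}}$ reduces the estimate to an $\dot H^{|\eta|}$-error bound. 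That bound comes from the $\dot H^1$-error estimate $\|E_h(t)w\|_{\dot H^1}\le Cht^{-1/2}\|w\|_H$, together with a splitting of $t$ into halves via the semigroup property.
\item \emph{Intermediate parameters.} The remaining values of $(\rho,\eta)$ follow by interpolating between the endpoint estimates, using the trivial bound $\|E_h(t)\|\le C$ for $\rho=\eta=0$. The restriction $\eta\ge-\min(1,2-\rho)$ is exactly what these endpoints permit.
\end{itemize}

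For (ii), set $e(t)=\int_0^t E_h(\sigma)v\,d\sigma$. Then $e(t) = A^{-1}(I-S(t))v - A_h^{-1}(I-S_h(t))P_hv$. Writing $A^{-1}-A_h^{-1}P_h = (I-R_h)A^{-1}$ gives $\|(A^{-1}-A_h^{-1}P_h)v\|_H\le Ch^{2-\rho}\|v\|_{\dot H^{-\rho}}$; this follows by duality and the Ritz bound with $A^{-1}v\in\dot H^{2-\rho}$. The remaining piece is $A^{-1}S(t)v - A_h^{-1}S_h(t)P_hv = A^{-1}E_h(t)v + (A^{-1}-A_h^{-1}P_h)S_h(t)P_hv$-type terms. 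I would handle it instead as $\int_t^\infty E_h(\sigma)v\,d\sigma$ style tails, bounded via (i) using the exponential decay coming from positive definiteness. This step is slightly delicate but routine.

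For (iii), I would follow the standard energy proof of Yan~\cite{MR2211047} and Kruse~\cite[Lemma~3.9]{MR3168284}. At $\rho=1$ (so $v\in H$) the argument tests the error equation and uses (ii)-type bounds for $\int_0^t E_h$. At $\rho=2$ ($v\in\dot H^1$) it uses (i) with $\rho=2$, $\eta=1$, which gives integrand $h^4\sigma^{-1}\|v\|_{\dot H^1}^2$. That integrand is not integrable, so this case needs the energy identity for $\rho_h,\theta_h$ rather than a pointwise bound. At $\rho=0$ ($v\in\dot H^{-1}$) the bound comes from $\int_0^t\|S(\sigma)v\|^2\le C\|v\|_{\dot H^{-1}}^2$ (Lemma~\ref{lem:semigroup_estimates}(iii)) together with the discrete analogue and $\dot H^{-1}$-stability of $P_h$. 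Interpolation in $\rho$ then finishes.

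The main obstacle is the low-regularity end of (i), namely the nonsmooth case $\eta\le0$ and especially $\eta=-1$. There I would rely on the resolvent/duality argument, which is the step I expect to need the most care.
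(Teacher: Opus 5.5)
\paragraph{Verdict.} The paper proves this lemma only by citation: Andersson et al.~\cite{MR3475837} and Kruse~\cite{MR3154916} for (i), Kruse~\cite[Lemma~4.2(i)]{MR3168284} for (ii), and Yan~\cite{MR2211047} with Kruse~\cite[Lemma~4.2(ii)]{MR3168284} for (iii). Your self-contained reconstruction therefore does more than the paper does, and the architecture of your (i) and the endpoint cases of (iii) is sound. However, there is a genuine gap in (ii).

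\paragraph{The gap in (ii).} It sits exactly at the step you call ``slightly delicate but routine''. Bounding the tail $\int_t^\infty E_h(\sigma)v\,d\sigma$ by integrating the pointwise estimate (i) cannot give a bound uniform in $t$. For $v\in\dot H^{-\rho}$, the only admissible choice in (i) that produces the factor $h^{2-\rho}$ is $(\rho',\eta)=(2-\rho,-\rho)$. This gives $\|E_h(\sigma)v\|_H\le Ch^{2-\rho}\sigma^{-1}\|v\|_{\dot H^{-\rho}}$, and $\sigma^{-1}$ is not integrable at $0$. Exponential decay only helps for large $\sigma$. Near $\sigma=0$ you get $h^{2-\rho}\log(1/t)$, and even switching to $\int_0^t\|E_h(\sigma)v\|_H\,d\sigma\le Ct^{1-\rho/2}\|v\|_{\dot H^{-\rho}}$ for $t\le h^2$ you only reach $Ch^{2-\rho}\log(1/h)$.

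This loss is intrinsic. Take $v$ spread evenly over dyadic spectral bands, say in 1D where the mode errors are orthogonal. Then $\int_0^1\|E_h(\sigma)v\|_H\,d\sigma\gtrsim h^2\sqrt{\log(1/h)}\,\|v\|_H$. So (ii) rests on cancellation in time, which no triangle inequality under the integral can see.

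\paragraph{The repair.} Commute $A^{-1}$ to the \emph{right}. Since $A^{-1}S(t)=S(t)A^{-1}$ and $A_h^{-1}S_h(t)P_h=S_h(t)P_hA_h^{-1}P_h$,
\[
A^{-1}S(t)v-A_h^{-1}S_h(t)P_hv=E_h(t)A^{-1}v+S_h(t)P_h\bigl(A^{-1}-A_h^{-1}P_h\bigr)v .
\]
The first term is bounded uniformly in $t$ by the smooth-data case $\rho'=\eta'=2-\rho$ of (i), that is, by $Ch^{2-\rho}\|A^{-1}v\|_{\dot H^{2-\rho}}=Ch^{2-\rho}\|v\|_{\dot H^{-\rho}}$. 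The second term is your elliptic bound for $(I-R_h)A^{-1}$. The splitting you actually wrote, with $A^{-1}E_h(t)v$ on the left, could also be made to work. But it needs a uniform negative-norm bound $\|E_h(t)v\|_{\dot H^{-2}}\le Ch^{2-\rho}\|v\|_{\dot H^{-\rho}}$, which is not among your ingredients.

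\paragraph{A false estimate in (i).} You use $\|E_h(t)w\|_{\dot H^1}\le Cht^{-1/2}\|w\|_H$, and this is false. Take an eigenfunction $w=e_j$ with $\lambda_j\approx t^{-1}$ and $t\ge h^2$. The $\dot H^1$ best-approximation error of $S(t)w$ from $V_h$ alone is of order $h\lambda_je^{-\lambda_jt}\approx ht^{-1}$. By your duality argument, the claimed bound would also give the wrong estimate $\|E_h(t)v\|_H\le Cht^{-1/2}\|v\|_{\dot H^{-1}}$.

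What the vertex $(\rho,\eta)=(1,-1)$ actually needs is $\|E_h(t)w\|_{\dot H^1}\le Cht^{-1}\|w\|_H$. For $t\ge h^2$ this follows from the smooth-data bound $\|E_h(s)y\|_{\dot H^1}\le Chs^{-1/2}\|y\|_{\dot H^1}$, the identity $E_h(t)=E_h(t/2)S(t/2)+S_h(t/2)P_hE_h(t/2)$, discrete smoothing, and the $(2,0)$ estimate. For $t<h^2$ it is trivial.

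Note also that the admissible parameter set is the pentagon with vertices $(0,0),(0,-1),(1,-1),(2,0),(2,2)$. The easy vertex $(0,-1)$ must therefore be included in your interpolation.

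\paragraph{A caution on (iii) at $\rho=2$.} A plain $L^2$ energy argument for $\theta_h$ stalls, because $\|\theta_h(0)\|_H=\|(R_h-P_h)v\|_H$ is only $O(h\|v\|_{\dot H^1})$. The clean route uses the identity $A_h^{-1}P_he'+e=(R_h-I)u$, where $e=E_h(\cdot)v$ and $u=S(\cdot)v$, tested with $e$. Since $e(0)=P_hv-v\perp V_h$, the initial term $\|A_h^{-1/2}P_he(0)\|_H^2$ vanishes. This gives $\int_0^t\|e\|_H^2\,d\sigma\le\int_0^t\|(R_h-I)u\|_H^2\,d\sigma\le Ch^{2\rho}\|v\|_{\dot H^{\rho-1}}^2$ for $\rho\in[1,2]$. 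The range $\rho\in[0,1)$ then follows by interpolation with the trivial case $\rho=0$.
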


\begin{proof}
The proof of (i) follows similarly to Lemma~5.1 in Andersson et al.~\cite{MR3475837}, utilizing the semidiscrete error estimates from Lemma~3.8 in \cite{MR3154916}. The error bound in (ii) is established in Kruse~\cite[Lemma~4.2(i)]{MR3168284}. Finally, the bound in (iii) is obtained by combining the estimates of Yan~\cite[Lemma~4.1]{MR2211047} and Kruse~\cite[Lemma~4.2(ii)]{MR3168284}.
\end{proof}

Next, we collect the corresponding error bounds for the fully discrete approximation utilizing the linear implicit Euler method.

\begin{lemma}\label{lem:fully_discrete_error_estimates}
The following estimates hold for the fully discrete approximation for all $t_n > 0$ and $h, k \in (0,1]$:
\begin{enumerate}[label={\upshape(\roman*)}]
    \item Let $0 \leq \rho \leq 2$ and $-\min(1,2-\rho) \leq \eta \leq \rho$. Then, there exists a constant $C>0$ such that
    \[
        \bigl\|\bigl(S(t_{n})-S_{h,k}^{n}P_{h}\bigr)v\bigr\|_H \le C\bigl(h^{\rho}+k^{\frac{\rho}{2}}\bigr)\,t_n^{-\frac{\rho - \eta}{2}}\,\|v\|_{\dot H^{\eta}} \quad \text{for all } v \in \dot{H}^{\eta}.
    \]
    \item For any $0 \le \rho \le 1$, there exists a constant $C>0$ such that 
    \[
        \left\|\sum_{j=1}^{n}\int_{t_{j-1}}^{t_{j}} \bigl(S_{h,k}^{j}P_{h}-S(\sigma)\bigr)v\,d\sigma\right\|_H \le C\bigl(h^{2-\rho}+k^{\frac{2-\rho}{2}}\bigr)\,\|v\|_{\dot H^{-\rho}} \quad \text{for all } v \in \dot{H}^{-\rho}.
    \]
    \item For any $0 \le \rho \le 2$, there exists a constant $C>0$ such that 
    \[
        \left(\sum_{j=1}^{n}\int_{t_{j-1}}^{t_{j}} \bigl\|\bigl(S_{h,k}^{j}P_{h}-S(\sigma)\bigr)v\bigr\|_H^{2}\,d\sigma\right)^{\!1/2} \le C\bigl(h^{\rho}+k^{\frac{\rho}{2}}\bigr)\,\|v\|_{\dot H^{\rho-1}} \quad \text{for all } v \in \dot{H}^{\rho-1}.
    \]
\end{enumerate}
\end{lemma}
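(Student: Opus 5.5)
The plan is to split each fully discrete error operator into a spatial part and a temporal part and reuse Lemma~\ref{lem:semidiscrete_error_estimates} for the spatial part. For (i) we write
\[
S(t_n)-S_{h,k}^nP_h=\bigl(S(t_n)-S_h(t_n)P_h\bigr)+\bigl(S_h(t_n)-S_{h,k}^n\bigr)P_h .
\]
The first term is handled directly by Lemma~\ref{lem:semidiscrete_error_estimates}(i).

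For the second term we need the discrete analogue of the classical nonsmooth-data estimate for the backward Euler method applied to the self-adjoint, positive definite operator $A_h$:
\[
\bigl\|(S_h(t_n)-S_{h,k}^n)A_h^{-\theta}x_h\bigr\|_H\le Ck^{\theta}t_n^{-\theta+\epsilon}\|A_h^{-\epsilon}x_h\|_H ,
\]
obtained spectrally from the scalar bound $|e^{-n\lambda k}-(1+k\lambda)^{-n}|\le C\min\bigl(1,(k\lambda)^{s}n^{-(1-s)}\bigr)$ type estimates (Thom\'ee, Theorem~7.1). We take $\theta=\rho/2$ and write $P_hv=A_h^{-\eta/2}A_h^{\eta/2}P_hv$. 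To conclude we then need the norm equivalence $\|A_h^{\eta/2}P_hv\|_H\le C\|v\|_{\dot H^{\eta}}$ in the admissible range $-1\le\eta\le1$. For $\eta\in(1,\rho]$ we instead use $P_h$ replaced by the Ritz projection $R_h$ together with the identity $A_hR_h=P_hA$, as in Kruse~\cite[Lemma~4.4]{MR3168284}. This yields $Ck^{\rho/2}t_n^{-(\rho-\eta)/2}\|v\|_{\dot H^\eta}$.

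For (ii) and (iii) we add and subtract $S_h(\sigma)P_h$ on each subinterval $[t_{j-1},t_j]$. The pure spatial pieces $\int_0^{t_n}(S(\sigma)-S_h(\sigma)P_h)v\,d\sigma$ and their $L^2$-in-time analogue are bounded by Lemma~\ref{lem:semidiscrete_error_estimates}(ii) and (iii), respectively. The remaining temporal pieces are
\[
\sum_j\int_{t_{j-1}}^{t_j}\bigl(S_{h,k}^j-S_h(\sigma)\bigr)P_hv\,d\sigma ,
\]
together with the corresponding square-integrated sums.

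We treat these in the eigenbasis of $A_h$. For each eigenvalue $\lambda$ we bound, for (ii), the scalar quantity
\[
\Bigl|\sum_j\int_{t_{j-1}}^{t_j}\bigl((1+k\lambda)^{-j}-e^{-\lambda\sigma}\bigr)d\sigma\Bigr|
\le C\min(\lambda^{-1},k)\le Ck^{(2-\rho)/2}\lambda^{-\rho/2},
\]
and, for (iii), the scalar quantity
\[
\sum_j\int_{t_{j-1}}^{t_j}\bigl|(1+k\lambda)^{-j}-e^{-\lambda\sigma}\bigr|^2d\sigma\le Ck^{\rho}\lambda^{\rho-1}.
\]
The latter follows by interpolating between the trivial bound $C\lambda^{-1}$ (the sums are geometric) and the bound $Ck\lambda\cdot\lambda^{-1}\cdot\dots$ obtained by splitting $|(1+k\lambda)^{-j}-e^{-\lambda t_j}|+|e^{-\lambda t_j}-e^{-\lambda\sigma}|$. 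Summing over the eigenmodes then gives bounds in terms of $\|A_h^{-\rho/2}P_hv\|_H$ and $\|A_h^{(\rho-1)/2}P_hv\|_H$, respectively. These are converted to $\dot H^{-\rho}$ and $\dot H^{\rho-1}$ norms by the discrete norm equivalence.

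The main obstacle I expect is precisely this conversion when the exponent exceeds $1$: in (iii) with $\rho\in(1,2]$, $\|A_h^{(\rho-1)/2}P_hv\|_H\lesssim\|v\|_{\dot H^{\rho-1}}$ requires stability of $P_h$ in $\dot H^{\rho-1}$. This holds for quasi-uniform meshes and $\rho-1\le1$ (via $H^1$-stability of $P_h$ plus interpolation), and I would cite Kruse~\cite[Lemma~4.6]{MR3168284} for it. For (i) with $\eta>1$ the Ritz projection device above is required, and checking the constant's independence of $n$ in the scalar estimates (uniformly in $k\lambda$) is the delicate calculus part.
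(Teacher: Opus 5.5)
Your argument is correct in substance, but it takes a different route from the paper, whose proof contains no argument of its own: it imports (i) from Andersson et al.~\cite[Lemma~5.1]{MR3475837} and (ii)--(iii) from Wang~\cite[Lemma~3.2]{MR3649432}. You instead derive the lemma from Lemma~\ref{lem:semidiscrete_error_estimates} plus scalar facts about the backward Euler rational function. After splitting off $S(t)-S_h(t)P_h$, the remaining errors diagonalize in the eigenbasis of $A_h$. The discrete norms are then converted back via $\|A_h^{\gamma/2}P_hv\|_H\le C\|v\|_{\dot H^{\gamma}}$, $\gamma\in[-1,2]$.

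This makes explicit what the citation hides. The constants are visibly uniform in $n$. The mesh enters only through that norm equivalence for $\gamma>0$; for $\gamma\in[-1,0]$ it holds with $C=1$ by duality, since $\|A_h^{-1/2}P_hv\|_H\le\|v\|_{\dot H^{-1}}$. Also, the temporal part of (ii) is exactly $A_h^{-1}(S_h(t_n)-S_{h,k}^n)P_hv$, because $k\sum_{j=1}^{n}(1+k\lambda)^{-j}-\int_0^{t_n}e^{-\lambda\sigma}\,d\sigma=\lambda^{-1}\bigl(e^{-\lambda t_n}-(1+k\lambda)^{-n}\bigr)$, which gives your $C\min(\lambda^{-1},k)$ at once. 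The citation route buys brevity.

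Three details need repair.

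\textbf{The displayed bound in (i).} Your displayed estimate for $S_h(t_n)-S_{h,k}^n$ is not scale invariant as written. What you actually use is $\|(S_h(t_n)-S_{h,k}^n)A_h^{-\eta/2}\|_{\mathcal{L}(V_h)}\le Ck^{\rho/2}t_n^{-(\rho-\eta)/2}$. This follows from $|e^{-nx}-(1+x)^{-n}|\le Cx^{s}n^{-(1-s)}$ for $x>0$, $n\ge1$, $s\in[-1,1]$. Take $s=\eta/2$ and use $n^{-(1-s)}\le n^{-(\rho/2-s)}$, valid since $\rho\le2$.

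\textbf{The endpoint in (iii).} The bound to interpolate against at $\rho=2$ is $Ck^2\lambda$. It comes from $\sum_{j\ge1}|e^{-jx}-(1+x)^{-j}|^2\le Cx$ together with $|e^{-\lambda t_j}-e^{-\lambda\sigma}|\le e^{-\lambda\sigma}\min(1,\lambda k)$ on $[t_{j-1},t_j]$. Combined with the trivial bound $C\lambda^{-1}$, this gives $Ck^{\rho}\lambda^{\rho-1}$.

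\textbf{The case $\eta\in(1,\rho]$ in (i).} The identity $A_hR_h=P_hA$ alone does not suffice, because the statement involves $P_h$, not $R_h$. You also need the inverse inequality, $\|A_h(P_h-R_h)v\|_H\le Ch^{-2}\|(P_h-R_h)v\|_H\le C\|v\|_{\dot H^{2}}$. Interpolating this with the case $\eta=1$ gives $\|A_h^{\eta/2}P_hv\|_H\le C\|v\|_{\dot H^{\eta}}$ for $\eta\in[1,2]$. This uses the quasi-uniform meshes the paper assumes.
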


\begin{proof}
The error bound in (i) is a direct consequence of the deterministic error estimate established by Andersson et al.~\cite[Lemma~5.1]{MR3475837}. The bounds in (ii) and (iii) are derived in Wang~\cite[Lemma~3.2]{MR3649432}.
\end{proof}

%%%%%%%%%%%%%%%%%%%%%%%%%%%%%%%%%%%%%%%%%%%%%%%%%%%%%%%%%%%%%
\section{Proof of main results}\label{sec:proof_of_main_results}

\subsection{Proof of Theorem~\ref{thm:strong_convergence_of_semi_discrete_scheme}}\label{subsec:proof_of_semi_discrete_result}
%%%%%%%%%%%%%%%%%%%%%%%%%%%%%%%%%%%%%%%%%%%%%%%%%%%%%%%%%%%%%
We now establish the strong error estimate of the spatial semidiscrete approximation. The proof relies on the following generalized continuous Gronwall lemma by Elliott and Larsson~\cite{MR1122067}.

\begin{lemma}[{\cite[Lemma~6.3]{MR1122067}}] \label{lem:continuous_gronwall}
Let the function $\phi(t) \ge 0$ be continuous for $0 < t \le T$. If
\begin{equation*}
    \phi(t) \le C_1 t^{-1+\theta} + C_2 \int_0^t (t-s)^{-1+\alpha} \phi(s) \, ds, \quad 0 < t \le T,
\end{equation*}
for some constants $C_1, C_2 \ge 0$ and $\theta, \alpha > 0$, then there exists a constant $C = C(C_2, T, \theta, \alpha) > 0$ such that
\begin{equation*}
    \phi(t) \le C C_1 t^{-1+\theta}, \quad 0 < t \le T.
\end{equation*}
\end{lemma}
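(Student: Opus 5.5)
The statement to prove is the weakly singular Gronwall inequality of Lemma~\ref{lem:continuous_gronwall}. The standard route is iteration of the inequality combined with the Beta-function identity
\[
\int_0^t (t-s)^{-1+a} s^{-1+b}\,ds = B(a,b)\, t^{-1+a+b}, \qquad a,b>0.
\]
The plan is to iterate until the kernel becomes bounded, and then close with a classical Gronwall argument on a suitably weighted function. We may assume $\theta\le 1$. If $\theta>1$, then $t^{-1+\theta}\le T^{\theta-\theta'}t^{-1+\theta'}$ with $\theta'=1$; but we actually want to keep $t^{-1+\theta}$ on the right-hand side, so we treat general $\theta$ directly, which the argument below permits.

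First define the operator $(K\psi)(t)=C_2\int_0^t (t-s)^{-1+\alpha}\psi(s)\,ds$, which is positive and order-preserving. Substituting the inequality $\phi\le C_1 t^{-1+\theta}+K\phi$ into itself $m$ times gives
\[
\phi \le C_1\sum_{j=0}^{m-1} K^j\bigl(t^{-1+\theta}\bigr) + K^m\phi .
\]
By the Beta identity, $K^j(t^{-1+\theta}) = C_2^j \frac{\Gamma(\alpha)^j\Gamma(\theta)}{\Gamma(\theta+j\alpha)}\, t^{-1+\theta+j\alpha}\le C\,t^{-1+\theta}$ on $(0,T]$. Likewise, $K^m$ has kernel $C_2^m\Gamma(\alpha)^m/\Gamma(m\alpha)\,(t-s)^{-1+m\alpha}$. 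Choosing $m$ with $m\alpha\ge 1$ makes this kernel bounded by a constant $M$ on $[0,T]$. This leaves
\[
\phi(t)\le C C_1 t^{-1+\theta} + M\int_0^t \phi(s)\,ds .
\]

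Next set $\psi(t)=t^{1-\theta}\phi(t)$ when $\theta\le1$. Then
\[
\psi(t)\le CC_1 + M t^{1-\theta}\int_0^t s^{-1+\theta}\psi(s)\,ds \le CC_1 + MT^{1-\theta}\int_0^t s^{-1+\theta}\psi(s)\,ds .
\]
Since $s^{-1+\theta}$ is integrable, the standard Gronwall lemma with integrable weight yields $\psi\le CC_1\exp\bigl(MT^{1-\theta}T^\theta/\theta\bigr)$. This requires $\int_0^t s^{-1+\theta}\psi(s)\,ds$ to be finite. If $\theta>1$, apply Gronwall directly to $\phi$, which is bounded by a multiple of $C_1 t^{-1+\theta}\le C_1T^{\theta-1}$. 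Afterwards recover the sharper weight $t^{-1+\theta}$ by reinserting the bound into the iterated inequality: the bounded-kernel term gives $\int_0^t\phi\le CC_1t^{\theta}\le CC_1T\,t^{-1+\theta}$.

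The main obstacle is justifying that the iteration and the Gronwall step are legitimate, that is, that $\phi$ is integrable near $0$ against these kernels. The lemma only assumes continuity on $(0,T]$, and without an a priori integrability assumption, $K^m\phi$ might be infinite and the conclusion vacuous. My first attempt will be to note that the hypothesis itself forces the right-hand side to be finite. Because $\phi\ge0$ and $\phi(t)\le C_1t^{-1+\theta}+K\phi(t)$ is assumed meaningful, $K\phi(t)<\infty$ for each $t$. This gives local integrability of $s^{-1+\theta}\psi$, since $(t-s)^{-1+\alpha}$ is bounded below on $[0,t]$ when $\alpha\le1$; when $\alpha>1$, the same conclusion holds on $[0,t/2]$. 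That integrability is all the Gronwall step needs; as in Elliott--Larsson, finiteness of the integrals is implicitly part of the hypothesis.
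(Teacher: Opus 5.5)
The paper does not prove this lemma; it imports it from Elliott--Larsson. Your main line is the standard argument for this kind of weakly singular Gronwall inequality: iterate with the Beta identity until the kernel $(t-s)^{-1+m\alpha}$ is bounded, then close with the classical Gronwall lemma. Your branch $\theta\le 1$ is correct, and it is the only case the paper uses, since there $\theta=1-\frac{\beta-\nu}{2}\in(0,1]$. You are also right that finiteness of the integral in the hypothesis has to be read as part of the assumption. With $C_1=0$, the function $\phi(t)=t^{-2}$ satisfies the hypothesis, the integral being $+\infty$, but violates the conclusion. Your deduction $\phi\in L^1(0,T)$ from $K\phi(T)<\infty$ and continuity on $(0,T]$ is fine.

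The branch $\theta>1$ contains a step that fails as written. Applying Gronwall to $\phi$ with forcing bounded by $CC_1T^{\theta-1}$ only gives the uniform bound $\phi\le CC_1$. Reinserting that bound yields $\int_0^t\phi\le CC_1t$, not $CC_1t^{\theta}$. For $\theta>2$ the inequality $t\le C\,t^{-1+\theta}$ fails near $0$, so one reinsertion does not recover the weight $t^{-1+\theta}$; the bound $\int_0^t\phi\le CC_1t^{\theta}$ you write down is essentially the conclusion itself.

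A clean repair, which also removes the case distinction and the weighted function $\psi$, is to apply Gronwall to $\Phi(t)=\int_0^t\phi(s)\,ds$. This function is absolutely continuous because $\phi\in L^1(0,T)$, and it satisfies $\Phi'(t)\le CC_1t^{-1+\theta}+M\Phi(t)$ for a.e. $t$. Integrating $\bigl(e^{-Mt}\Phi\bigr)'\le CC_1t^{-1+\theta}$ gives $\Phi(t)\le CC_1e^{MT}t^{\theta}/\theta$ for every $\theta>0$. Using $t^{\theta}\le T\,t^{-1+\theta}$, this yields
\[
\phi(t)\le CC_1t^{-1+\theta}+M\Phi(t)\le CC_1\Bigl(1+\frac{MTe^{MT}}{\theta}\Bigr)t^{-1+\theta}.
\]
Alternatively, you can repeat your reinsertion $\lceil\theta\rceil-1$ times, since each pass gains one power of $t$ in the trailing term.
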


With this, we now proceed to the proof of Theorem~\ref{thm:strong_convergence_of_semi_discrete_scheme}.

\begin{proof}[Proof of Theorem~\ref{thm:strong_convergence_of_semi_discrete_scheme}]
    Let $t \in (0,T]$. From \eqref{eq:mild_solution_SEE} and \eqref{eq:mild_semidiscrete}, we apply the triangle inequality to obtain
\begin{align}
\| X_h(t) - X(t) \|_{L^p(\Omega; H)} &\le \| S_h(t) P_h X_0 - S(t)X_0 \|_{L^p(\Omega;H)} \notag \\
&\quad
+ \left\| \int_0^t S_h(t-\sigma) P_h F(\sigma,X_h(\sigma)) \, d\sigma - \int_{0}^t S(t-\sigma) F(\sigma,X(\sigma)) \, d\sigma \right\|_{L^p(\Omega;H)} \notag \\
&\quad
+ \left\| \int_0^t S_h(t-\sigma) P_h G(\sigma) \, dW(\sigma) - \int_0^t S(t-\sigma) G(\sigma) \, dW(\sigma) \right\|_{L^p(\Omega; H)} \notag \\
&\coloneqq I_0 + I_F + I_G. \label{eq:bound_semidiscrete}
\end{align}
We bound the terms $I_0, I_F$, and $I_G$ individually. For $I_0$, applying Lemma~\ref{lem:semidiscrete_error_estimates}(i) yields
\begin{equation}
    I_0 = \|\bigl(S_h(t)P_h - S(t)\bigr) X_0\|_{L^p(\Omega;H)} \le C\,h^\beta \,t^{-\frac{\beta-\nu}{2}} \|X_0\|_{L^p(\Omega;\dot{H}^\nu)}.
\end{equation}
To bound the drift error $I_F$, we decompose it into three parts:
\begin{align}
I_F
&= \left\| \int_{0}^{t} S_h(t-\sigma)P_h F(\sigma,X_h(\sigma))\,d\sigma 
      - \int_{0}^{t} S(t-\sigma)F(\sigma,X(\sigma))\,d\sigma \right\|_{L^p(\Omega;H)} \notag \\
&\le 
   \left\| \int_{0}^{t} S_h(t-\sigma)P_h\bigl(F(\sigma,X_h(\sigma))-F(\sigma,X(\sigma))\bigr)\,d\sigma \right\|_{L^p(\Omega;H)} \notag \\
&\quad
   + \left\| \int_{0}^{t} \bigl( S_h(t-\sigma) P_h - S(t-\sigma) \bigr) 
      \bigl( F(\sigma,X(\sigma)) - F(\sigma,X(t)) \bigr)\,d\sigma \right\|_{L^p(\Omega;H)} \notag \\
&\quad
   + \left\| \int_{0}^{t} \bigl(S_h(t-\sigma)P_h - S(t-\sigma)\bigr)F(\sigma,X(t))\,d\sigma \right\|_{L^p(\Omega;H)} \notag \\
&\coloneqq I_{F1} + I_{F2} + I_{F3}. \label{eq:bound_IF}
\end{align}
We estimate the terms $I_{F1}, I_{F2}$, and $I_{F3}$ as follows. For $I_{F1}$, applying the bound \eqref{eq:semidiscrete_smoothing} with $\rho=0$ and the property $\|P_h\|_{\mathcal{L}(H)} \le 1$, alongside the Lipschitz condition in Assumption~\ref{ass:drift}, yields:
\begin{align}
I_{F1}
&= \left\| \int_{0}^{t}S_h(t-\sigma)P_h\bigl(F(\sigma,X_h(\sigma))-F(\sigma,X(\sigma))\bigr)\,d\sigma \right\|_{L^p(\Omega;H)} \notag \\
&\le \int_{0}^{t} \|S_h(t-\sigma)\|_{\mathcal{L}(H)} \|P_h\|_{\mathcal{L}(H)} \| F(\sigma,X_h(\sigma))-F(\sigma,X(\sigma)) \|_{L^p(\Omega;H)} \,d\sigma \notag \\
&\le C \int_{0}^{t} \| X_h(\sigma)-X(\sigma) \|_{L^p(\Omega;H)} \,d\sigma. \label{eq:bound_IF1}
\end{align}
For $I_{F2}$, applying Lemma~\ref{lem:semidiscrete_error_estimates}(i), Assumption~\ref{ass:drift} and Theorem~\ref{thm:singular_regularity}(ii) yields:
\begin{align}
I_{F2} 
&= \left\| \int_{0}^{t} \bigl(S_h(t-\sigma)P_h - S(t-\sigma)\bigr) \bigl( F(\sigma,X(\sigma))-F(\sigma,X(t))\bigr) \,d\sigma \right\|_{L^p(\Omega;H)} \notag \\
&\leq \int_{0}^{t} \| S_h(t-\sigma)P_h - S(t-\sigma) \|_{\mathcal{L}(H)} \, \| F(\sigma,X(\sigma))-F(\sigma,X(t)) \|_{L^p(\Omega;H)} \,d\sigma \notag \\
&\leq C\,h^\beta \int_{0}^{t}(t-\sigma)^{-\frac{\beta}{2}} \,\|X(\sigma)-X(t)\|_{L^p(\Omega;H)}\,d\sigma \notag \\
&\leq C\,h^\beta \int_{0}^{t}(t-\sigma)^{-\frac{\beta}{2}} \left( (t-\sigma)^{\frac{\beta}{2}} \, \sigma^{-\frac{\beta -\nu}{2}} + (t-\sigma)^{\frac{\min(\beta,1)}{2}} \right)\,d\sigma \notag \\
&= C\,h^\beta \int_{0}^{t} \sigma^{-\frac{\beta-\nu}{2}}\,d\sigma + C\,h^\beta \int_{0}^{t}(t-\sigma)^{-\frac{\beta}{2} + \frac{\min(\beta,1)}{2}}\,d\sigma \notag \\
&\leq C\,h^\beta. \label{eq:bound_IF2}
\end{align}
For $I_{F3}$, we add and subtract $F(t,X(t))$ and apply Lemma~\ref{lem:semidiscrete_error_estimates}(i) and (ii) to obtain:
\begin{align}
I_{F3} &\leq \left\| \int_{0}^{t} \bigl(S_h(t-\sigma)P_h - S(t-\sigma)\bigr) \bigl(F(\sigma,X(t)) - F(t,X(t))\bigr)\,d\sigma \right\|_{L^p(\Omega;H)} \notag \\
&\quad + \left\| \int_{0}^{t} \bigl(S_h(t-\sigma)P_h - S(t-\sigma)\bigr) F(t,X(t))\,d\sigma \right\|_{L^p(\Omega;H)} \notag \\
&\leq C \, h^\beta \int_{0}^{t} (t-\sigma)^{-\frac{\beta}{2}} (t-\sigma)^{\beta/2}\, d\sigma + C\,h^2\,\|F(t,X(t))\|_{L^p(\Omega;H)} \notag \\
&\leq C\, h^\beta + C\, h^2
\leq C\, h^\beta. \label{eq:bound_IF3}
\end{align}
From \eqref{eq:bound_IF}, \eqref{eq:bound_IF1}, \eqref{eq:bound_IF2}, and \eqref{eq:bound_IF3}, we conclude that
\begin{equation}
I_F \le C \, h^\beta + C \int_{0}^{t} \|X_h(\sigma)-X(\sigma)\|_{L^p(\Omega;H)} \,d\sigma.
\end{equation}
Next, applying Lemma~\ref{lem:BDG} (Burkholder--Davis--Gundy inequality) to the stochastic term $I_G$ yields:
\begin{align}
I_G &= \left\| \int_{0}^{t}S_h(t-\sigma)P_h G(\sigma)\,dW(\sigma) - \int_{0}^{t}S(t-\sigma) G(\sigma)\,dW(\sigma) \right\|_{L^p(\Omega;H)} \notag \\
&\le C \left( \int_{0}^{t} \| \bigl(S_h(t-\sigma)P_h - S(t-\sigma)\bigr)G(\sigma) \|_{\mathrm{HS}(U_0,H)}^2\,d\sigma \right)^{\frac{1}{2}} \notag \\
&\le C \left( \int_{0}^{t} \| \bigl(S_h(t-\sigma)P_h - S(t-\sigma)\bigr)\bigl(G(\sigma) - G(t)\bigr) \|_{\mathrm{HS}(U_0,H)}^2\,d\sigma \right)^{\frac{1}{2}} \notag \\
&\quad + C \left( \int_{0}^{t} \| \bigl(S_h(t-\sigma)P_h - S(t-\sigma)\bigr)G(t) \|_{\mathrm{HS}(U_0,H)}^2\,d\sigma \right)^{\frac{1}{2}} \notag \\
&\coloneqq I_{G1} + I_{G2} \label{eq:bound_IG}.
\end{align}
We bound $I_{G1}$ and $I_{G2}$ separately. Let $\{\psi_m\}_{m=1}^{\infty}$ be an orthonormal basis of $U_0$. For $I_{G1}$, applying Lemma~\ref{lem:semidiscrete_error_estimates}(i) alongside Assumption~\ref{ass:diffusion}, we obtain:
\begin{align}
I_{G1} &\le \left( \int_{0}^{t} \sum_{m=1}^{\infty} \| \bigl(S_h(t-\sigma)P_h - S(t-\sigma)\bigr)\bigl(G(\sigma) - G(t)\bigr)\psi_m \|_H^2\,d\sigma \right)^{\!1/2} \notag \\
&\le \left( \sum_{m=1}^{\infty} \int_0^t \bigl( C \, h^\beta (t-\sigma)^{-\frac{1}{2}} \| \bigl(G(\sigma) - G(t)\bigr)\psi_m \|_{\dot H^{\beta-1}} \bigr)^2 \, d\sigma \right)^{\!1/2} \notag \\
&= C\, h^\beta \left( \int_{0}^{t} (t-\sigma)^{-1} \| G(\sigma)-G(t) \|_{\mathrm{HS}(U_0,\dot H^{\beta-1})}^2 \,d\sigma \right)^{\!1/2} \notag \\
&\le C \, h^\beta \left( \int_{0}^{t} (t-\sigma)^{-1} \, (t-\sigma)^{2\delta} \,d\sigma \right)^{\!1/2} \notag \\
&= C\, h^\beta \left( \int_{0}^{t} (t-\sigma)^{2\delta-1} \,d\sigma \right)^{\!1/2} \notag \\
&\le C \, h^\beta. \label{eq:bound_IG1}
\end{align}
For $I_{G2}$, applying Lemma~\ref{lem:semidiscrete_error_estimates}(iii) and Assumption~\ref{ass:diffusion} yields:
\begin{align}
I_{G2} &\le \left( \int_{0}^{t} \| \bigl(S_h(t-\sigma)P_h - S(t-\sigma)\bigr)G(t) \|_{\mathrm{HS}(U_0,H)}^2\,d\sigma \right)^{\!1/2} \notag \\
&= \left( \sum_{m=1}^{\infty} \int_{0}^{t} \| \bigl(S_h(t-\sigma)P_h - S(t-\sigma)\bigr)G(t)\psi_m \|_H^2\,d\sigma \right)^{\!1/2} \notag \\
&\le \left( \sum_{m=1}^{\infty} \bigl( C h^\beta \| G(t)\psi_m \|_{\dot H^{\beta-1}} \bigr)^2 \right)^{\!1/2} \notag \\
&= C h^\beta \left( \sum_{m=1}^{\infty} \| G(t)\psi_m \|_{\dot H^{\beta-1}}^2 \right)^{\!1/2} \notag \\
&= C h^\beta \| G(t) \|_{\mathrm{HS}(U_0,\dot H^{\beta-1})} \notag \\
&\le C h^\beta. \label{eq:bound_IG2}
\end{align}
From \eqref{eq:bound_IG}, \eqref{eq:bound_IG1}, and \eqref{eq:bound_IG2}, we conclude that
\begin{equation}
I_G \le I_{G1} + I_{G2} \le C h^\beta. 
\end{equation}
Collecting the estimates for $I_0, I_F$, and $I_G$, we obtain:
\begin{equation}\label{eq:total_error_bound}
\begin{split} 
    \| X_h(t) - X(t) \|_{L^p(\Omega; H)} &\le C h^{\beta} t^{-\frac{\beta-\nu}{2}} \|X_0\|_{L^p(\Omega;\dot{H}^{\nu})} + C h^\beta \\ 
    &\quad + C \int_{0}^{t} \| X_h(\sigma)-X(\sigma) \|_{L^p(\Omega;H)} \, d\sigma. 
\end{split}
\end{equation}
By setting $\phi(t) \coloneqq \| X_h(t) - X(t) \|_{L^p(\Omega; H)}$ and $C_1 \coloneqq C h^\beta \bigl( \|X_0\|_{L^p(\Omega;\dot{H}^{\nu})} + T^{\frac{\beta-\nu}{2}} \bigr)$, inequality \eqref{eq:total_error_bound} becomes
\begin{equation*}
    \phi(t) \le C_1 t^{-\frac{\beta-\nu}{2}} + C_2 \int_0^t \phi(\sigma) \, d\sigma.
\end{equation*}
Because $\frac{\beta-\nu}{2} < 1$, we can apply Lemma~\ref{lem:continuous_gronwall} with $\theta = 1 - \frac{\beta-\nu}{2} > 0$ and $\alpha = 1 > 0$ to obtain
\begin{equation}\label{eq:cont_gronwall_intermediate}
    \phi(t) \le C C_1 t^{-\frac{\beta-\nu}{2}}.
\end{equation}
Substituting $C_1$ back into \eqref{eq:cont_gronwall_intermediate} yields the final bound:
\begin{equation*}
    \|X_h(t) - X(t)\|_{L^p(\Omega;H)} \le C h^\beta t^{-\frac{\beta-\nu}{2}} \left(1 + \|X_{0}\|_{L^{p}(\Omega;\dot{H}^{\nu})}\right),
\end{equation*}
which completes the proof.
\end{proof}

%%%%%%%%%%%%%%%%%%%%%%%%%%%%%%%%%%%%%%%%
\subsection{Proof of Theorem~\ref{thm:strong_convergence_of_fully_discrete_scheme}}\label{subsec:proof_of_fully_discrete_result}
%%%%%%%%%%%%%%%%%%%%%%%%%%%%%%%%%%%%%%%%%
To establish the strong error estimate for the fully discrete scheme, we need two auxiliary results: a discrete Burkholder--Davis--Gundy (BDG) inequality and a generalized discrete fractional Gronwall lemma. The latter follows the formulation of Andersson et al.~\cite[Lemma~2.1]{MR3475837}, which is based on the foundational inequality proved by Elliott and Larsson~\cite[Lemma~7.1]{MR1122067}.

\begin{lemma}[{\cite[Lemma 4.2]{MR3649432}}]\label{lem:discrete_BDG_inequality}
Let $Z_m \colon \Omega \to H$, for $m=1,2,\dots,M$, be measurable mappings from $(\Omega,\mathcal{F})$ to $(H,\mathcal{B}(H))$ such that $\mathbb{E}[\|Z_{m}\|^{p}]<\infty$. Assume that
\begin{equation}\label{eq:martingale_difference_sequence}
\mathbb{E}\bigl[Z_{m+1}\,\big|\,Z_{1},\dots,Z_{m}\bigr]=0
\quad\text{for }m=1,\dots,M-1.
\end{equation}
Then for any $p\in[2,\infty)$, there exists a constant $C_{p}>0$ independent of $m$ such that
\begin{equation}\label{eq:discrete_BDG}
\left\|\sum_{i=1}^{m}Z_{i}\right\|_{L^{p}(\Omega;H)}
\le C_{p} \left(\sum_{i=1}^{m}\|Z_{i}\|_{L^{p}(\Omega;H)}^{2}\right)^{\!1/2}
\quad\text{for }m=1,\dots, M.
\end{equation}
\end{lemma}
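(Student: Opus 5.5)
This is the discrete BDG inequality for $H$-valued martingale differences. The plan is to reduce it to a Burkholder-type square-function inequality in Hilbert space, and then pass from the random square function to the deterministic sum of $L^p$-norms by Minkowski's inequality in $L^{p/2}(\Omega)$. Set $M_m \coloneqq \sum_{i=1}^m Z_i$.

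\textbf{Martingale property.} Condition \eqref{eq:martingale_difference_sequence} means $(M_m)_{m\le M}$ is an $H$-valued martingale with respect to the natural filtration $\mathcal{G}_m \coloneqq \sigma(Z_1,\dots,Z_m)$. Integrability is guaranteed by $\mathbb{E}\|Z_m\|^p<\infty$ and $p\ge2$.

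\textbf{Square-function bound.} Since $H$ is a Hilbert space, and hence a $2$-smooth (in fact UMD) Banach space, Burkholder's inequality gives
\[
\mathbb{E}\|M_m\|_H^p \le C_p\, \mathbb{E}\Bigl(\sum_{i=1}^m \|Z_i\|_H^2\Bigr)^{p/2}.
\]
To stay self-contained, I would prove this directly.
\begin{itemize}
\item Write $\|M_j\|^2 = \|M_{j-1}\|^2 + 2(M_{j-1},Z_j)_H + \|Z_j\|^2$.
\item Apply the scalar function $x\mapsto x^{p/2}$ and use a second-order Taylor bound.
\item The cross terms of first order in $Z_j$ vanish in expectation after conditioning on $\mathcal{G}_{j-1}$.
\item The remainder is controlled by $C\,\mathbb{E}\bigl[\|M_{j-1}\|^{p-2}\|Z_j\|^2 + \|Z_j\|^p\bigr]$.
\item Summing over $j$ and using Hölder's inequality with exponents $\frac{p}{p-2}$ and $\frac{p}{2}$ yields
\[
\mathbb{E}\|M_m\|^p \le C\,\bigl(\mathbb{E}\max_{j\le m}\|M_j\|^p\bigr)^{(p-2)/p}\Bigl(\mathbb{E}\bigl(\textstyle\sum_j\|Z_j\|^2\bigr)^{p/2}\Bigr)^{2/p}.
\]
\item Doob's maximal inequality bounds $\mathbb{E}\max_j\|M_j\|^p$ by $(p/(p-1))^p\,\mathbb{E}\|M_m\|^p$. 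This is the same mechanism that produces the explicit $C_p$ in Lemma~\ref{lem:BDG}.
\item Dividing by the finite quantity $(\mathbb{E}\|M_m\|^p)^{(p-2)/p}$ gives the square-function bound.
\end{itemize}
An alternative is to embed the discrete martingale into a continuous one and apply Lemma~\ref{lem:BDG}. That route is awkward for general $Z_i$, so I prefer the direct Taylor–Doob route. Citing Burkholder's inequality is also acceptable.

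\textbf{Minkowski step.} Take $p/2$-th roots and apply the triangle inequality in $L^{p/2}(\Omega)$, which is valid since $p/2\ge1$:
\[
\Bigl\|\sum_i\|Z_i\|_H^2\Bigr\|_{L^{p/2}(\Omega)} \le \sum_i \bigl\|\|Z_i\|_H^2\bigr\|_{L^{p/2}(\Omega)} = \sum_i\|Z_i\|_{L^p(\Omega;H)}^2.
\]
Taking square roots gives \eqref{eq:discrete_BDG}, with a constant depending only on $p$ and not on $m$ or $M$.

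\textbf{Main obstacle.} The delicate step is the square-function bound: establishing the Taylor-remainder estimate for $\|x+y\|^{p}$ with a constant uniform in $m$, and justifying the division step. Finiteness of all moments follows from $\mathbb{E}\|Z_i\|^p<\infty$ and the finiteness of the sum. For $p=2$ the whole argument reduces trivially to orthogonality of the increments.
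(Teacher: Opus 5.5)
Your argument is correct and is the standard proof of this result; the paper gives no proof of its own and simply cites Wang~\cite[Lemma~4.2]{MR3649432}. The route is Burkholder's square-function inequality for the $H$-valued martingale $M_m=\sum_{i\le m}Z_i$, followed by the triangle inequality in $L^{p/2}(\Omega)$ applied to $\sum_i\|Z_i\|_H^2$. In your optional self-contained derivation of the Burkholder step, note that the displayed H\"older bound drops the term $C\sum_j\mathbb{E}\|Z_j\|_H^p\le C\,\mathbb{E}\bigl(\sum_j\|Z_j\|_H^2\bigr)^{p/2}$, so the last step needs Young's inequality (or a case split) rather than a bare division; and for $2<p<4$ the remainder estimate is best obtained by Taylor-expanding $x\mapsto\|x\|_H^p$ on $H$, whose second derivative is bounded by $p(p-1)\|x\|_H^{p-2}$, rather than the scalar map $x\mapsto x^{p/2}$, whose second derivative blows up at $0$.
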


\begin{lemma}[{\cite[Lemma~2.1]{MR3475837}}]\label{lem:discrete_gronwall}
Let $T > 0$, $N \in \mathbb{N}$, $k = T/N$, and $t_n = n k$ for $0 \le n \le N$. If $(\varphi_n)_{n=0}^N$ is a sequence of non-negative real numbers satisfying
\[
    \varphi_n \le C_1 \left(1 + t_n^{-1+\theta}\right) + C_2 k \sum_{j=0}^{n-1} t_{n-j}^{-1+\alpha} \varphi_j, \quad 1 \le n \le N,
\]
for some constants $C_1, C_2 \ge 0$ and $\theta, \alpha > 0$, then there exists a constant $C = C(\theta, \alpha, C_2, T) > 0$ such that
\[
    \varphi_n \le C C_1 \left(1 + t_n^{-1+\theta}\right), \quad 1 \le n \le N.
\]
\end{lemma}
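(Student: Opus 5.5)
The plan is to iterate the inequality a finite number of times until the convolution kernel is no longer singular, and then close with a standard (non-singular) discrete Gronwall argument. This mirrors the continuous proof of Lemma~\ref{lem:continuous_gronwall} in Elliott and Larsson~\cite{MR1122067}.

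\textbf{Convolution estimate for discrete Beta sums.} For $a,b>0$ and $1\le n\le N$, I will show
\[
k\sum_{j=1}^{n-1} t_{n-j}^{-1+a}\,t_j^{-1+b}\le C(a,b)\,t_n^{-1+a+b}.
\]
When $a,b<1$ both factors are decreasing in their arguments. Each summand is then bounded by the integral of $(t_n-s)^{-1+a}s^{-1+b}$ over a neighbouring interval of length $k$, so the sum is bounded by the Beta integral $B(a,b)\,t_n^{-1+a+b}$. For exponents $\ge 1$ the factors are bounded by powers of $T$ and the estimate is trivial. Similarly, $k\sum_{j=1}^{n-1}t_{n-j}^{-1+a}\le C\,t_n^{a}\le C(T)$.

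\textbf{Treating $j=0$.} The term $k\,t_n^{-1+\alpha}\varphi_0$ is not covered by the hypothesis, which only constrains $n\ge 1$. I would fold it into the forcing: in the intended applications $\varphi_0$ is controlled by $C_1$ (or the sum effectively starts at $j=1$, as in \cite[Lemma~2.1]{MR3475837}). Since $k\,t_n^{-1+\alpha}\le t_n^{\alpha}\le C$, this term contributes at most $C C_1$, which is absorbed into the constant part $C_1\cdot 1$ of the forcing.

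\textbf{Iteration.} Substitute the inequality for $\varphi_j$ into its own right-hand side. Using the convolution estimate, the forcing $C_1(1+t_j^{-1+\theta})$ convolved with $t_{n-j}^{-1+\alpha}$ gives $C C_1(1+t_n^{-1+\theta+\alpha})$, which is bounded by $C C_1(1+t_n^{-1+\theta})$. The double sum collapses to a single sum with kernel $C\,t_{n-j}^{-1+2\alpha}$. After $r$ iterations with $r\alpha\ge 1$ we obtain
\[
\varphi_n\le C C_1(1+t_n^{-1+\theta})+C\,k\sum_{j=1}^{n-1}\varphi_j ,
\]
with the kernel now bounded by a constant depending on $T$. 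The iteration only produces finitely many multiplicative constants, each depending on $\alpha,\theta,C_2,T$.

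\textbf{Closing step.} Set $\psi_n=\varphi_n/(1+t_n^{-1+\theta})$. Since $1+t_j^{-1+\theta}\le C(1+t_n^{-1+\theta})$ fails for $j<n$ when $\theta<1$, I will instead bound the sum directly by the convolution estimate:
\[
k\sum_{j}(1+t_j^{-1+\theta})\psi_j\le \Bigl(\max_{j<n}\psi_j\Bigr)\,C\,t_n^{\theta}.
\]
A cleaner route is to apply the classical discrete Gronwall lemma to $\Phi_n=\max_{j\le n}\varphi_j\,t_j^{1-\theta}$-type quantities.

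The main obstacle is this final step: the forcing is singular while the kernel is now regular. The simplest path is to run one more round of the iteration argument. Write $\varphi_n = C C_1(1+t_n^{-1+\theta}) + R_n$ with $R_n\le C\,k\sum_j R_j + C C_1$, using $k\sum_j t_j^{-1+\theta}\le C$. The classical Gronwall lemma then gives $R_n\le C C_1 e^{CT}$, and the claim follows.
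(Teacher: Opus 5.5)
The paper does not prove this lemma; it only quotes it from Andersson et al. Your argument is the standard proof of weakly singular discrete Gronwall inequalities of Elliott--Larsson type, and it is correct: fold in the $j=0$ term, iterate with the discrete Beta convolution bound until the kernel exponent $-1+r\alpha$ is nonnegative, then apply the classical discrete Gronwall lemma to the excess $R_n=\varphi_n-CC_1(1+t_n^{-1+\theta})$.

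Keep only your last version of the closing step. The $\max_{j<n}\psi_j$ variant does not close, since it only gives $\psi_n\le CC_1+C\max_{j<n}\psi_j$ with no smallness. That $R_n$ may be negative is harmless, because the classical lemma only propagates upper bounds through nonnegative coefficients.

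In the discrete Beta estimate, monotonicity alone does not justify the comparison with the integral: on either one-sided neighbouring interval, one of the two factors moves the wrong way. For $j\ge1$ and $s\in[t_j,t_{j+1}]$, use $(t_n-s)^{-1+a}\ge t_{n-j}^{-1+a}$ together with $s\le 2t_j$, i.e.\ $t_j^{-1+b}\le 2^{1-b}s^{-1+b}$. This gives the bound $2^{1-b}B(a,b)\,t_n^{-1+a+b}$.

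The point that needs a sharper formulation is the $j=0$ term, and you were right to single it out. As printed, the lemma is false, because $\varphi_0$ is unconstrained. Take $C_1=0$, $C_2>0$, $\varphi_0=1$, $\varphi_1=C_2k\,t_1^{-1+\alpha}=C_2k^{\alpha}$, and $\varphi_n=0$ for $n\ge2$. The hypothesis holds, while the conclusion $\varphi_1\le 0$ fails.

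So an extra assumption is unavoidable, but the one you invoke, $\varphi_0\lesssim C_1$, is stronger than needed. It is also not true in the paper's application. In the proof of Theorem~\ref{thm:strong_convergence_of_fully_discrete_scheme}, $\varphi_0=\|(I-P_h)X_0\|_{L^p(\Omega;H)}$ is in general only $O(h^{\nu})$. For $\nu<\beta$ this is not bounded by $C(h^\beta+k^{\beta/2})$ uniformly in $h,k$.

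What the folding step actually needs is $C_2k\,t_n^{-1+\alpha}\varphi_0\le CC_1$. Since $k\,t_n^{-1+\alpha}=k^{\alpha}n^{\alpha-1}\le \max(1,T^{\alpha-1})\,k^{\min(\alpha,1)}$, the correct general statement is the conclusion with $C_1$ replaced by $C_1+k^{\min(\alpha,1)}\varphi_0$ (up to constants depending on $\alpha,C_2,T$). In the application $\alpha=1$ and $\varphi_0\le\|X_0\|_{L^p(\Omega;H)}$, so $k\varphi_0\le k^{\beta/2}\|X_0\|_{L^p(\Omega;H)}\le CC_1$, and the theorem survives.
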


\begin{proof}[Proof of Theorem~\ref{thm:strong_convergence_of_fully_discrete_scheme}]
From \eqref{eq:mild_solution_SEE} and \eqref{eq:mild_fully_discrete}, applying the triangle inequality yields
\begin{align}
\|X(t_{n}) - X_h^{n}\|_{L^{p}(\Omega;H)}
&\le \left\|S(t_{n})X_{0} - S_{h,k}^{n}P_h X_{0}\right\|_{L^{p}(\Omega;H)} \notag \\
&\quad+ \left\|\sum_{i=0}^{n-1}\int_{t_{i}}^{t_{i+1}}
\left( S(t_{n}-\sigma)F(\sigma,X(\sigma))
- S_{h,k}^{n-i}P_{h}F(t_i,X_h^i)\right) \, d\sigma \right\|_{L^{p}(\Omega;H)} \notag \\
&\quad+ \left\|\sum_{i=0}^{n-1}\int_{t_{i}}^{t_{i+1}}
\left( S(t_{n}-\sigma)G(\sigma)
- S_{h,k}^{n-i}P_{h}G(t_i)\right) \, dW(\sigma) \right\|_{L^{p}(\Omega;H)} \notag \\
&\coloneqq J_{0} + J_{F} + J_{G}. \label{eq:bound_fullydiscrete}
\end{align}
For $J_0$, Lemma~\ref{lem:fully_discrete_error_estimates}(i) yields
\begin{equation}\label{eq:bound_J0}
J_{0}
= \left\|\bigl(S(t_{n}) - S_{h,k}^{n}P_{h}\bigr)X_{0}\right\|_{L^{p}(\Omega;H)} \leq C\bigl(h^{\beta}+k^{\frac{\beta}{2}}\bigr)\,t_{n}^{-\frac{\beta -\nu}{2}}\,\|X_{0}\|_{L^{p}(\Omega;\dot H^{\nu})}.
\end{equation}
We decompose the drift error $J_F$ into three terms:
\begin{align}
J_{F}
&\leq
\left\|\sum_{i=0}^{n-1}\int_{t_{i}}^{t_{i+1}}
S(t_{n}-\sigma)\bigl(F(\sigma,X(\sigma)) - F(t_{i},X(t_{i}))\bigr)\,d\sigma\right\|_{L^{p}(\Omega;H)} \notag \\
&\quad+\left\|\sum_{i=0}^{n-1}\int_{t_{i}}^{t_{i+1}}
\bigl(S(t_{n}-\sigma) - S_{h,k}^{n-i}P_{h}\bigr)F(t_{i},X(t_{i}))\,d\sigma\right\|_{L^{p}(\Omega;H)} \notag \\
&\quad+\left\|\sum_{i=0}^{n-1}\int_{t_{i}}^{t_{i+1}}
S_{h,k}^{n-i}P_{h}\bigl(F(t_{i},X(t_{i})) - F(t_i,X_h^{i})\bigr)\,d\sigma\right\|_{L^{p}(\Omega;H)} \notag \\
&\coloneqq J_{F1} + J_{F2} + J_{F3}. \label{eq:bound_JF}
\end{align}
For $J_{F3}$, applying \eqref{eq:discrete_smoothing_estimate} with $\rho = 0$ and Assumption~\ref{ass:drift} yields
\begin{align}
J_{F3}
&\le \sum_{i=0}^{n-1}\int_{t_{i}}^{t_{i+1}}
\left\|S_{h,k}^{n-i}P_{h}\bigl(F(t_{i},X(t_{i}))-F(t_i,X_h^{i})\bigr)\right\|_{L^{p}(\Omega;H)}\,d\sigma \notag \\
&\leq C \sum_{i=0}^{n-1}\int_{t_{i}}^{t_{i+1}}
\|X(t_{i})-X_h^{i}\|_{L^{p}(\Omega;H)}\,d\sigma \notag \\
&= C k \sum_{i=0}^{n-1}\|X(t_{i})-X_h^{i}\|_{L^{p}(\Omega;H)}. \label{eq:bound_JF3}
\end{align}
For $J_{F2}$, applying the substitutions $\tau = t_{n}-\sigma$ and $j = n - i$, adding and subtracting $F(t_{n-1},X(t_{n-1}))$, and using the triangle inequality to divide the error into two terms yields
\begin{align}
J_{F2}
&= \left\|\sum_{j=1}^{n}\int_{t_{j-1}}^{t_{j}}
\bigl(S(\tau)-S_{h,k}^{j}P_{h}\bigr)F(t_{n-j}, X(t_{n-j}))\,d\tau\right\|_{L^{p}(\Omega;H)} \notag \\
&\leq \left\|\sum_{j=1}^{n}\int_{t_{j-1}}^{t_{j}}
\bigl(S(\tau)-S_{h,k}^{j}P_{h}\bigr)\bigl(F(t_{n-j},X(t_{n-j}))-F(t_{n-1},X(t_{n-1}))\bigr)\,d\tau\right\|_{L^{p}(\Omega;H)} \notag \\
&\quad+ \left\|\sum_{j=1}^{n}\int_{t_{j-1}}^{t_{j}}
\bigl(S(\tau)-S_{h,k}^{j}P_{h}\bigr)F(t_{n-1},X(t_{n-1}))\,d\tau\right\|_{L^{p}(\Omega;H)} \notag \\
&\coloneqq J_{F2}^{(1)} + J_{F2}^{(2)}. \label{eq:split_JF2}
\end{align}
Applying Lemma~\ref{lem:fully_discrete_error_estimates}(ii) and Corollary~\ref{cor:drift_bound} to $J_{F2}^{(2)}$ yields
\begin{align}
J_{F2}^{(2)}
&\le C\bigl(h^{2}+k\bigr)\,\|F(t_{n-1},X(t_{n-1}))\|_{L^{p}(\Omega;H)} \notag \\
&\le C\bigl(h^{2}+k\bigr)\,\bigl(1 + \|X_0\|_{L^{p}(\Omega;H)}\bigr). \label{eq:bound_JF2_2}
\end{align}
For $J_{F2}^{(1)}$, we write $S(\tau) - S_{h,k}^j P_h = S(\tau)\bigl(I - S(t_j-\tau)\bigr) + \bigl(S(t_j) - S_{h,k}^j P_h\bigr)$ and split the integration at $t_{n-1}$. Applying Lemma~\ref{lem:semigroup_estimates}, Lemma~\ref{lem:fully_discrete_error_estimates}, Assumption~\ref{ass:drift}, and \eqref{eq:consolidated_temporal_bound} yields
\begin{align}
J_{F2}^{(1)}
&\leq \sum_{j=1}^{n-1}\int_{t_{j-1}}^{t_{j}}
\left( \|S(\tau)\bigl(I-S(t_{j}-\tau)\bigr)\|_{\mathcal{L}(H)} + \|S(t_{j})-S_{h,k}^{j}P_{h}\|_{\mathcal{L}(H)} \right) \notag \\
&\hspace{3.5cm} \times \|F(t_{n-j},X(t_{n-j}))-F(t_{n-1},X(t_{n-1}))\|_{L^{p}(\Omega;H)}\,d\tau \notag \\
&\quad
+ \int_{t_{n-1}}^{t_{n}}
\left( \|S(\tau)\bigl(I-S(t_{n}-\tau)\bigr)\|_{\mathcal{L}(H)} + \|S(t_{n})-S_{h,k}^{n}P_{h}\|_{\mathcal{L}(H)} \right) \notag \\
&\hspace{3.5cm} \times \|F(0,X(0))-F(t_{n-1},X(t_{n-1}))\|_{L^{p}(\Omega;H)}\,d\tau \notag \\
&\leq C \bigl(1 + \|X_{0}\|_{L^{p}(\Omega;\dot H^{\nu})}\bigr) \sum_{j=1}^{n-1}\int_{t_{j-1}}^{t_{j}}
\bigl( \tau^{-\frac{\beta}{2}}(t_{j}-\tau)^{\frac{\beta}{2}} + (h^{\beta}+k^{\frac{\beta}{2}})\,t_{j}^{-\frac{\beta}{2}} \bigr) \, t_{n-j}^{-\frac{\beta - \nu}{2}}\,(t_{n-1}-t_{n-j})^{\frac{\min(\beta,1)}{2}}\,d\tau \notag \\
&\quad + C \int_{t_{n-1}}^{t_{n}} \|F(0,X(0)) - F(t_{n-1},X(t_{n-1}))\|_{L^{p}(\Omega;H)}\,d\tau. \label{eq:JF2_1_split}
\end{align}
Using $(t_{j}-\tau) \le k$, $t_j \ge \tau$, $(t_{n-1}-t_{n-j}) \le \tau$, and the shift $t_{n-j} \ge \frac{1}{2}(t_n - \tau)$ for $j \le n-1$, we bound the summation terms in \eqref{eq:JF2_1_split} by
\begin{align}
    &\sum_{j=1}^{n-1}\int_{t_{j-1}}^{t_{j}}
    \bigl( \tau^{-\frac{\beta}{2}}(t_{j}-\tau)^{\frac{\beta}{2}} + (h^{\beta}+k^{\frac{\beta}{2}})\,t_{j}^{-\frac{\beta}{2}} \bigr) \, t_{n-j}^{-\frac{\beta - \nu}{2}}\,(t_{n-1}-t_{n-j})^{\frac{\min(\beta,1)}{2}}\,d\tau \notag \\
    &\quad \leq C\bigl(h^{\beta}+k^{\frac{\beta}{2}}\bigr) \int_{0}^{t_{n}} \tau^{\frac{\min(\beta,1)-\beta}{2}}\, (t_n-\tau)^{-\frac{\beta-\nu}{2}}\,d\tau \notag \\
    &\quad = C\bigl(h^{\beta}+k^{\frac{\beta}{2}}\bigr) t_n^{1+\frac{\min(\beta,1)-\beta}{2} - \frac{\beta-\nu}{2}} \, B\left(1+\frac{\min(\beta,1)-\beta}{2}, \, 1-\frac{\beta-\nu}{2}\right) \notag \\
    &\quad \leq C\bigl(h^{\beta}+k^{\frac{\beta}{2}}\bigr)\,t_n^{-\frac{\beta-\nu}{2}}. \label{eq:beta_integral_bound}
\end{align}
For the final subinterval, applying the triangle inequality and Corollary~\ref{cor:drift_bound} yields
\begin{equation}\label{eq:last_interval_bound}
\int_{t_{n-1}}^{t_{n}} \|F(0,X(0)) - F(t_{n-1},X(t_{n-1}))\|_{L^{p}(\Omega;H)}\,d\tau \le C k \bigl( 1 + \|X_0\|_{L^{p}(\Omega;H)} \bigr).
\end{equation}
Substituting \eqref{eq:beta_integral_bound} and \eqref{eq:last_interval_bound} into \eqref{eq:JF2_1_split}, utilizing the continuous embedding $\dot{H}^{\nu} \hookrightarrow H$, and observing that $k \le C k^{\frac{\beta}{2}} t_n^{-\frac{\beta-\nu}{2}}$ for $k \in (0,1]$ and $\beta \in (0,2]$, we obtain
\begin{equation}\label{eq:bound_JF2_1}
    J_{F2}^{(1)} \leq C\bigl(h^{\beta}+k^{\frac{\beta}{2}}\bigr)\,t_n^{-\frac{\beta-\nu}{2}} \bigl(1 + \|X_0\|_{L^{p}(\Omega;\dot H^{\nu})}\bigr).
\end{equation}
Finally, combining \eqref{eq:bound_JF2_2} and \eqref{eq:bound_JF2_1} yields the bound for $J_{F2}$:
\begin{equation}\label{eq:bound_JF2}
    J_{F2} \leq C\bigl(h^{\beta}+k^{\frac{\beta}{2}}\bigr)\,t_n^{-\frac{\beta-\nu}{2}} \bigl(1 + \|X_0\|_{L^{p}(\Omega;\dot H^{\nu})}\bigr).
\end{equation}
Next, we bound $J_{F1}$. To handle the singularity at the initial time, we split the sum at the first time step $t_1$:
\begin{align}
    J_{F1}
    &= \bigg\|\sum_{i=0}^{n-1}\int_{t_{i}}^{t_{i+1}}
    S(t_{n}-\sigma)\bigl(F(\sigma,X(\sigma))-F(t_{i},X(t_{i}))\bigr)\,
    d\sigma\bigg\|_{L^{p}(\Omega;H)} \notag \\
    & \leq \bigg\| \int_0^{t_1} S(t_{n}-\sigma)\bigl(F(\sigma,X(\sigma))-F(0,X(0))\bigr)\, d\sigma \bigg\|_{L^{p}(\Omega;H)} \notag \\
    &\quad + \bigg\|\sum_{i=1}^{n-1}\int_{t_{i}}^{t_{i+1}}
    S(t_{n}-\sigma)\bigl(F(\sigma,X(\sigma))-F(t_{i},X(t_{i}))\bigr)\,
    d\sigma\bigg\|_{L^{p}(\Omega;H)}. \label{eq:JF1_split}
\end{align}
Consider the first term in \eqref{eq:JF1_split}. Applying Lemma~\ref{lem:semigroup_estimates}(i) and Assumption~\ref{ass:drift}, we obtain
\begin{align}
    &\bigg\| \int_0^{t_1} S(t_{n}-\sigma)\bigl(F(\sigma,X(\sigma))-F(0,X(0))\bigr)\, d\sigma \bigg\|_{L^{p}(\Omega;H)} \notag \\
    &\quad \leq C \int_0^{t_1} \bigl(\sigma^{\beta/2} + \|X(\sigma)-X(0)\|_{L^{p}(\Omega;H)} \bigr)\,d\sigma \notag \\
    &\quad \leq C \int_0^{t_1} \sigma^{\beta/2} \, d\sigma + C \int_0^{t_1} \bigl(\|X(\sigma)\|_{L^{p}(\Omega;H)} + \|X(0)\|_{L^{p}(\Omega;H)}\bigr) \, d\sigma. \label{eq:JF1_first_term_bound}
\end{align}
Using \eqref{eq:base_spatial_regularity}, the continuous embedding $\dot{H}^{\nu} \hookrightarrow H$, and observing that $k^{1+\frac{\beta}{2}} \le k \le k^{\frac{\beta}{2}}$ (since $k \le 1$), we evaluate the integrals:
\begin{align}
    \bigg\| \int_0^{t_1} S(t_{n}-\sigma)\bigl(F(\sigma,X(\sigma))-F(0,X(0))\bigr)\, d\sigma \bigg\|_{L^{p}(\Omega;H)}
    &\leq C k^{1+\frac{\beta}{2}} + C k \bigl(1 + \|X_0\|_{L^{p}(\Omega;\dot H^{\nu})}\bigr) \notag \\
    & \leq C k^{\frac{\beta}{2}} \bigl(1 + \|X_0\|_{L^{p}(\Omega;\dot H^{\nu})}\bigr). \label{eq:JF1_first_term_eval}
\end{align}
Substituting \eqref{eq:JF1_first_term_eval} back into \eqref{eq:JF1_split} yields 
\begin{align}
    J_{F1} &\leq C k^{\frac{\beta}{2}} \bigl(1 + \|X_0\|_{L^{p}(\Omega;\dot H^{\nu})}\bigr) \notag \\
    &\quad + \bigg\|\sum_{i=1}^{n-1}\int_{t_{i}}^{t_{i+1}}
    S(t_{n}-\sigma)\bigl(F(\sigma,X(\sigma))-F(t_{i},X(t_{i}))\bigr)\,
    d\sigma\bigg\|_{L^{p}(\Omega;H)}. \label{eq:bound_JF1_intermediate}
\end{align}
For the remaining summation in $J_{F1}$, applying Taylor's formula to $F$ with respect to the second variable yields
\begin{align}
F(\sigma,X(\sigma))-F(t_{i},X(t_{i}))
&= F(\sigma,X(\sigma)) - F(t_i,X(\sigma))
+ F^\prime(t_{i},X(t_{i}))(X(\sigma)-X(t_{i})) \notag \\
&\quad+ R_{F}(t_{i},X(t_{i}),X(\sigma)), \label{eq:taylors_formula}
\end{align}
where the integral remainder is given by
\begin{equation}\label{eq:taylor_remainder}
R_{F}(t_{i},X(t_{i}),X(\sigma))
\coloneqq
\int_{0}^{1} F^{\prime\prime}\bigl(t_{i},\,X(t_{i})+\lambda(X(\sigma)-X(t_{i}))\bigr)\bigl(X(\sigma)-X(t_{i}),X(\sigma)-X(t_{i})\bigr)\, (1-\lambda) \, d\lambda.
\end{equation}
To evaluate the first derivative term, we use the following representation of the solution starting from $t_i$:
\begin{align}\label{eq:mild_representation_t_i}
X(\sigma) - X(t_i)
= (S(\sigma-t_{i})-I)\,X(t_{i})
+ \int_{t_{i}}^{\sigma}S(\sigma-\tau)\,F\bigl(\tau,X(\tau)\bigr)\,d\tau
+ \int_{t_{i}}^{\sigma}S(\sigma-\tau)\,G(\tau)\,dW(\tau).
\end{align}
Substituting \eqref{eq:taylors_formula} and \eqref{eq:mild_representation_t_i} back into \eqref{eq:bound_JF1_intermediate}, and applying the triangle inequality, we split the bound into five distinct components:
\begin{align}
J_{F1} &\le C k^{\frac{\beta}{2}} \bigl(1 + \|X_0\|_{L^{p}(\Omega;\dot H^{\nu})}\bigr) \notag \\
&\quad + \left\|\sum_{i=1}^{n-1}\int_{t_{i}}^{t_{i+1}}
S(t_{n}-\sigma)\,\bigl(F(\sigma,X(\sigma)) - F(t_i,X(\sigma))\bigr)\,d\sigma\right\|_{L^{p}(\Omega;H)} \notag \\
& \quad + \left\|\sum_{i=1}^{n-1}\int_{t_{i}}^{t_{i+1}}
S(t_{n}-\sigma)\,F^\prime(t_{i},X(t_{i}))\,(S(\sigma-t_{i})-I)\,X(t_{i})\,d\sigma\right\|_{L^{p}(\Omega;H)} \notag \\
& \quad+ \left\|\sum_{i=1}^{n-1}\int_{t_{i}}^{t_{i+1}}
S(t_{n}-\sigma)\,F^\prime(t_{i},X(t_{i}))
\int_{t_{i}}^{\sigma}
S(\sigma-\tau)\,F\bigl(\tau,X(\tau)\bigr)\,d\tau\,d\sigma\right\|_{L^{p}(\Omega;H)} \notag \\
&\quad + \left\|\sum_{i=1}^{n-1}\int_{t_{i}}^{t_{i+1}}
S(t_{n}-\sigma)\,F^\prime(t_{i},X(t_{i}))
\int_{t_{i}}^{\sigma}
S(\sigma-\tau)\,G(\tau)\,dW(\tau)\,d\sigma\right\|_{L^{p}(\Omega;H)} \notag \\
&\quad + \left\|\sum_{i=1}^{n-1}\int_{t_{i}}^{t_{i+1}}
S(t_{n}-\sigma)\,
R_{F}\bigl(t_{i},X(t_{i}),X(\sigma)\bigr)\,
d\sigma\right\|_{L^{p}(\Omega;H)} \notag \\
    &\coloneqq C k^{\frac{\beta}{2}} \bigl(1 + \|X_0\|_{L^{p}(\Omega;\dot H^{\nu})}\bigr) + J_{F1}^{(1)} + J_{F1}^{(2)} + J_{F1}^{(3)} + J_{F1}^{(4)} + J_{F1}^{(5)}. \label{eq:bound_JF1_split5}
\end{align}
To estimate $J_{F1}^{(1)}$, applying Lemma~\ref{lem:semigroup_estimates}(i) (with $\gamma=0$) and Assumption~\ref{ass:drift} yields
\begin{align}
    J_{F1}^{(1)} 
    &= \bigg\|\sum_{i=1}^{n-1}\int_{t_{i}}^{t_{i+1}}
    S(t_{n}-\sigma)\,\bigl(F(\sigma,X(\sigma)) - F(t_i,X(\sigma))\bigr)\,d\sigma\bigg\|_{L^{p}(\Omega;H)} \notag \\
    & \le \sum_{i=1}^{n-1}\int_{t_{i}}^{t_{i+1}}
    \|S(t_{n}-\sigma)\|_{\mathcal{L}(H)}\,\|F(\sigma,X(\sigma)) - F(t_i,X(\sigma))\|_{L^{p}(\Omega;H)}\,d\sigma \notag \\
    &\le C \sum_{i=1}^{n-1} \int_{t_i}^{t_{i+1}} (\sigma - t_i)^{\frac{\beta}{2}} \, d\sigma \notag \\
    &= C \sum_{i=1}^{n-1} \frac{2}{\beta + 2} \, k^{\frac{\beta}{2} + 1} \le C \left( \sum_{i=1}^{n-1} k \right) k^{\frac{\beta}{2}} \le C k^{\frac{\beta}{2}}. \label{eq:bound_JF1_1}
\end{align}
For $J_{F1}^{(2)}$, applying Lemma~\ref{lem:semigroup_estimates}(i) (with $\gamma=0$), Assumption~\ref{ass:drift}, we obtain
\begin{align}
J_{F1}^{(2)}
&= \bigg\|\sum_{i=1}^{n-1}\int_{t_{i}}^{t_{i+1}}
S(t_{n}-\sigma)\,F^\prime(t_{i},X(t_{i}))\,\bigl(S(\sigma-t_{i})-I\bigr)X(t_{i})\,
d\sigma\bigg\|_{L^{p}(\Omega;H)} \notag \\
& \leq C \sum_{i=1}^{n-1}\int_{t_{i}}^{t_{i+1}}
\|\bigl(S(\sigma-t_{i})-I\bigr)X(t_{i})\|_{L^{p}(\Omega;H)}\, d\sigma \notag \\
& \leq C \sum_{i=1}^{n-1}\int_{t_{i}}^{t_{i+1}}
\|\bigl(S(\sigma-t_{i})-I\bigr)A^{-\frac{\beta}{2}}\|_{\mathcal{L}(H)} \|A^{\frac{\beta}{2}}X(t_{i})\|_{L^{p}(\Omega;H)}\, d\sigma.
\end{align}
Applying Lemma~\ref{lem:semigroup_estimates}(ii) and Theorem~\ref{thm:singular_regularity}(i), and noting that $\sigma - t_i \le k$ on each subinterval yields
\begin{align}
J_{F1}^{(2)}
& \leq C\, k^{\frac{\beta}{2}} \sum_{i=1}^{n-1}\int_{t_{i}}^{t_{i+1}} t_i^{-\frac{\beta - \nu}{2}} \bigl(1 + \|X_{0}\|_{L^{p}(\Omega;\dot H^{\nu})}\bigr) \, d\sigma. \label{eq:JF1_2_integral_sum}
\end{align}
Since the sum starts at $i=1$, we have $t_i \ge \frac{1}{2}\sigma$ for all $\sigma \in [t_i, t_{i+1}]$. This implies $t_i^{-\frac{\beta-\nu}{2}} \le C \sigma^{-\frac{\beta-\nu}{2}}$. Substituting this into \eqref{eq:JF1_2_integral_sum} yields
\begin{align}
J_{F1}^{(2)}
& \leq C\, k^{\frac{\beta}{2}} \bigl(1 + \|X_{0}\|_{L^{p}(\Omega;\dot H^{\nu})}\bigr) \int_{t_{1}}^{t_{n}} \sigma^{-\frac{\beta - \nu}{2}} \, d\sigma \notag \\
& \leq C\, k^{\frac{\beta}{2}} \bigl(1 + \|X_{0}\|_{L^{p}(\Omega;\dot H^{\nu})}\bigr) \int_{0}^{t_{n}} \sigma^{-\frac{\beta - \nu}{2}} \, d\sigma \notag \\
& \leq C\, k^{\frac{\beta}{2}} \, t_n^{1-\frac{\beta - \nu}{2}} \bigl(1 + \|X_{0}\|_{L^{p}(\Omega;\dot H^{\nu})}\bigr) \notag \\
& \leq C \, k^{\frac{\beta}{2}} \bigl(1 + \|X_{0}\|_{L^{p}(\Omega;\dot H^{\nu})}\bigr). \label{eq:bound_JF1_2}
\end{align}
For the third component $J_{F1}^{(3)}$, applying Lemma~\ref{lem:semigroup_estimates}(i) (with $\gamma=0$) alongside Assumption~\ref{ass:drift} and Corollary~\ref{cor:drift_bound} yields
\begin{align}
J_{F1}^{(3)}
&\le \sum_{i=1}^{n-1}\int_{t_{i}}^{t_{i+1}}
\int_{t_{i}}^{\sigma}
\|S(t_{n}-\sigma)\,F^\prime(t_{i},X(t_{i}))\,S(\sigma-\tau)\,F(\tau,X(\tau))\|_{L^{p}(\Omega;H)}
\,d\tau\,d\sigma \notag \\
&\le C\sum_{i=1}^{n-1}\int_{t_{i}}^{t_{i+1}}
\int_{t_{i}}^{\sigma}
\|F(\tau,X(\tau))\|_{L^{p}(\Omega;H)}\,d\tau\,d\sigma \notag \\
&\le C \bigl(1 + \|X_0\|_{L^{p}(\Omega;H)}\bigr) \sum_{i=1}^{n-1}\int_{t_{i}}^{t_{i+1}} (\sigma - t_i) \,d\sigma \notag \\
&\le C\,k \bigl(1 + \|X_0\|_{L^{p}(\Omega;H)}\bigr). \label{eq:bound_JF1_3}
\end{align}
To bound $J_{F1}^{(4)}$, we define the sequence of random variables
\begin{equation*}
Z_{i} \coloneqq \int_{t_{i}}^{t_{i+1}}
S(t_{n}-\sigma)\,F^\prime(t_{i},X(t_{i}))\,
\int_{t_{i}}^{\sigma}
S(\sigma-\tau)\,G(\tau)\,dW(\tau)\,
d\sigma, \quad \text{for } i=1, \dots, n-1.
\end{equation*}
Note that $\{Z_i\}_{i=1}^{n-1}$ satisfies condition~\eqref{eq:martingale_difference_sequence}, hence we apply the discrete Burkholder--Davis--Gundy inequality (Lemma~\ref{lem:discrete_BDG_inequality}) to obtain
\begin{equation}\label{eq:bound_JF1_4_intermideate}
J_{F1}^{(4)}
= \bigg\|\sum_{i=1}^{n-1}Z_{i}\bigg\|_{L^{p}(\Omega;H)}
\le C_p \bigg(\sum_{i=1}^{n-1}\|Z_{i}\|_{L^{p}(\Omega;H)}^{2}\bigg)^{\!1/2}.
\end{equation}
To estimate $\|Z_i\|_{L^{p}(\Omega;H)}^{2}$, we first apply Minkowski's integral inequality followed by the Cauchy--Schwarz inequality. Subsequently, the continuous Burkholder--Davis--Gundy inequality (Lemma~\ref{lem:BDG}), Assumption~\ref{ass:drift}, Lemma~\ref{lem:semigroup_estimates}(i), and the inequality $\|LM\|_{\mathrm{HS}(U_0,H)} \leq \|L\|_{\mathcal{L}(H)}\|M\|_{\mathrm{HS}(U_0,H)}$ yield
\begin{align}
\|Z_{i}\|_{L^{p}(\Omega;H)}^{2}
&\le k \int_{t_{i}}^{t_{i+1}}
\bigg\|\int_{t_{i}}^{\sigma}
S(t_{n}-\sigma)\,F^\prime(t_{i},X(t_{i}))\,
S(\sigma-\tau)\,G(\tau)\,dW(\tau)
\bigg\|_{L^{p}(\Omega;H)}^{2}
\,d\sigma \notag \\
&\le C\,k \int_{t_{i}}^{t_{i+1}}
\int_{t_{i}}^{\sigma}
\|S(t_{n}-\sigma)\,F^\prime(t_{i},X(t_{i}))\,
S(\sigma-\tau)\,G(\tau)\|_{\mathrm{HS}(U_0,H)}^{2}
\,d\tau\,d\sigma \notag \\
&\le C\,k \int_{t_{i}}^{t_{i+1}}
\int_{t_{i}}^{\sigma}
\|A^{\frac{1-\beta}{2}}S(\sigma-\tau)\|_{\mathcal{L}(H)}^{2} \|G(\tau)\|_{\mathrm{HS}(U_0,\dot{H}^{\beta-1})}^{2}
\,d\tau\,d\sigma \notag \\
&\le C\,k \int_{t_{i}}^{t_{i+1}}
\int_{t_{i}}^{\sigma} (\sigma - \tau)^{-(1-\beta)^+}
\|G(\tau)\|_{\mathrm{HS}(U_0,\dot{H}^{\beta-1})}^{2}
\,d\tau\,d\sigma \notag \\
&\le C\,k^{\min(3,2+\beta)}. \label{eq:bound_Zi}
\end{align}
Substituting \eqref{eq:bound_Zi} into \eqref{eq:bound_JF1_4_intermideate} and noting that $(n-1)k < n k = t_n \le T$, we obtain
\begin{equation}\label{eq:bound_JF1_4}
J_{F1}^{(4)} \le C\,k^{\min\left(1,\frac{\beta+1}{2}\right)}.
\end{equation}
To bound the remainder term $J_{F1}^{(5)}$, we apply Lemma~\ref{lem:semigroup_estimates}(i) with $\gamma =\frac{\vartheta}{2}$ to obtain
\begin{align}
J_{F1}^{(5)}
&= \bigg\|\sum_{i=1}^{n-1}\int_{t_{i}}^{t_{i+1}}
S(t_{n}-\sigma)\, R_{F}\bigl(t_{i},X(t_{i}),X(\sigma)\bigr)\, d\sigma\bigg\|_{L^{p}(\Omega;H)} \notag \\
&\leq C\sum_{i=1}^{n-1} \int_{t_{i}}^{t_{i+1}}
(t_{n}-\sigma)^{-\frac{\vartheta}{2}}\, \bigl\|A^{-\frac{\vartheta}{2}}R_{F}\bigl(t_i,X(t_{i}),X(\sigma)\bigr)\bigr\|_{L^{p}(\Omega;H)}\, d\sigma. \label{eq:bound_JF1_5_intermediate1}
\end{align}
From \eqref{eq:taylor_remainder} and Assumption~\ref{ass:drift}, we obtain
\begin{equation}\label{eq:bound_RF}
\bigl\|A^{-\frac{\vartheta}{2}}R_{F}(t_i,X(t_{i}),X(\sigma))\bigr\|_{L^{p}(\Omega;H)}
\leq C\,\|X(\sigma)-X(t_{i})\|_{L^{p}(\Omega;H)}^{2}.
\end{equation}
Substituting \eqref{eq:bound_RF} into \eqref{eq:bound_JF1_5_intermediate1}, invoking Theorem~\ref{thm:singular_regularity}(ii), and applying the inequality $(a+b)^2 \le 2(a^2 + b^2)$, we get
\begin{align*}
J_{F1}^{(5)}
&\leq C\sum_{i=1}^{n-1} \int_{t_{i}}^{t_{i+1}}
(t_{n}-\sigma)^{-\frac{\vartheta}{2}}\,\|X(\sigma)-X(t_{i})\|_{L^{p}(\Omega;H)}^{2} \,d\sigma \\
&\leq C\sum_{i=1}^{n-1} \int_{t_{i}}^{t_{i+1}}
(t_{n}-\sigma)^{-\frac{\vartheta}{2}}\,\left( (\sigma-t_i)^{\frac{\beta}{2}} \, t_i^{-\frac{\beta -\nu}{2}} + (\sigma-t_i)^{\frac{\min(\beta,1)}{2}} \right)^2\,d\sigma \\
&\leq C\sum_{i=1}^{n-1} \int_{t_{i}}^{t_{i+1}}
(t_{n}-\sigma)^{-\frac{\vartheta}{2}}\,\left( (\sigma-t_i)^\beta \, t_i^{-(\beta -\nu)} + (\sigma-t_i)^{\min(\beta,1)} \right)\,d\sigma.
\end{align*}
To evaluate this, we apply the bound $\sigma - t_i \le k$ and the inequality $t_i^{-(\beta-\nu)} \le C \sigma^{-(\beta-\nu)}$. This splits the bound into
\begin{align*}
J_{F1}^{(5)}
&\leq C\sum_{i=1}^{n-1} \int_{t_{i}}^{t_{i+1}}
(t_{n}-\sigma)^{-\frac{\vartheta}{2}}\, (\sigma-t_i)^{\frac{\beta}{2}}\,(\sigma-t_i)^{\frac{\beta}{2}}\, t_i^{-(\beta -\nu)} \,d\sigma \\
&\quad + C\,k^{\min(\beta,1)} \sum_{i=1}^{n-1} \int_{t_{i}}^{t_{i+1}} (t_{n}-\sigma)^{-\frac{\vartheta}{2}} \,d\sigma \\
&\leq C\,k^{\frac{\beta}{2}} \int_{0}^{t_{n}}
(t_{n}-\sigma)^{-\frac{\vartheta}{2}}\, \sigma^{-\frac{\beta}{2} + \nu} \,d\sigma + C\,k^{\min(\beta,1)}.
\end{align*}
Evaluating the remaining integral via the Beta function $B(x,y)$, we find
\begin{align}
J_{F1}^{(5)}
&\leq C\,k^{\frac{\beta}{2}} \, t_n^{1- \frac{\vartheta}{2} -\frac{\beta}{2} + \nu } \, B\Bigl(1- \frac{\vartheta}{2}, 1 + \nu - \frac{\beta}{2}\Bigr) + C\,k^{\min(\beta,1)} \notag \\
&= C\,k^{\frac{\beta}{2}} \, t_n^{-\frac{\beta-\nu}{2}} \, t_n^{1- \frac{\vartheta}{2} + \frac{\nu}{2} } + C\,k^{\min(\beta,1)} \notag \\
&\leq C\,k^{\frac{\beta}{2}} \, t_n^{-\frac{\beta-\nu}{2}}. \label{eq:bound_JF1_5}
\end{align}
Collecting the bounds for $J_{F1}^{(1)}$ through $J_{F1}^{(5)}$, provides the final estimate for $J_{F1}$:
\begin{equation}\label{eq:bound_JF1}
J_{F1} \leq C\,k^{\frac{\beta}{2}} \, t_n^{-\frac{\beta-\nu}{2}} \bigl(1 + \|X_0\|_{L^{p}(\Omega;\dot H^{\nu})}\bigr).
\end{equation}
Finally substituting \eqref{eq:bound_JF1}, \eqref{eq:bound_JF2}, and \eqref{eq:bound_JF3} into \eqref{eq:bound_JF}, we obtain the drift error bound
\begin{align}
J_{F} &\leq C\,k^{\frac{\beta}{2}} \, t_n^{-\frac{\beta-\nu}{2}}\bigl(1 + \|X_0\|_{L^{p}(\Omega;\dot H^{\nu})}\bigr) \notag \\
&\quad + C\bigl(h^{\beta}+k^{\frac{\beta}{2}}\bigr)\,t_n^{-\frac{\beta-\nu}{2}}\bigl(1 + \|X_0\|_{L^{p}(\Omega;\dot H^{\nu})}\bigr) \notag \\
&\quad + C\,k\sum_{i=0}^{n-1}\|X(t_{i})-X_h^{i}\|_{L^{p}(\Omega;H)} \notag \\
&\leq C\bigl(h^{\beta}+k^{\frac{\beta}{2}}\bigr)\,t_n^{-\frac{\beta-\nu}{2}}\bigl(1 + \|X_0\|_{L^{p}(\Omega;\dot H^{\nu})}\bigr) 
+ C\,k\sum_{i=0}^{n-1}\|X(t_{i})-X_h^{i}\|_{L^{p}(\Omega;H)}. \label{eq:bound_JF_total}
\end{align}
To bound the diffusion error $J_G$, we first apply the continuous Burkholder--Davis--Gundy inequality (Lemma~\ref{lem:BDG}). Performing the change of variables $\tau = t_{n} - \sigma$ alongside the index shift $j = n - i$, and adding and subtracting $S(\tau)G(t_{n-j})$, we separate the error into two components:
\begin{align}
J_{G}
&\le
C_{p}\Biggl(\sum_{i=0}^{n-1}\int_{t_{i}}^{t_{i+1}}
\bigl\|S(t_{n}-\sigma)\,G(\sigma)
- S_{h,k}^{n-i}P_{h}\,G(t_{i})\bigr\|_{\mathrm{HS}(U_0,H)}^{2}
\,d\sigma\Biggr)^{\!1/2} \notag \\
&=
C_{p}\Biggl(\sum_{j=1}^{n}\int_{t_{j-1}}^{t_{j}}
\bigl\|S(\tau)\,G(t_{n}-\tau)
- S_{h,k}^{j}P_{h}\,G(t_{n-j})\bigr\|_{\mathrm{HS}(U_0,H)}^{2}
\,d\tau\Biggr)^{\!1/2} \notag \\
&\le
C_{p}\Biggl(\sum_{j=1}^{n}\int_{t_{j-1}}^{t_{j}}
\bigl\|S(\tau)\,\bigl(G(t_{n}-\tau)-G(t_{n-j})\bigr)\bigr\|_{\mathrm{HS}(U_0,H)}^{2}
\,d\tau\Biggr)^{\!1/2} \notag \\
&\quad+
C_{p}\Biggl(\sum_{j=1}^{n}\int_{t_{j-1}}^{t_{j}}
\bigl\|\bigl(S(\tau)-S_{h,k}^{j}P_{h}\bigr)\,G(t_{n-j})\bigr\|_{\mathrm{HS}(U_0,H)}^{2}
\,d\tau\Biggr)^{\!1/2} \notag \\
&\coloneqq J_{G1} + J_{G2}. \label{eq:split_JG}
\end{align}
To bound $J_{G1}$, we apply Assumption~\ref{ass:diffusion} alongside Lemma~\ref{lem:semigroup_estimates}(i) to obtain
\begin{align}
J_{G1}
&\le C_{p}
\Biggl(\sum_{j=1}^{n}\int_{t_{j-1}}^{t_{j}}
\bigl\|A^{\frac{1-\beta}{2}}S(\tau)\bigr\|_{\mathcal{L}(H)}^{2}
\bigl\|G(t_{n}-\tau)-G(t_{n-j})\bigr\|_{\mathrm{HS}(U_0,\dot{H}^{\beta-1})}^{2}
\,d\tau\Biggr)^{\!1/2} \notag \\
&\le C
\Biggl(\sum_{j=1}^{n}\int_{t_{j-1}}^{t_{j}} \tau^{-(1-\beta)^+}
\,(t_{j}-\tau)^{2\delta}
\,d\tau\Biggr)^{\!1/2}. \label{eq:bound_JG1_intermediate}
\end{align}
For $\tau \in [t_{j-1}, t_j]$, the bound $(t_j-\tau) \le k$ holds. Since Assumption~\ref{ass:diffusion} guarantees $\delta \ge \beta/2$ and $k \in (0,1]$, it follows that $(t_j-\tau)^{2\delta} \le k^{2\delta} \le k^\beta$, yielding
\begin{align}
J_{G1}
&\le C\,k^{\frac{\beta}{2}} \Biggl( \int_{0}^{t_n} \tau^{-(1-\beta)^+} \,d\tau \Biggr)^{\!1/2} \notag \\
&\le C\,k^{\frac{\beta}{2}}. \label{eq:bound_JG1}
\end{align}
To bound $J_{G2}$, we split it into two components by adding and subtracting $G(t_{n-1})$:
\begin{align}
J_{G2}
&\le \sqrt{2}\,C_{p}
\Biggl(\sum_{j=1}^{n}\int_{t_{j-1}}^{t_{j}}
\bigl\|\bigl(S(\tau)-S_{h,k}^{\,j}P_{h}\bigr)
\bigl(G(t_{n-j})-G(t_{n-1})\bigr)\bigr\|_{\mathrm{HS}(U_0,H)}^{2}
\,d\tau\Biggr)^{\!1/2} \notag \\
&\quad+ \sqrt{2}\,C_{p}
\Biggl(\sum_{j=1}^{n}\int_{t_{j-1}}^{t_{j}}
\bigl\|\bigl(S(\tau)-S_{h,k}^{\,j}P_{h}\bigr)G(t_{n-1})\bigr\|_{\mathrm{HS}(U_0,H)}^{2}
\,d\tau\Biggr)^{\!1/2} \notag \\
&\coloneqq J_{G2}^{(1)} + J_{G2}^{(2)}. \label{eq:split_JG2}
\end{align}
For the first term $J_{G2}^{(1)}$, applying Lemma~\ref{lem:fully_discrete_error_estimates}(i) with $\rho=\beta$ and $\eta=\beta-1$ yields
\begin{align}
J_{G2}^{(1)}
&\leq C \Biggl(\sum_{j=1}^{n}\int_{t_{j-1}}^{t_{j}} \tau^{-1}
\bigl(h^{\beta}+k^{\frac{\beta}{2}}\bigr)^2 \,
\|G(t_{n-j})-G(t_{n-1})\|_{\mathrm{HS}(U_0,\dot{H}^{\beta-1})}^{2}
\,d\tau\Biggr)^{\!1/2} \notag \\
&\leq C\,\bigl(h^{\beta}+k^{\frac{\beta}{2}}\bigr)
\Biggl(\sum_{j=1}^{n}\int_{t_{j-1}}^{t_{j}}
\tau^{-1}\,\|G(t_{n-j})-G(t_{n-1})\|_{\mathrm{HS}(U_0,\dot{H}^{\beta-1})}^{2}
\,d\tau\Biggr)^{\!1/2}. \label{eq:bound_JG2_1_intermediate}
\end{align}
Invoking Assumption~\ref{ass:diffusion} and noting that $t_{n-1} - t_{n-j} = t_{j-1}$, we observe that for $\tau \in [t_{j-1}, t_j]$, the bound $\tau \ge t_{j-1}$ implies $\tau^{2\delta} \ge t_{j-1}^{2\delta}$. This yields
\begin{align}
J_{G2}^{(1)}
&\leq C\,\bigl(h^{\beta}+k^{\frac{\beta}{2}}\bigr)
\Biggl(\sum_{j=1}^{n}\int_{t_{j-1}}^{t_{j}}
\tau^{-1}\,(t_{n-1} - t_{n-j})^{2\delta}
\,d\tau\Biggr)^{\!1/2} \notag \\
&\leq C\,\bigl(h^{\beta}+k^{\frac{\beta}{2}}\bigr)
\Biggl(\sum_{j=1}^{n}\int_{t_{j-1}}^{t_{j}}
\tau^{-1}\,\tau^{2\delta}
\,d\tau\Biggr)^{\!1/2} \notag \\
&= C\,\bigl(h^{\beta}+k^{\frac{\beta}{2}}\bigr)
\Biggl( \int_{0}^{t_n} \tau^{-1+2\delta} \,d\tau \Biggr)^{\!1/2}. \label{eq:JG2_1_integral}
\end{align}
Since $\delta \ge \beta/2 > 0$, the integral $\int_0^{t_n} \tau^{-1+2\delta} \, d\tau$ is finite and bounded by $T^{2\delta}/(2\delta)$. Hence, we obtain
\begin{equation}\label{eq:bound_JG2_1}
J_{G2}^{(1)} \leq C \,\bigl(h^{\beta}+k^{\frac{\beta}{2}}\bigr).
\end{equation}
For the second term $J_{G2}^{(2)}$, we expand the Hilbert--Schmidt norm using the orthonormal basis $\{\psi_m\}_{m=1}^\infty$ of \(U_0\). Applying Lemma~\ref{lem:fully_discrete_error_estimates}(iii) with $\rho=\beta$ and  Assumption~\ref{ass:diffusion} yields
\begin{align}
J_{G2}^{(2)}
&= C_{p}
\Biggl(\sum_{m=1}^{\infty}\sum_{j=1}^{n}\int_{t_{j-1}}^{t_{j}}
\bigl\|\bigl(S(\tau)-S_{h,k}^{\,j}P_{h}\bigr)\,G(t_{n-1})\psi_m\bigr\|_H^2
\,d\tau\Biggr)^{\!1/2} \notag \\
&\leq C\,\bigl(h^{\beta}+k^{\frac{\beta}{2}}\bigr)
\Biggl(\sum_{m=1}^{\infty}\bigl\|G(t_{n-1})\psi_m\bigr\|_{\dot{H}^{\beta-1}}^2\Biggr)^{\!1/2} \notag \\
&= C\,\bigl(h^{\beta}+k^{\frac{\beta}{2}}\bigr)
\,\|G(t_{n-1})\|_{\mathrm{HS}(U_0,\dot{H}^{\beta-1})} \notag \\
&\leq C\,\bigl(h^{\beta}+k^{\frac{\beta}{2}}\bigr). \label{eq:bound_JG2_2}
\end{align}
Substituting \eqref{eq:bound_JG2_1} and \eqref{eq:bound_JG2_2} in \eqref{eq:split_JG2}, we obtain
\begin{equation}\label{eq:bound_JG2}
J_{G2} \le J_{G2}^{(1)} + J_{G2}^{(2)} \le C\,\bigl(h^{\beta}+k^{\frac{\beta}{2}}\bigr).
\end{equation}
Finally, substituting the estimates for $J_{G1}$ \eqref{eq:bound_JG1} and $J_{G2}$ \eqref{eq:bound_JG2} back into \eqref{eq:split_JG} yields the bound for the diffusion error:
\begin{equation}\label{eq:bound_JG_total}
J_{G} \le J_{G1} + J_{G2} \le C\,\bigl(h^{\beta}+k^{\frac{\beta}{2}}\bigr).
\end{equation}
Substituting the estimates \eqref{eq:bound_J0}, \eqref{eq:bound_JF_total}, and \eqref{eq:bound_JG_total} into \eqref{eq:bound_fullydiscrete}, we obtain
\begin{align}
\|X(t_{n}) - X_h^n\|_{L^p(\Omega;H)}
&\le J_0 + J_F + J_G \notag \\
&\le C\bigl(h^{\beta}+k^{\frac{\beta}{2}}\bigr)\,t_{n}^{-\frac{\beta-\nu}{2}}\,\|X_{0}\|_{L^{p}(\Omega;\dot{H}^{\nu})} \notag \\
&\quad + C\bigl(h^{\beta}+k^{\frac{\beta}{2}}\bigr)\,t_n^{-\frac{\beta-\nu}{2}}\bigl(1 + \|X_0\|_{L^{p}(\Omega;\dot H^{\nu})}\bigr) \notag \\
&\quad + C\,k\sum_{i=0}^{n-1}\|X(t_{i})-X_h^{i}\|_{L^{p}(\Omega;H)} \notag \\
&\quad + C\bigl(h^{\beta}+k^{\frac{\beta}{2}}\bigr). \label{eq:bound_fully_discrete_final1}
\end{align}
Simplifying \eqref{eq:bound_fully_discrete_final1}, we obtain
\begin{align}
\|X(t_{n}) - X_h^n\|_{L^p(\Omega;H)}
&\le C\bigl(h^{\beta}+k^{\frac{\beta}{2}}\bigr) \Bigl( 1 + t_n^{-\frac{\beta-\nu}{2}} \Bigr) \bigl(1 + \|X_{0}\|_{L^{p}(\Omega;\dot{H}^{\nu})}\bigr) \notag \\
&\quad + C\,k\sum_{i=0}^{n-1}\|X(t_{i})-X_h^{i}\|_{L^{p}(\Omega;H)}. \label{eq:bound_fully_discrete_final2}
\end{align}
Setting $\varphi_n \coloneqq \|X(t_n) - X_h^n\|_{L^p(\Omega;H)}$ and $C_1 \coloneqq C\bigl(h^{\beta}+k^{\frac{\beta}{2}}\bigr) \bigl(1 + \|X_{0}\|_{L^{p}(\Omega;\dot{H}^{\nu})}\bigr)$, we rewrite \eqref{eq:bound_fully_discrete_final2} for $1 \le n \le N$ as
\begin{equation*}
    \varphi_n \le C_1 \left( 1 + t_n^{-\frac{\beta-\nu}{2}} \right) + C_2\,k \sum_{j=0}^{n-1} \varphi_j.
\end{equation*}
Since the exponent satisfies $\frac{\beta-\nu}{2} < 1$, applying the discrete Grönwall inequality (Lemma~\ref{lem:discrete_gronwall}) with $\theta = 1 - \frac{\beta-\nu}{2} > 0$ and $\alpha = 1 > 0$ yields
\begin{equation}\label{eq:final_intermediate}
    \varphi_n \le C C_1 \left( 1 + t_n^{-\frac{\beta-\nu}{2}} \right).
\end{equation}
Since $t_n \le T$, we use the uniform bound $1 \le T^{\frac{\beta-\nu}{2}} t_n^{-\frac{\beta-\nu}{2}}$. Substituting $C_1$ back into \eqref{eq:final_intermediate}, we obtain 
\begin{equation*}
    \|X(t_n) - X_h^n\|_{L^p(\Omega;H)} \le C \left(h^\beta + k^{\frac{\beta}{2}}\right) t_{n}^{-\frac{\beta-\nu}{2}} \left(1 + \|X_{0}\|_{L^{p}(\Omega;\dot{H}^{\nu})}\right).
\end{equation*}
This completes the proof of Theorem~\ref{thm:strong_convergence_of_fully_discrete_scheme}.
\end{proof}

%%%%%%%%%%%%%%%%%%%%%%%%%%%%%%%%%%%%%%%
\section{Numerical experiments}\label{sec:numerical_results}
%%%%%%%%%%%%%%%%%%%%%%%%%%%%%%%%%%%%%%%

In this section, we present numerical experiments to verify the strong convergence rates established in our theoretical analysis. To maintain a unified framework across different examples, we first define the general setting, the spatial regularity of the noise, the discretization methodology, and the error metrics used throughout this section.

Let $\mathcal{O}=(0,1)$ and set $H = U = L^2(0,1)$. The linear operator $A = -\frac{\partial^2}{\partial x^2}$ is defined on the domain $\mathcal{D}(A) = H^2(0,1) \cap H_0^1(0,1)$ and satisfies Assumption~\ref{ass:operator_A}. Its orthonormal eigenfunctions are given by $e_i(x) = \sqrt{2}\sin(i\pi x)$ with corresponding eigenvalues $\eta_i = i^2\pi^2$. We assume $W$ is a $Q$-Wiener process on $H$, where the covariance operator $Q$ shares the eigenfunctions $\{e_i\}_{i\in\mathbb{N}}$ with $A$. Consequently, $W$ admits the Karhunen--Lo\`eve expansion 
\[
    W(t,x) = \sum_{i=1}^{\infty} \sqrt{\lambda_i} e_i(x) \beta_i(t),
\]
where $\{\beta_i\}_{i\in\mathbb{N}}$ is a sequence of independent real-valued Brownian motions, and $\lambda_i$ are the eigenvalues of $Q$.

\paragraph{Noise regularity.}
To control the spatial regularity of the noise and verify Assumption~\ref{ass:diffusion}, we follow the approach of Lord et al.~\cite[Example~10.9]{MR3308418} and parameterize the covariance spectrum as $\lambda_i = i^{-(2q + 1 + \delta)}$ for an arbitrarily small constant $\delta > 0$ (set to $\delta = 0.001$ in our implementation). Setting $G(t) \equiv I$, we evaluate the Hilbert--Schmidt norm as follows:
\begin{equation}\label{eq:hs_derivation}
    \|I\|_{\mathrm{HS}(U_0, \dot{H}^{\beta-1})}^2 = \sum_{i=1}^{\infty} \bigl\| A^{(\beta-1)/2} Q^{1/2} e_i \bigr\|_H^2 = \pi^{2\beta-2} \sum_{i=1}^{\infty} i^{2\beta-2} \lambda_i = \pi^{2\beta-2}\sum_{i=1}^{\infty} i^{2\beta - 2q - 3 - \delta}.
\end{equation}
By the $p$-series test, the series converges if and only if $2q + 3 + \delta - 2\beta > 1$, which simplifies to $\beta < q + 1 + \delta/2$. Consequently, selecting the simulation parameters $q \in \{1, 0.5, 0\}$ corresponds to trace-class noise and ensures that Assumption~\ref{ass:diffusion} is satisfied for $\beta \in \{2, 1.5, 1\}$, respectively. Furthermore, setting $q = -0.5$ (with $\delta=0$) yields $\lambda_i = 1$, which corresponds to space-time white noise and satisfies Assumption~\ref{ass:diffusion} for $\beta < 0.5$.

\paragraph{Discretization.}
We discretize the SPDE in space using standard continuous piecewise linear finite elements on a uniform grid of $N_x$ intervals (with mesh size $h = 1/N_x$), and in time using the linear implicit Euler method with $N_t$ steps (with step size $k = T/N_t$). To match the dimension of the finite element space, the Karhunen--Lo\`eve expansion of the noise is truncated at $N_x + 1$ terms.

Since an exact analytical solution is unavailable, we measure the strong approximation error against a reference solution $U_{\text{ref}}$ computed on a highly refined grid with $N_{x,\text{ref}} = 256$ and $N_{t,\text{ref}} = 65536$. For our convergence tests, we use a sequence of coarse grids defined by $N_x = 2^{p+1}$ for refinement levels $p \in \{1, 2, 3, 4, 5\}$, and we couple the time discretization by setting $N_t = N_x^2$. All numerical simulations were implemented in MATLAB, utilizing parallel computing to efficiently process the Monte Carlo sample paths.

\paragraph{Error measurement.}
We measure the strong error in the $L^\infty(0,T; L^2(\Omega; H))$ norm. For the discrete time steps $t_n = n k$, the standard maximum strong error is approximated via a Monte Carlo average over $N_{\text{sim}}$ independent realizations:
\begin{equation}\label{eq:standard_error}
    e_{\text{max}}(h,k) \approx \max_{1 \le n \le N_t} \left( \frac{1}{N_{\text{sim}}} \sum_{j=1}^{N_{\text{sim}}} \bigl\| U^n(\omega_j) - U_{\text{ref}}(t_n, \omega_j) \bigr\|_H^2 \right)^{\!1/2}.
\end{equation}

To verify the theoretical bounds in Theorem~\ref{thm:strong_convergence_of_fully_discrete_scheme} for non-smooth initial data, we also compute a time-weighted error. The theoretical bound contains an initial singularity of order $O(t_n^{-(\beta-\nu)/2})$. Because $\beta \in (0,2]$ and $\nu > 0$, this singularity exponent is bounded above by $1$. Rather than calculating the exact fractional regularity $\nu$ for each initial condition, we introduce a linear time weight $t_n$ and define the weighted error metric as:
\begin{equation}\label{eq:weighted_error}
    e_{\text{weighted}}(h,k) \approx \max_{1 \le n \le N_t} \left( t_n \left( \frac{1}{N_{\text{sim}}} \sum_{j=1}^{N_{\text{sim}}} \bigl\| U^n(\omega_j) - U_{\text{ref}}(t_n, \omega_j) \bigr\|_H^2 \right)^{\!1/2} \right).
\end{equation}
Multiplying by $t_n$ uniformly overcompensates for the theoretical singularity. Since $t_n \in (0,T]$ and $1 - (\beta-\nu)/2 \ge 0$, the product $t_n^{1 - (\beta-\nu)/2}$ remains bounded. This completely neutralizes the transient initial blow-up near $t=0$, allowing us to recover the optimal convergence rates of $O(h^\beta + k^{\beta/2})$.

%%%%%%%%%%%%%%%%%%%%%%%%%%%%%%%%%%
\subsection{Smooth initial data}
%%%%%%%%%%%%%%%%%%%%%%%%%%%%%%%%%%
\begin{example}
For our first example, we consider the following one-dimensional semilinear stochastic heat equation driven by additive noise
\begin{equation}\label{eq:num_spde}
\begin{cases}
    \frac{\partial u}{\partial t}(t,x) = \frac{\partial^2 u}{\partial x^2}(t,x) + \sin(u(t,x)) + \dot{W}(t,x), & t \in (0, 1], \; x \in (0, 1), \\
    u(0, x) = \sin(\pi x), & x \in (0, 1), \\
    u(t, 0) = u(t, 1) = 0, & t \in (0, 1].
\end{cases}
\end{equation}
\end{example}
Because the initial data $u_0(x) = \sin(\pi x)$ is smooth, the convergence rate is not constrained by the initial condition, but is entirely governed by the spatial regularity of the noise $\beta$. Tables~\ref{tab:smooth_convergence_q1}--\ref{tab:smooth_convergence_q-0.5} confirm the theoretical strong error bound of $\mathcal{O}(h^\beta + k^{\beta/2})$, demonstrating the optimal spatial $(R_x)$ and temporal $(R_t)$ rates of convergence. Since the smooth initial profile introduces no transient singularity, the standard and time-weighted metrics yield identical convergence rates.

\begin{table}[H]
\centering
\caption{Strong approximation errors and convergence rates for $q=1$ ($\beta=2$). The theoretical optimal rates are $R_t = 1$ and $R_x = 2$. Parameters: $N_{\text{sim}}=250$, $N_{x,\text{ref}}=256$, $N_{t,\text{ref}}=65536$.}
\label{tab:smooth_convergence_q1}
\begin{tabular}{@{} l l c c c c c c @{}}
\toprule
\multirow{2}{*}{$N_t$} & \multirow{2}{*}{$N_x$} & \multicolumn{3}{c}{Standard Error Metric} & \multicolumn{3}{c}{Time-Weighted Metric ($t_n^1$)} \\
\cmidrule(lr){3-5} \cmidrule(l){6-8}
& & $e_{\max}(h,k)$ & $R_t$ & $R_x$ & $e_{w,\max}(h,k)$ & $\tilde{R}_t$ & $\tilde{R}_x$ \\
\midrule
16   & 4  & 8.147e-02 & --    & --    & 5.291e-02 & --    & --    \\ 
64   & 8  & 2.468e-02 & 0.861 & 1.723 & 1.761e-02 & 0.794 & 1.587 \\ 
256  & 16 & 6.984e-03 & 0.911 & 1.821 & 5.115e-03 & 0.892 & 1.784 \\ 
1024 & 32 & 1.820e-03 & 0.970 & 1.940 & 1.442e-03 & 0.913 & 1.827 \\ 
4096 & 64 & 4.697e-04 & 0.977 & 1.954 & 3.872e-04 & 0.948 & 1.897 \\ 
\bottomrule
\end{tabular}
\end{table}

\begin{table}[H]
\centering
\caption{Strong approximation errors and convergence rates for $q=0.5$ ($\beta=1.5$). The theoretical optimal rates are $R_t = 0.75$ and $R_x = 1.5$. Parameters: $N_{\text{sim}}=250$, $N_{x,\text{ref}}=256$, $N_{t,\text{ref}}=65536$.}
\label{tab:smooth_convergence_q0.5}
\begin{tabular}{@{} l l c c c c c c @{}}
\toprule
\multirow{2}{*}{$N_t$} & \multirow{2}{*}{$N_x$} & \multicolumn{3}{c}{Standard Error Metric} & \multicolumn{3}{c}{Time-Weighted Metric ($t_n^1$)} \\
\cmidrule(lr){3-5} \cmidrule(l){6-8}
& & $e_{\max}(h,k)$ & $R_t$ & $R_x$ & $e_{w,\max}(h,k)$ & $\tilde{R}_t$ & $\tilde{R}_x$ \\
\midrule
16   & 4  & 8.754e-02 & --    & --    & 6.613e-02 & --    & --    \\ 
64   & 8  & 3.021e-02 & 0.767 & 1.535 & 2.444e-02 & 0.718 & 1.436 \\ 
256  & 16 & 1.007e-02 & 0.793 & 1.586 & 9.016e-03 & 0.719 & 1.439 \\ 
1024 & 32 & 3.391e-03 & 0.785 & 1.569 & 3.178e-03 & 0.752 & 1.504 \\ 
4096 & 64 & 1.139e-03 & 0.787 & 1.574 & 1.102e-03 & 0.764 & 1.527 \\ 
\bottomrule
\end{tabular}
\end{table}

\begin{table}[H]
\centering
\caption{Strong approximation errors and convergence rates for $q=0$ ($\beta=1$). The theoretical optimal rates are $R_t = 0.5$ and $R_x = 1$. Parameters: $N_{\text{sim}}=250$, $N_{x,\text{ref}}=256$, $N_{t,\text{ref}}=65536$.}
\label{tab:smooth_convergence_q0}
\begin{tabular}{@{} l l c c c c c c @{}}
\toprule
\multirow{2}{*}{$N_t$} & \multirow{2}{*}{$N_x$} & \multicolumn{3}{c}{Standard Error Metric} & \multicolumn{3}{c}{Time-Weighted Metric ($t_n^1$)} \\
\cmidrule(lr){3-5} \cmidrule(l){6-8}
& & $e_{\max}(h,k)$ & $R_t$ & $R_x$ & $e_{w,\max}(h,k)$ & $\tilde{R}_t$ & $\tilde{R}_x$ \\
\midrule
16   & 4  & 1.041e-01 & --    & --    & 8.404e-02 & --    & --    \\ 
64   & 8  & 4.797e-02 & 0.559 & 1.117 & 4.449e-02 & 0.459 & 0.918 \\ 
256  & 16 & 2.284e-02 & 0.535 & 1.071 & 2.213e-02 & 0.504 & 1.008 \\ 
1024 & 32 & 1.107e-02 & 0.523 & 1.045 & 1.084e-02 & 0.515 & 1.029 \\ 
4096 & 64 & 5.252e-03 & 0.538 & 1.075 & 5.196e-03 & 0.530 & 1.061 \\ 
\bottomrule
\end{tabular}
\end{table}

\begin{table}[H]
\centering
\caption{Strong approximation errors and convergence rates for $q=-0.5$ ($\beta\approx0.5$). The theoretical optimal rates are $R_t = 0.25$ and $R_x = 0.5$. Parameters: $N_{\text{sim}}=250$, $N_{x,\text{ref}}=256$, $N_{t,\text{ref}}=65536$.}
\label{tab:smooth_convergence_q-0.5}
\begin{tabular}{@{} l l c c c c c c @{}}
\toprule
\multirow{2}{*}{$N_t$} & \multirow{2}{*}{$N_x$} & \multicolumn{3}{c}{Standard Error Metric} & \multicolumn{3}{c}{Time-Weighted Metric ($t_n^1$)} \\
\cmidrule(lr){3-5} \cmidrule(l){6-8}
& & $e_{\max}(h,k)$ & $R_t$ & $R_x$ & $e_{w,\max}(h,k)$ & $\tilde{R}_t$ & $\tilde{R}_x$ \\
\midrule
16   & 4  & 1.625e-01 & --    & --    & 1.516e-01 & --    & --    \\ 
64   & 8  & 1.091e-01 & 0.287 & 0.575 & 1.078e-01 & 0.246 & 0.492 \\ 
256  & 16 & 7.713e-02 & 0.250 & 0.500 & 7.601e-02 & 0.252 & 0.505 \\ 
1024 & 32 & 5.266e-02 & 0.275 & 0.551 & 5.166e-02 & 0.278 & 0.557 \\ 
4096 & 64 & 3.450e-02 & 0.305 & 0.610 & 3.430e-02 & 0.296 & 0.591 \\ 
\bottomrule
\end{tabular}
\end{table}

%%%%%%%%%%%%%%%%%%%%%%%%%%%%%%%%%%
\subsection{Nonsmooth initial data}\label{subsec:nonsmooth_data_examples}
%%%%%%%%%%%%%%%%%%%%%%%%%%%%%%%%%%
\begin{example}
    We consider the following one-dimensional semilinear stochastic heat equation with additive noise and discontinuous initial data
\begin{equation}\label{eq:num_spde_step_function}
\begin{cases}
    \frac{\partial u}{\partial t}(t,x) = \frac{\partial^2 u}{\partial x^2}(t,x) + \sin(u(t,x)) + \dot{W}(t,x), & t \in (0, 1], \; x \in (0, 1), \\
    u(0, x) = \mathbf{1}_{(0, 0.5)}(x), & x \in (0, 1), \\
    u(t, 0) = u(t, 1) = 0, & t \in (0, 1].
\end{cases}
\end{equation}
\end{example}

Because the initial condition $u_0$ contains a jump discontinuity at $x=0.5$, the standard strong error suffers from a transient singularity near $t=0$. Tables~\ref{tab:nonsmooth_step_convergence_q1}--\ref{tab:nonsmooth_step_convergence_q-0.5} show that this lack of initial regularity degrades the standard convergence rates ($R_t, R_x$), preventing them from reaching the optimal bounds dictated by the noise regularity $\beta$. However, by applying the time weight $t_n^1$, we neutralize the initial singularity. Across all spatial regularities, the time-weighted convergence rates ($\tilde{R}_t, \tilde{R}_x$) recover the theoretical optimal orders of $\mathcal{O}(h^\beta + k^{\beta/2})$.

\begin{table}[H]
\centering
\caption{Strong approximation errors and convergence rates for $q=1$ ($\beta=2$). The theoretical optimal rates are $R_t = 1$ and $R_x = 2$. Parameters: $N_{\text{sim}}=250$, $N_{x,\text{ref}}=256$, $N_{t,\text{ref}}=65536$.}
\label{tab:nonsmooth_step_convergence_q1}
\begin{tabular}{@{} l l c c c c c c @{}}
\toprule
\multirow{2}{*}{$N_t$} & \multirow{2}{*}{$N_x$} & \multicolumn{3}{c}{Standard Error Metric} & \multicolumn{3}{c}{Time-Weighted Metric ($t_n^1$)} \\
\cmidrule(lr){3-5} \cmidrule(l){6-8}
& & $e_{\max}(h,k)$ & $R_t$ & $R_x$ & $e_{w,\max}(h,k)$ & $\tilde{R}_t$ & $\tilde{R}_x$ \\
\midrule
16   & 4  & 1.220e-01 & --    & --    & 5.423e-02 & --    & --    \\ 
64   & 8  & 9.496e-02 & 0.181 & 0.362 & 1.744e-02 & 0.818 & 1.637 \\ 
256  & 16 & 6.126e-02 & 0.316 & 0.632 & 5.219e-03 & 0.870 & 1.740 \\ 
1024 & 32 & 4.039e-02 & 0.300 & 0.601 & 1.485e-03 & 0.907 & 1.813 \\ 
4096 & 64 & 2.506e-02 & 0.344 & 0.689 & 3.933e-04 & 0.958 & 1.917 \\ 
\bottomrule
\end{tabular}
\end{table}

\begin{table}[H]
\centering
\caption{Strong approximation errors and convergence rates for $q=0.5$ ($\beta=1.5$). The theoretical optimal rates are $R_t = 0.75$ and $R_x = 1.5$. Parameters: $N_{\text{sim}}=250$, $N_{x,\text{ref}}=256$, $N_{t,\text{ref}}=65536$.}
\label{tab:nonsmooth_step_convergence_q0.5}
\begin{tabular}{@{} l l c c c c c c @{}}
\toprule
\multirow{2}{*}{$N_t$} & \multirow{2}{*}{$N_x$} & \multicolumn{3}{c}{Standard Error Metric} & \multicolumn{3}{c}{Time-Weighted Metric ($t_n^1$)} \\
\cmidrule(lr){3-5} \cmidrule(l){6-8}
& & $e_{\max}(h,k)$ & $R_t$ & $R_x$ & $e_{w,\max}(h,k)$ & $\tilde{R}_t$ & $\tilde{R}_x$ \\
\midrule
16   & 4  & 1.247e-01 & --    & --    & 6.517e-02 & --    & --    \\ 
64   & 8  & 9.577e-02 & 0.190 & 0.380 & 2.423e-02 & 0.714 & 1.428 \\ 
256  & 16 & 6.152e-02 & 0.319 & 0.639 & 8.907e-03 & 0.722 & 1.444 \\ 
1024 & 32 & 4.043e-02 & 0.303 & 0.606 & 3.157e-03 & 0.748 & 1.496 \\ 
4096 & 64 & 2.509e-02 & 0.344 & 0.688 & 1.098e-03 & 0.762 & 1.524 \\ 
\bottomrule
\end{tabular}
\end{table}

\begin{table}[H]
\centering
\caption{Strong approximation errors and convergence rates for $q=0$ ($\beta=1$). The theoretical optimal rates are $R_t = 0.5$ and $R_x = 1$. Parameters: $N_{\text{sim}}=250$, $N_{x,\text{ref}}=256$, $N_{t,\text{ref}}=65536$.}
\label{tab:nonsmooth_step_convergence_q0}
\begin{tabular}{@{} l l c c c c c c @{}}
\toprule
\multirow{2}{*}{$N_t$} & \multirow{2}{*}{$N_x$} & \multicolumn{3}{c}{Standard Error Metric} & \multicolumn{3}{c}{Time-Weighted Metric ($t_n^1$)} \\
\cmidrule(lr){3-5} \cmidrule(l){6-8}
& & $e_{\max}(h,k)$ & $R_t$ & $R_x$ & $e_{w,\max}(h,k)$ & $\tilde{R}_t$ & $\tilde{R}_x$ \\
\midrule
16   & 4  & 1.381e-01 & --    & --    & 8.479e-02 & --    & --    \\ 
64   & 8  & 1.010e-01 & 0.226 & 0.452 & 4.264e-02 & 0.496 & 0.992 \\ 
256  & 16 & 6.425e-02 & 0.326 & 0.652 & 2.179e-02 & 0.484 & 0.968 \\ 
1024 & 32 & 4.156e-02 & 0.314 & 0.629 & 1.089e-02 & 0.500 & 1.000 \\ 
4096 & 64 & 2.548e-02 & 0.353 & 0.706 & 5.211e-03 & 0.532 & 1.064 \\ 
\bottomrule
\end{tabular}
\end{table}

\begin{table}[H]
\centering
\caption{Strong approximation errors and convergence rates for $q=-0.5$ ($\beta\approx0.5$). The theoretical optimal rates are $R_t = 0.25$ and $R_x = 0.5$. Parameters: $N_{\text{sim}}=250$, $N_{x,\text{ref}}=256$, $N_{t,\text{ref}}=65536$.}
\label{tab:nonsmooth_step_convergence_q-0.5}
\begin{tabular}{@{} l l c c c c c c @{}}
\toprule
\multirow{2}{*}{$N_t$} & \multirow{2}{*}{$N_x$} & \multicolumn{3}{c}{Standard Error Metric} & \multicolumn{3}{c}{Time-Weighted Metric ($t_n^1$)} \\
\cmidrule(lr){3-5} \cmidrule(l){6-8}
& & $e_{\max}(h,k)$ & $R_t$ & $R_x$ & $e_{w,\max}(h,k)$ & $\tilde{R}_t$ & $\tilde{R}_x$ \\
\midrule
16   & 4  & 1.793e-01 & --    & --    & 1.453e-01 & --    & --    \\ 
64   & 8  & 1.387e-01 & 0.185 & 0.371 & 1.065e-01 & 0.224 & 0.449 \\ 
256  & 16 & 9.535e-02 & 0.270 & 0.540 & 7.538e-02 & 0.249 & 0.498 \\ 
1024 & 32 & 6.428e-02 & 0.284 & 0.569 & 5.180e-02 & 0.271 & 0.541 \\ 
4096 & 64 & 4.142e-02 & 0.317 & 0.634 & 3.398e-02 & 0.304 & 0.609 \\ 
\bottomrule
\end{tabular}
\end{table}

\begin{example}
We consider the following SPDE
\begin{equation}\label{eq:spde_composite_rough}
\begin{cases} 
    \frac{\partial u}{\partial t}(t,x) = \frac{\partial^2 u}{\partial x^2}(t,x) + \sin(u(t,x)) + \dot{W}(t,x), & t \in (0, T], \; x \in (0, 1), \\
    u(0, x) = \sin\bigl(\pi x^{1/2}\bigr), & x \in [0, 1], \\
    u(t, 0) = u(t, 1) = 0, & t \in (0, T].
\end{cases}
\end{equation}
\end{example}

% While the initial data $u_0(x) = \sin(\pi x^{1/2})$ is continuous and satisfies the boundary condition at $x=0$, its spatial derivative behaves like $x^{-1/2}$, which blows up at the boundary. Thus, $u_0 \notin H^1(0,1)$, making the initial state strictly nonsmooth. 

Tables~\ref{tab:nonsmooth_sqrt_convergence_q1}--\ref{tab:nonsmooth_sqrt_convergence_q-0.5} confirm that the lack of initial data regularity causes the standard convergence rates to drop below the optimal bounds. Multiplying the error by the discrete time weight $t_n^1$ compensates for the initial blow-up, allowing the fully discrete scheme to recover the optimal rates of $O(h^\beta + k^{\beta/2})$.

\begin{table}[H]
\centering
\caption{Strong approximation errors and convergence rates for $q=1$ ($\beta=2$). The theoretical optimal rates are $R_t = 1$ and $R_x = 2$. Parameters: $N_{\text{sim}}=250$, $N_{x,\text{ref}}=256$, $N_{t,\text{ref}}=65536$.}
\label{tab:nonsmooth_sqrt_convergence_q1}
\begin{tabular}{@{} l l c c c c c c @{}}
\toprule
\multirow{2}{*}{$N_t$} & \multirow{2}{*}{$N_x$} & \multicolumn{3}{c}{Standard Error Metric} & \multicolumn{3}{c}{Time-Weighted Metric ($t_n^1$)} \\
\cmidrule(lr){3-5} \cmidrule(l){6-8}
& & $e_{\max}(h,k)$ & $R_t$ & $R_x$ & $e_{w,\max}(h,k)$ & $\tilde{R}_t$ & $\tilde{R}_x$ \\
\midrule
16   & 4  & 7.342e-02 & --    & --    & 5.412e-02 & --    & --    \\ 
64   & 8  & 2.662e-02 & 0.732 & 1.463 & 1.781e-02 & 0.802 & 1.603 \\ 
256  & 16 & 1.059e-02 & 0.665 & 1.330 & 5.292e-03 & 0.876 & 1.751 \\ 
1024 & 32 & 4.725e-03 & 0.582 & 1.165 & 1.494e-03 & 0.912 & 1.824 \\ 
4096 & 64 & 2.095e-03 & 0.587 & 1.174 & 3.959e-04 & 0.958 & 1.916 \\ 
\bottomrule
\end{tabular}
\end{table}

\begin{table}[H]
\centering
\caption{Strong approximation errors and convergence rates for $q=0.5$ ($\beta=1.5$). The theoretical optimal rates are $R_t = 0.75$ and $R_x = 1.5$. Parameters: $N_{\text{sim}}=250$, $N_{x,\text{ref}}=256$, $N_{t,\text{ref}}=65536$.}
\label{tab:nonsmooth_sqrt_convergence_q0.5}
\begin{tabular}{@{} l l c c c c c c @{}}
\toprule
\multirow{2}{*}{$N_t$} & \multirow{2}{*}{$N_x$} & \multicolumn{3}{c}{Standard Error Metric} & \multicolumn{3}{c}{Time-Weighted Metric ($t_n^1$)} \\
\cmidrule(lr){3-5} \cmidrule(l){6-8}
& & $e_{\max}(h,k)$ & $R_t$ & $R_x$ & $e_{w,\max}(h,k)$ & $\tilde{R}_t$ & $\tilde{R}_x$ \\
\midrule
16   & 4  & 8.131e-02 & --    & --    & 6.384e-02 & --    & --    \\ 
64   & 8  & 3.026e-02 & 0.713 & 1.426 & 2.481e-02 & 0.682 & 1.363 \\ 
256  & 16 & 1.259e-02 & 0.633 & 1.265 & 8.922e-03 & 0.738 & 1.476 \\ 
1024 & 32 & 5.295e-03 & 0.625 & 1.250 & 3.193e-03 & 0.741 & 1.482 \\ 
4096 & 64 & 2.259e-03 & 0.614 & 1.229 & 1.099e-03 & 0.769 & 1.539 \\ 
\bottomrule
\end{tabular}
\end{table}

\begin{table}[H]
\centering
\caption{Strong approximation errors and convergence rates for $q=0$ ($\beta=1$). The theoretical optimal rates are $R_t = 0.5$ and $R_x = 1$. Parameters: $N_{\text{sim}}=250$, $N_{x,\text{ref}}=256$, $N_{t,\text{ref}}=65536$.}
\label{tab:nonsmooth_sqrt_convergence_q0}
\begin{tabular}{@{} l l c c c c c c @{}}
\toprule
\multirow{2}{*}{$N_t$} & \multirow{2}{*}{$N_x$} & \multicolumn{3}{c}{Standard Error Metric} & \multicolumn{3}{c}{Time-Weighted Metric ($t_n^1$)} \\
\cmidrule(lr){3-5} \cmidrule(l){6-8}
& & $e_{\max}(h,k)$ & $R_t$ & $R_x$ & $e_{w,\max}(h,k)$ & $\tilde{R}_t$ & $\tilde{R}_x$ \\
\midrule
16   & 4  & 9.740e-02 & --    & --    & 8.427e-02 & --    & --    \\ 
64   & 8  & 4.814e-02 & 0.508 & 1.017 & 4.347e-02 & 0.477 & 0.955 \\ 
256  & 16 & 2.306e-02 & 0.531 & 1.062 & 2.184e-02 & 0.497 & 0.993 \\ 
1024 & 32 & 1.130e-02 & 0.515 & 1.030 & 1.086e-02 & 0.504 & 1.009 \\ 
4096 & 64 & 5.295e-03 & 0.547 & 1.093 & 5.170e-03 & 0.535 & 1.070 \\ 
\bottomrule
\end{tabular}
\end{table}

\begin{table}[H]
\centering
\caption{Strong approximation errors and convergence rates for $q=-0.5$ ($\beta\approx0.5$). The theoretical optimal rates are $R_t = 0.25$ and $R_x = 0.5$. Parameters: $N_{\text{sim}}=250$, $N_{x,\text{ref}}=256$, $N_{t,\text{ref}}=65536$.}
\label{tab:nonsmooth_sqrt_convergence_q-0.5}
\begin{tabular}{@{} l l c c c c c c @{}}
\toprule
\multirow{2}{*}{$N_t$} & \multirow{2}{*}{$N_x$} & \multicolumn{3}{c}{Standard Error Metric} & \multicolumn{3}{c}{Time-Weighted Metric ($t_n^1$)} \\
\cmidrule(lr){3-5} \cmidrule(l){6-8}
& & $e_{\max}(h,k)$ & $R_t$ & $R_x$ & $e_{w,\max}(h,k)$ & $\tilde{R}_t$ & $\tilde{R}_x$ \\
\midrule
16   & 4  & 1.556e-01 & --    & --    & 1.468e-01 & --    & --    \\ 
64   & 8  & 1.092e-01 & 0.255 & 0.511 & 1.070e-01 & 0.228 & 0.456 \\ 
256  & 16 & 7.699e-02 & 0.252 & 0.505 & 7.550e-02 & 0.251 & 0.503 \\ 
1024 & 32 & 5.271e-02 & 0.273 & 0.547 & 5.197e-02 & 0.269 & 0.539 \\ 
4096 & 64 & 3.436e-02 & 0.309 & 0.617 & 3.405e-02 & 0.305 & 0.610 \\ 
\bottomrule
\end{tabular}
\end{table}

% ==========================================
% ACKNOWLEDGMENTS SECTION
% ==========================================
\section*{Acknowledgments}
\addcontentsline{toc}{section}{Acknowledgments}
The authors acknowledge the support provided by the Indian Institute of Technology Goa, India.

% ==========================================
% REFERENCES SECTION
% ==========================================
% Add the References section to the Table of Contents
\addcontentsline{toc}{section}{References}

%%%%%%% 1 To get just number in the citation
% \bibliographystyle{plain}
\bibliographystyle{abbrv} %% Shortens first names, journal names, and months.
%%%%%%% 2 To get name and year in the citation
% \bibliographystyle{plainnat} 
\bibliography{references.bib}

\appendix

\end{document}